\numberwithin{equation}{section}
\newtheorem{Theorem}{Theorem}[section]
\newtheorem{Corollary}[Theorem]{Corollary}
\newtheorem{Lemma}[Theorem]{Lemma}
\newtheorem{Proposition}[Theorem]{Proposition}
 { \theoremstyle{definition}
\newtheorem{Definition}[Theorem]{Definition}
\newtheorem{Case}{Case}
\newtheorem{Example}[Theorem]{Example}
\newtheorem{Remark}[Theorem]{Remark} }
\begin{document}
\allowdisplaybreaks

\newcommand{\arXivNumber}{1804.09158}

\renewcommand{\thefootnote}{}

\renewcommand{\PaperNumber}{115}

\FirstPageHeading

\ArticleName{The Smallest Singular Values\\ and Vector-Valued Jack Polynomials\footnote{This paper is a~contribution to the Special Issue on the Representation Theory of the Symmetric Groups and Related Topics. The full collection is available at \href{https://www.emis.de/journals/SIGMA/symmetric-groups-2018.html}{https://www.emis.de/journals/SIGMA/symmetric-groups-2018.html}}}
\ShortArticleName{The Smallest Singular Values and Vector-Valued Jack Polynomials}

\Author{Charles F.~DUNKL}

\AuthorNameForHeading{C.F.~Dunkl}

\Address{Department of Mathematics, University of Virginia,\\ PO Box 400137, Charlottesville VA 22904-4137, USA}
\Email{\href{mailto:cfd5z@virginia.edu}{cfd5z@virginia.edu}}
\URLaddress{\url{http://people.virginia.edu/~cfd5z/}}

\ArticleDates{Received June 15, 2018, in final form October 22, 2018; Published online October 25, 2018}

\Abstract{There is a space of vector-valued nonsymmetric Jack polynomials associated with any irreducible representation of a symmetric group. Singular polynomials for the smallest singular values are constructed in terms of the Jack polynomials. The smallest singular values bound the region of positivity of the bilinear symmetric form for which the Jack polynomials are mutually orthogonal. As background there are some results about general finite reflection groups and singular values in the context of standard modules of the rational Cherednik algebra.}

\Keywords{nonsymmetric Jack polynomials; standard modules; Young tableaux}

\Classification{33C52; 20F55; 05E35; 05E10}

\renewcommand{\thefootnote}{\arabic{footnote}}
\setcounter{footnote}{0}

\section{Introduction}

Suppose $W$ is the finite reflection group generated by the reflections in the reduced root system~$R$. This means~$R$ is a finite set of nonzero vectors in $\mathbb{R}^{N}$ such that \thinspace$u,v\in R$ implies \smash{$\mathbb{R}u\cap R= \{ \pm u \} $} and $v\sigma_{u}\in R$ where $\sigma_{u}$ is the reflection $x\mapsto x-2\frac{\langle x,v\rangle}{\langle v,v\rangle}v$ and $\langle\cdot,\cdot\rangle$ is the standard inner product. This implies $ \langle x\sigma_{v},y\sigma_{v} \rangle = \langle x,y\rangle $ for all $x,y\in\mathbb{R}^{N}$ and the group $W$ generated
by $ \{ \sigma_{v}\colon v\in R \} $ is a~finite group of orthogonal transformations of $\mathbb{R}^{N}$. For a fixed vector $b_{0}$ such that $\langle u,b_{0}\rangle\neq0$ for all $u\in R$ there is the decomposition $R=R_{+}\cup R_{-}$ with $R_{+}:= \{ u\in R\colon \langle u,b_{0}
\rangle>0 \} $. The set~$R_{+}$ serves as index set for the reflections in~$W$. In Section~\ref{ExtP} it is assumed that $\operatorname{span}_{\mathbb{R} }(R) =\mathbb{R}^{N}$, while the other sections concerning the symmetric group use the root system~$A_{N-1}$ whose span is \mbox{$\Big\{ x\in\mathbb{R}^{N}\colon \sum\limits_{i=1}^{N}x_{i}=0\Big\} $}. The group $W$ is represented on the space~$\mathcal{P}$ of polynomials in $x= ( x_{1},\ldots,x_{N} ) $ by $wp(x) =p ( xw ) $ for $w\in W$. Denote $\mathbb{N}_{0}:= \{ 0,1,2,\ldots \} $ and for $\alpha\in\mathbb{N}_{0}^{N}$ let $\vert \alpha\vert :=\sum\limits_{i=1}^{N}\alpha_{i}$ and $x^{\alpha}:= \prod\limits_{i=1}^{N}x_{i}^{\alpha_{i}}$, a monomial. Then $\mathcal{P}:=\operatorname{span}\big\{
x^{\alpha}\colon\alpha\in\mathbb{N}_{0}^{N}\big\} $ and \mbox{$\mathcal{P}_{n}:=\operatorname{span}\big\{ x^{\alpha}\colon \alpha
\in\mathbb{N}_{0}^{N}, \,\vert \alpha \vert =n\big\} $} the space of polynomials homogeneous of degree~$n$. Let~$\kappa$ be a parameter (called
multiplicity function), a function on $R$ constant on $W$-orbits. For indecomposable groups $W$ there are at most two orbits in $R$ (two for types $B_{N}$, $F_{4}$ and $I(2k) $, otherwise one for type $A_{N}$, $D_{N}$, $E_{m}$, $I(2k+1)$) then the Dunkl operators $\{\mathcal{D}_{i}\colon 1\leq i\leq N\} $ are defined by
\begin{gather*}
\mathcal{D}_{i}p(x) =\frac{\partial p(x) }{\partial x_{i}}+\sum_{v\in R_{+}}\kappa(v) \frac{p(x) -p( x\sigma_{v}) }{\langle x,v\rangle}v_{i}.
\end{gather*}
Then $\mathcal{D}_{i}\mathcal{D}_{j}=\mathcal{D}_{j}\mathcal{D}_{i}$ for $1\leq i,j\leq N$, $\mathcal{D}_{i}$ maps $\mathcal{P}_{n}$ to $\mathcal{P}_{n-1}$, and the Laplacian $\Delta_{\kappa}:=\sum\limits_{i=1}^{N}\mathcal{D}_{i}^{2}$ satisfies
\begin{gather*}
\Delta_{\kappa}f(x) =\Delta f(x) +\sum_{v\in R_{+}}\kappa(v) \left( \frac{2\langle\nabla f(x) ,v\rangle}{\langle x,v\rangle}- \vert v \vert ^{2}\frac{f (x) -f( x\sigma_{v}) }{\langle x,v\rangle^{2}}\right).
\end{gather*}
The abstract algebra generated by $W$, $\mathcal{D}_{i}$, and multiplication by $x_{i}$, $1\leq i\leq N$, acting on~$\mathcal{P}$ is the rational Cherednik algebra. There are two $W$-invariant bilinear symmetric forms of interest here, denoted $\langle\cdot,\cdot\rangle_{\kappa}$ and $\langle\cdot ,\cdot\rangle_{\kappa,G}$. The first one (called the \textit{contravariant} form) satisfies $\langle\mathcal{D}_{i}f,g\rangle_{\kappa}= \langle f,x_{i}g\rangle_{\kappa}$ for all~$i$ and $f,g\in\mathcal{P}$ and $\langle f,g\rangle_{\kappa}=0$ if $f,g$ are homogeneous of different degrees; also $\langle 1,1\rangle _{\kappa}=1,\langle wf,wg\rangle_{\kappa }=\langle f,g\rangle_{\kappa}$ for $w\in W$. The Gaussian form is derived from the first one by $\langle f,g\rangle_{\kappa,G}:=\big\langle e^{\Delta_{\kappa}/2}f,e^{\Delta_{\kappa}/2}g\big\rangle_{\kappa}$. This form satisfies $\langle\mathcal{D}_{i}f,g\rangle_{\kappa,G}=\langle f,( x_{i}-\mathcal{D}_{i}) g\rangle_{\kappa,G}$ for all~$i$, and thus multiplication by $x_{i}$ is self-adjoint because $\langle f,x_{i}g\rangle_{\kappa,G}=\langle\mathcal{D}_{i}f,g\rangle_{\kappa,G}+\langle f,\mathcal{D}_{i}g\rangle_{\kappa,G}$. For certain constant values of $\kappa$ the Gaussian form is realized as an integral with respect to a~finite positive measure on $\mathbb{R}^{N}$, in fact
\begin{gather*}
\langle f,g\rangle_{\kappa,G}=c_{\kappa}\int_{\mathbb{R}^{N}}f(x) g(x) \prod_{v\in R_{+}}\vert \langle x,v\rangle \vert ^{2\kappa(v) }e^{-\vert x\vert^{2}/2}{\rm d}m_{N}(x),
\end{gather*}
where $m_{N}$ is Lebesgue measure on $\mathbb{R}^{N}$ (see \cite[Theorem~3.10]{Dunkl1991}). The constant $c_{\kappa}$ is a normalizing constant to match $\langle1,1\rangle_{\kappa,G}=1$. The explanation of the value of~$c_{\kappa}$ is in terms of the \textit{fundamental} degrees of~$W$. By a~theorem of Chevalley the ring of $W$-invariant polynomials is generated by~$N$ algebraically independent homogeneous polynomials of degrees $d_{1}\leq d_{2}\leq\cdots\leq d_{N}$ (generally `$<$' holds), and these are the fundamental degrees (see \cite[Section~3.5]{Humphreys1990}). They satisfy $\prod\limits_{i=1}^{N}d_{i}=\#W$ and $\sum\limits_{i=1}^{N}( d_{i}-1) =\#R_{+}$. The Macdonald--Mehta--Selberg integral formula is
\begin{gather*}
\int_{\mathbb{R}^{N}}\prod_{v\in R_{+}} \vert \langle x,v\rangle \vert ^{2\kappa}e^{-\vert x\vert ^{2}/2}{\rm d}m_{N}(x) =c\prod_{i=1}^{N}\frac{\Gamma( 1+d_{i}\kappa ) }{\Gamma ( 1+\kappa) } ,
\end{gather*}
where $c$ is independent of $\kappa$. There is a version of this for the $B_{N}$ and $F_{4}$ types. Etingof \cite[Theorem~3.1]{Etingof2010} gave a~proof of the formula valid for all finite reflection groups. The integral shows that the measure is finite and positive for $\kappa>-\frac{1}{d_{N}}$. \textit{Henceforth we consider only the one-parameter situation with~$W$ having just one conjugacy class of reflections.}

This number $-\frac{1}{d_{N}}$ appears in another context. Suppose for some specific rational value of~$\kappa$ there exists a nonconstant polynomial~$p$ for which $\mathcal{D}_{i}p=0$ for $1\leq i\leq N$, then $p$ is called a~\textit{singular polynomial} and $\kappa$ is a~\textit{singular value}. We can assume that $p$ is homogeneous. In this case $\langle x^{\alpha},p(x) \rangle_{\kappa}=\Big\langle1,\prod\limits_{i=1}^{N}\mathcal{D}_{i}^{\alpha _{i}}p(x) \Big\rangle_{\kappa}=0$ for all $\alpha\in\mathbb{N}
_{0}^{N}$ with $\alpha\neq ( 0,\ldots,0 ) $ and thus $\langle f,p\rangle_{\kappa}=0$ for all~$f\in\mathcal{P}$. Furthermore $\Delta_{\kappa
}p=0$ implying $e^{\Delta_{\kappa}/2}p=p$ and $\langle p,p\rangle_{\kappa,G}=0$. It follows that $\kappa\leq-\frac{1}{d_{N}}$ (taking $\kappa$
constant). In fact the smallest (in absolute value) singular value is indeed~$-\frac{1}{d_{N}}$ \cite[Theorem~4.9]{Dunkl/Jeu/Opdam1994}. The theory can be extended to polynomials taking values in modules of~$W$. Suppose $\tau$ is an irreducible orthogonal representation of $W$ on a (finite-dimensional) real vector space $V$ with basis $ \{ u_{i}\colon 1\leq i\leq\dim V \} $. The space $\mathcal{P}_{\tau}:=\mathcal{P}\otimes V$ has the basis $ \big\{ x^{\alpha}\otimes u_{i}\colon \alpha\in\mathbb{N}_{0}^{N},\,1\leq i\leq\dim V \big\}$. There is a representation of~$W$ on~$\mathcal{P}_{\tau}$ defined to be the linear extension of
\begin{gather*}
w\mapsto w(p(x) \otimes u) :=p(xw)\otimes( \tau(w) u) ,\qquad p\in\mathcal{P},\qquad u\in V.
\end{gather*}
The associated Dunkl operators are the linear extensions of
\begin{gather*}
\mathcal{D}_{i}(p(x) \otimes u) =\frac{\partial p(x) }{\partial x_{i}}\otimes u+\kappa\sum_{v\in R_{+}} \frac{p(x) -p ( x\sigma_{v} ) }{\langle x,v\rangle }v_{i}\otimes ( \tau ( \sigma_{v} ) u ) .
\end{gather*}
The first bilinear form is a modification of the scalar one: let $ \langle \cdot,\cdot \rangle _{V}$ be a $W$-invariant inner product on~$V$, that is, $ \langle \tau(w) u_{1},\tau(w) u_{2}\rangle _{V}=\langle u_{1},u_{2}\rangle _{V}$ for all $u_{1},u_{2}\in V,w\in W$; this form is unique up to multiplication by a~constant. The symmetric form $\langle \cdot,\cdot\rangle _{\kappa}$ satisfies (i)~$\langle 1\otimes u_{1},1\otimes u_{2}\rangle_{\kappa}= \langle u_{1},u_{2} \rangle _{V}$ for $u_{1},u_{2}\in V$, (ii)~$\langle wf,wg\rangle _{\kappa}=\langle f,g\rangle_{\kappa}$ for $f,g\in\mathcal{P}_{\tau}$ and $w\in W$, (iii)~if $f,g\in\mathcal{P}_{\tau}$ are homogeneous of different degrees then
$\langle f,g\rangle _{\kappa}=0$, (iv) $\langle\mathcal{D}_{i}f,g\rangle_{\kappa}=\langle f,x_{i}g\rangle_{\kappa}$ for all $i$. As a~consequence suppose $\alpha\in\mathbb{N}_{0}^{N}$, $\vert \alpha\vert =n$, $u\in V$ and $f\in\mathcal{P}_{\tau}$ is homogeneous of degree~$n$, then $f_{0}:=\prod\limits_{i=1}^{N}\mathcal{D}_{i}^{\alpha_{i}}f ( x) \in V$ and $\langle x^{\alpha}\otimes u,f\rangle _{\kappa}=\langle u,f_{0} \rangle _{V}$. The Gaussian form is defined by $\langle f,g\rangle_{\kappa,G}:=\big\langle e^{\Delta_{\kappa} /2}f,e^{\Delta_{\kappa}/2}g\big\rangle_{\kappa}$ as in the scalar case, and the definition of singular polynomials is the same ($\mathcal{D}_{i}f=0$ for all~$i$, some specific value of~$\kappa$). The interesting question is for what $\kappa$ is the form $ \langle \cdot,\cdot \rangle _{\kappa}$ positive-definite; this property is equivalent to positivity of the Gaussian form. The property $\langle f,x_{i}g\rangle_{\kappa,G}=\langle x_{i} f,g\rangle_{\kappa,G}$ suggests that this form can be realized as an integral over~$\mathbb{R}^{N}$ with a positive matrix-valued measure. Shelley-Abrahamson~\cite{Shelley-Abrahamson2018} proved there is a small interval for $\kappa$ about zero for which this occurs. The interval is a~subset of the interval for which the form is positive. The containment may be proper but the question of equality is not settled as yet.

It is the purpose of this note to show that the positivity interval is bounded by the smallest singular values, to illustrate the theory by constructing singular polynomials for exterior powers of the reflection representation of any~$W$, and to construct vector-valued Jack polynomials which specialize to singular polynomials for the symmetric groups. In this situation the representation is determined by a~partition~$\tau$ of~$N$ and the smallest singular values are $\pm\frac{1}{h_{\tau}}$ where $h_{\tau}$ is the longest hook-length of the Ferrers diagram of~$\tau$ (see Etingof and Stoica \cite[Section~5]{Etingof/Stoica2009}). The isotype (that is, a partition of~$N$) of these singular polynomials is determined.

There are two ways of finding singular polynomials, either define them directly (as in~\cite{Etingof/Stoica2009}) or describe the nonsymmetric Jack polynomials which become singular when specialized to the appropriate parameter value. Feigin and Silantyev~\cite{Feigin/Silantyev2012} found explicit formulas for all singular polynomials which span a $W$-module isomorphic to the reflection representation of~$W$.

The presentation starts with the result on the positivity of the Gaussian form, then the definition and properties of $\mathcal{P}_{\tau}$, the exterior powers of the reflection representation, the nonsymmetric Jack polynomials, results about the action of $\mathcal{D}_{i}$ and the construction of the singular polynomials. The theory of vector-valued nonsymmetric Jack polynomials, originated by Griffeth~\cite{Griffeth2010}, allows detailed analyses of~$\mathcal{P}_{\tau}$. In fact he constructed these polynomials for any group $G(r,1,N)$, the group of $N\times N$ monomial matrices whose nonzero entries are $r^{\rm th}$ roots of unity. In \cite[Section~5]{Griffeth2018} he determined the unitarity locus associated to the contravariant forms associated to these polynomials. These are regions in the parameter space $\mathbb{R}^{r}$ and the highest-dimensional components can be shown to be the regions of positivity.

\section{Region of positivity of the Gaussian form}

Fix an irreducible representation $\tau$ of $W$. The form $\langle\cdot,\cdot\rangle_{\kappa}$ is normalized by $\langle1 \otimes u_{1} ,1 \otimes u_{2} \rangle_{\kappa} = \langle u_{1} ,u_{2} \rangle_{V}$ where $\langle
\cdot, \cdot\rangle_{V}$ is a $W$-invariant bilinear positive symmetric form on $V$ (it is unique up to a multiplicative constant).

\begin{Definition}Let $\Omega$ denote the region of $\kappa\in\mathbb{R}$ for which $\langle f,f\rangle_{\kappa}\geq0$ for all $f\in\mathcal{P}_{\tau}$.
\end{Definition}

The following is due to Shelley-Abrahamson~\cite{Shelley-Abrahamson2018}.

\begin{Theorem}The region $\Omega$ contains a neighborhood of $\kappa=0$.
\end{Theorem}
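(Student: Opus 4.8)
The plan is to reduce positivity of $\langle\cdot,\cdot\rangle_{\kappa}$ on the infinite‑dimensional space $\mathcal{P}_{\tau}$ to positive‑definiteness of the finite Gram matrices on the homogeneous pieces $\mathcal{P}_{n}\otimes V$, and then to control all of them simultaneously by keeping the singular values away from $\kappa=0$. Two preliminaries: at $\kappa=0$ one has $\mathcal{D}_{i}=\partial/\partial x_{i}$, so by the consequence of property~(iv) quoted in the introduction the Gram matrix of $\langle\cdot,\cdot\rangle_{0}$ on $\mathcal{P}_{n}\otimes V$ in the basis $\{x^{\alpha}\otimes u_{i}\}$ is block diagonal with blocks $\big(\prod_{k}\alpha_{k}!\big)\,\big[\langle u_{i},u_{j}\rangle_{V}\big]_{i,j}$, hence positive definite because $\langle\cdot,\cdot\rangle_{V}$ is; and, writing $\mathcal{D}_{i}=\partial/\partial x_{i}+\kappa A_{i}$ with $A_{i}$ a degree‑lowering operator independent of $\kappa$, the identity $\langle x^{\alpha}\otimes u_{i},f\rangle_{\kappa}=\langle u_{i},\prod_{k}\mathcal{D}_{k}^{\alpha_{k}}f\rangle_{V}$ shows that the Gram matrix $M_{n}(\kappa)$ of $\langle\cdot,\cdot\rangle_{\kappa}$ on $\mathcal{P}_{n}\otimes V$ is symmetric with entries polynomial in $\kappa$, so its eigenvalues depend continuously on $\kappa$.

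The heart of the argument is that singular polynomials can occur only for $|\kappa|$ bounded away from $0$. For this I would use the Cherednik--Euler operator $T:=\sum_{i=1}^{N}x_{i}\mathcal{D}_{i}$, which preserves each $\mathcal{P}_{n}\otimes V$. Since $\sum_{i}x_{i}v_{i}=\langle x,v\rangle$, one has $\sum_{i}x_{i}v_{i}\frac{p(x)-p(x\sigma_{v})}{\langle x,v\rangle}=p(x)-p(x\sigma_{v})$, and a short computation gives, on $\mathcal{P}_{n}\otimes V$,
\begin{gather*}
T=n\cdot\mathrm{id}+\kappa\Big(c_{\tau}\cdot\mathrm{id}-\sum_{v\in R_{+}}\sigma_{v}\Big),
\end{gather*}
where $\sum_{v\in R_{+}}\sigma_{v}$ acts through the $W$-module structure of $\mathcal{P}_{n}\otimes V$ and $c_{\tau}$ is the rational scalar by which the central element $\sum_{v\in R_{+}}\sigma_{v}$ of $\mathbb{C}W$ acts on the irreducible $\tau$. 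Being central, $\sum_{v\in R_{+}}\sigma_{v}$ acts on $\mathcal{P}_{n}\otimes V$ with eigenvalues in the finite set $\{c_{\sigma}\}$, one scalar per irreducible $W$-module $\sigma$, so the eigenvalues of $T$ on $\mathcal{P}_{n}\otimes V$ are the numbers $n+\kappa(c_{\tau}-c_{\sigma})$. If $0\neq f\in\mathcal{P}_{n}\otimes V$ is singular, then $Tf=\sum_{i}x_{i}\mathcal{D}_{i}f=0$, so $n=\kappa(c_{\sigma}-c_{\tau})$ for some $\sigma$ with $c_{\sigma}\neq c_{\tau}$; since $n\geq1$ this forces $|\kappa|\geq 1/C$ with $C:=\max_{\sigma}|c_{\sigma}-c_{\tau}|>0$ (positive because, e.g., the trivial and sign characters give distinct values of $c$). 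Hence there is no singular polynomial, and no singular value, for $0<|\kappa|<1/C$; and $\kappa=0$ is not a singular value either, as $\partial_{i}f=0$ for all $i$ forces $f$ constant.

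Finally I would assemble the pieces. Fix $\kappa$ with $|\kappa|<1/C$ and any $n$, and suppose $M_{n}(\kappa)$ were singular. Let $m_{0}\leq n$ be the least degree at which $\langle\cdot,\cdot\rangle_{\kappa}$ is degenerate on $\mathcal{P}_{m_{0}}\otimes V$ (so $m_{0}\geq1$, the degree‑$0$ form being $\langle\cdot,\cdot\rangle_{V}$), and pick $0\neq f$ in its radical. For every $g\in\mathcal{P}_{m_{0}-1}\otimes V$, property~(iv) and the orthogonality of distinct degrees give $\langle\mathcal{D}_{i}f,g\rangle_{\kappa}=\langle f,x_{i}g\rangle_{\kappa}=0$, so $\mathcal{D}_{i}f$ lies in the radical of $\langle\cdot,\cdot\rangle_{\kappa}$ on $\mathcal{P}_{m_{0}-1}\otimes V$; that radical is zero by minimality of $m_{0}$ (and by positive‑definiteness of $\langle\cdot,\cdot\rangle_{V}$ when $m_{0}=1$), so $\mathcal{D}_{i}f=0$ for all $i$, i.e., $f$ is a singular polynomial of degree $m_{0}\geq1$, contradicting the previous paragraph. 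Hence $M_{n}(\kappa)$ is nonsingular for every $n$ throughout the connected interval $(-1/C,1/C)$; since its eigenvalues are continuous, all positive at $\kappa=0$, and never vanish on this interval, they remain positive there. Therefore $\langle f,f\rangle_{\kappa}>0$ for every nonzero $f\in\mathcal{P}_{\tau}$ when $|\kappa|<1/C$, so $(-1/C,1/C)\subseteq\Omega$.

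I expect the identity for $T$ to be the decisive step. Treating each homogeneous degree in isolation yields only an interval of positivity that could a priori shrink to $\{0\}$, the danger being that $\langle\cdot,\cdot\rangle_{\kappa}$ turns indefinite in ever higher degree as $\kappa\to0$; the Euler‑operator identity rules this out by confining singular polynomials, hence the degeneracies (and sign changes) of the $M_{n}(\kappa)$, to the finitely many values $n/(c_{\sigma}-c_{\tau})$, which stay away from $0$.
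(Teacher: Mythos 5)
Your argument is correct, but note that the paper itself offers no proof of this statement: it is quoted as a theorem of Shelley-Abrahamson, whose result is in fact stronger -- he constructs a matrix-valued (Dunkl) weight function on $\mathbb{R}^{N}$ realizing the Gaussian form for $\kappa$ near $0$, and nonnegativity of $\langle\cdot,\cdot\rangle_{\kappa}$ is a consequence; the paper explicitly points to that integral realization as part of the content of the citation. Your route is a self-contained algebraic proof of the weaker statement actually asserted, and it runs on essentially the machinery the paper develops immediately \emph{after} this theorem for a different purpose: your Euler-operator identity $\sum_{i}x_{i}\mathcal{D}_{i}=n+\kappa\big(\varepsilon(\tau)-\sum_{v\in R_{+}}\sigma_{v}\big)$ on degree-$n$ polynomials and the resulting constraint $\kappa=n/(\varepsilon(\rho)-\varepsilon(\tau))$ are the paper's two Propositions in Section~2 (so singular values satisfy $\vert\kappa\vert\geq 1/\max_{\rho}\vert\varepsilon(\rho)-\varepsilon(\tau)\vert$, exactly your $1/C$), and your minimal-degree radical argument (degeneracy at degree $m_{0}$ with nondegeneracy below forces $\mathcal{D}_{i}f=0$ via $\langle\mathcal{D}_{i}f,g\rangle_{\kappa}=\langle f,x_{i}g\rangle_{\kappa}$) is the same mechanism as the paper's lemma that $z_{n}>z_{n+1}$ makes $z_{n+1}$ a singular value. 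Combined with positive-definiteness at $\kappa=0$ and continuity of the Gram matrices (entries polynomial in $\kappa$ by $\langle x^{\alpha}\otimes u,f\rangle_{\kappa}=\langle u,\mathcal{D}^{\alpha}f\rangle_{V}$), this correctly yields positive-definiteness on $(-1/C,1/C)$, hence the claimed neighborhood. What your approach buys is an elementary and quantitative bound on the neighborhood, in the same spirit as the paper's later analysis showing positivity up to the smallest singular values; what it does not give is the positive matrix measure realizing the Gaussian form, which is the real point of citing Shelley-Abrahamson. You do rely on two facts the paper also takes as given -- the existence for every real $\kappa$ of the contravariant form with the adjoint property (iv), and that the central element $\sum_{v\in R_{+}}\sigma_{v}$ acts semisimply with eigenvalues $\varepsilon(\rho)$, $\rho\in\widehat{W}$ -- and, strictly, the continuity step is cleanest phrased via the minimum eigenvalue (or the leading principal minors), but these are cosmetic points; given the paper's standing assumptions your proof is complete.
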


This result includes the existence of a matrix measure on $\mathbb{R}^{N}$ which realizes the Gaussian form.

\begin{Lemma}Suppose for some $\kappa\in\Omega$ there is a polynomial $f \in\mathcal{P}_{\tau}$ such that $f \neq0$ and $\langle f ,f \rangle_{\kappa} =0$ then the space $X_{f}:=\operatorname{span} \{ w f \colon w \in W \} $ can be decomposed as a sum of irreducible $W$-modules and $g \in X_{f}$ implies $\langle g ,p \rangle_{\kappa} =0$ for all $p \in \mathcal{P}_{\tau}$.
\end{Lemma}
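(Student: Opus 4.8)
The plan is to use that, for $\kappa\in\Omega$, the form $\langle\cdot,\cdot\rangle_{\kappa}$ is positive \emph{semi}definite on all of $\mathcal{P}_{\tau}$, so that the set of $f$ with $\langle f,f\rangle_{\kappa}=0$ is exactly the radical $\{h\in\mathcal{P}_{\tau}\colon\langle h,p\rangle_{\kappa}=0\ \text{for all}\ p\in\mathcal{P}_{\tau}\}$, a linear subspace, and then to observe that $W$-invariance of the form makes this radical a $W$-submodule.

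First I would carry out the standard Cauchy--Schwarz step for semidefinite forms: given $\langle f,f\rangle_{\kappa}=0$ and any $p\in\mathcal{P}_{\tau}$, the inequality
\begin{gather*}
0\le\langle f+tp,f+tp\rangle_{\kappa}=2t\langle f,p\rangle_{\kappa}+t^{2}\langle p,p\rangle_{\kappa}
\end{gather*}
must hold for every $t\in\mathbb{R}$, which forces $\langle f,p\rangle_{\kappa}=0$. No reduction to a homogeneous $f$ is needed, since $\kappa\in\Omega$ gives semidefiniteness on all of $\mathcal{P}_{\tau}$; incidentally, writing $f=\sum_{n}f_{n}$ in homogeneous components, each $f_{n}$ also lies in the radical because $\langle f,f\rangle_{\kappa}=\sum_{n}\langle f_{n},f_{n}\rangle_{\kappa}$ is a sum of nonnegative terms, so $X_{f}$ could even be taken homogeneous of a single degree.

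Next, for $w\in W$ and $p\in\mathcal{P}_{\tau}$, $W$-invariance gives $\langle wf,p\rangle_{\kappa}=\langle f,w^{-1}p\rangle_{\kappa}$, and $w^{-1}p\in\mathcal{P}_{\tau}$, so the previous step yields $\langle wf,p\rangle_{\kappa}=0$; by linearity $\langle g,p\rangle_{\kappa}=0$ for every $g\in X_{f}$ and every $p\in\mathcal{P}_{\tau}$, which is the orthogonality claim (in particular $\langle g,g\rangle_{\kappa}=0$ for all $g\in X_{f}$). Finally, $X_{f}=\operatorname{span}\{wf\colon w\in W\}$ is spanned by a finite set, hence finite-dimensional, and it is stable under the $W$-action on $\mathcal{P}_{\tau}$ by construction, so it is a finite-dimensional real $W$-module; since $W$ is finite and $\mathbb{R}$ has characteristic zero, Maschke's theorem decomposes $X_{f}$ into a direct sum of irreducible $W$-modules.

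There is no real obstacle here; the only point needing care is that the Cauchy--Schwarz argument genuinely uses semidefiniteness of the form on all of $\mathcal{P}_{\tau}$ (that is, $\kappa\in\Omega$), not merely the single equation $\langle f,f\rangle_{\kappa}=0$.
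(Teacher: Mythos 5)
Your argument is correct and uses the same ingredients as the paper's proof: positive semidefiniteness on $\mathcal{P}_{\tau}$ (from $\kappa\in\Omega$) to run Cauchy--Schwarz, $W$-invariance of the form, and complete reducibility of the finite-dimensional $W$-module $X_{f}$. The only difference is cosmetic — you put $f$ in the radical first and then move $w$ across the form via $\langle wf,p\rangle_{\kappa}=\langle f,w^{-1}p\rangle_{\kappa}$, whereas the paper notes $\langle wf,wf\rangle_{\kappa}=0$ and applies Cauchy--Schwarz to each $wf$ directly — so this is essentially the paper's proof.
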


\begin{proof} The decomposability is a group-theoretic property. By the $W$-invariance property of $\langle\cdot,\cdot\rangle_{\kappa}$ it follows that $\langle wf,wf\rangle_{\kappa}=0$ for all $w$. The Cauchy--Schwartz inequality for $\langle\cdot,\cdot\rangle_{\kappa}$ is valid because $\kappa\in\Omega$ thus $\vert \langle wf,p\rangle_{\kappa}\vert ^{2}\leq\langle wf,wf\rangle_{\kappa}\langle p,p\rangle_{\kappa}=0$ for all $w\in W$ and
$p\in\mathcal{P}_{\tau}$. In particular $\langle w_{1}f+w_{2}f,p\rangle _{\kappa}=0$ for any $w_{1},w_{2}$ and thus $g\in X_{f}$ implies $\langle g,p\rangle_{\kappa}=0$.
\end{proof}

We will show that the set of singular values is a subset of a set of rational numbers with no accumulation point, that is, there is a minimum nonzero distance between elements.

\begin{Proposition}The eigenvalues of the class $\sum\limits_{v \in R_{ +}}\sigma_{v}$ considered as a~$($central$)$ transformation of the group algebra~$\mathbb{R} W$ are integers in the interval $[ -\# R_{ +} ,\# R_{ +}] $.
\end{Proposition}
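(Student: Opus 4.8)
The plan is to use that, in the one-parameter situation, the reflections of $W$ form a single conjugacy class, so that $z:=\sum_{v\in R_{+}}\sigma_{v}$ is a class sum and hence lies in the center of $\mathbb{R}W$ (and of $\mathbb{C}W$). Complexifying the operator ``multiplication by $z$'' and using the Wedderburn decomposition $\mathbb{C}W\cong\bigoplus_{\chi}M_{d_{\chi}}(\mathbb{C})$, where $\chi$ runs over the irreducible characters of $W$ and $d_{\chi}=\chi(1)$, Schur's lemma shows that $z$ acts on the $\chi$-block as the scalar $\omega_{\chi}(z)=(\#R_{+})\,\chi(\sigma_{v})/d_{\chi}$, with $\sigma_{v}$ any reflection. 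Hence the eigenvalues of $z$ acting on $\mathbb{R}W$ are exactly the numbers $\omega_{\chi}(z)$, and it remains to show that each of these is an integer lying in $[-\#R_{+},\#R_{+}]$.

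For the estimate, note that $\sigma_{v}^{2}$ is the identity, so in the representation affording $\chi$ the matrix $\tau(\sigma_{v})$ is diagonalizable with all eigenvalues equal to $\pm1$; therefore $\chi(\sigma_{v})\in\mathbb{Z}$ and $\vert\chi(\sigma_{v})\vert\le d_{\chi}$, which immediately gives $\omega_{\chi}(z)\in\mathbb{Q}$ and $\vert\omega_{\chi}(z)\vert\le\#R_{+}$.

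For integrality I would invoke the classical fact that $\omega_{\chi}$ is a ring homomorphism $Z(\mathbb{C}W)\to\mathbb{C}$ carrying class sums to algebraic integers: the $\mathbb{Z}$-span of the class sums is closed under multiplication, with nonnegative-integer structure constants, hence is a subring of $Z(\mathbb{C}W)$ that is finitely generated as a $\mathbb{Z}$-module, so every element of it --- in particular $z$ --- is integral over $\mathbb{Z}$, and applying $\omega_{\chi}$ shows $\omega_{\chi}(z)$ is an algebraic integer. Being also rational by the previous paragraph, $\omega_{\chi}(z)$ is an integer, and the proof is complete.

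I do not expect a genuine obstacle. The only points requiring a moment's care are that a central element of the semisimple algebra $\mathbb{R}W$ is diagonalizable with spectrum exactly $\{\omega_{\chi}(z)\}$, and the algebraic-integrality of central characters; both are standard. (One could bypass the first via the observation that in the group basis $z$ is represented by a symmetric $0$--$1$ matrix, the $(u,w)$-entry being $1$ iff $uw^{-1}$ is a reflection --- a symmetric condition, since reflections are involutions --- so the operator is real symmetric with monic integral characteristic polynomial, which already yields real algebraic-integer eigenvalues, and the rationality argument then upgrades them to integers.)
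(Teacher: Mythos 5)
Your argument is correct and is essentially the approach the paper takes: the paper's proof simply notes that the eigenvalues are roots of a monic integral characteristic equation (hence algebraic integers) and defers the details to \cite{Dunkl/Xu2014}, which is exactly the central-character/class-sum integrality argument you spell out, combined with rationality from $\chi(\sigma_{v})\in\mathbb{Z}$ and the bound $\vert\chi(\sigma_{v})\vert\leq d_{\chi}$. You have merely supplied in full the standard details that the paper cites by reference, so there is nothing to correct.
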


\begin{proof}The basic idea is that the solutions of the characteristic equation are algebraic integers. The details are in \cite[p.~194]{Dunkl/Xu2014}.
\end{proof}

Because the right regular representation of $W$ on $\mathbb{R}W$ is a direct sum of all irreducible representations of $W$ the integer property of eigenvalues applies to $\sum\limits_{v\in R_{+}}\rho(\sigma_{v}) $ for any irreducible representation $\rho$. Since $\rho$ is irreducible and $\sum\limits_{v\in R_{+}}\rho(\sigma_{v}) $ is central there is just one eigenvalue, denoted by~$\varepsilon(\rho) $. Denote the set of equivalence classes of irreducible representations of~$W$ by~$\widehat{W}$.

\begin{Proposition}Suppose $f\in\mathcal{P}_{\tau}$ then $\sum\limits_{i=1}^{N}x_{i}\mathcal{D}_{i}f=\sum\limits_{i=1}^{N}x_{i}\frac{\partial f}{\partial x_{i}}+\kappa\Big( \varepsilon(\tau) f-\sum\limits_{v\in R_{+}}\sigma_{v}f\Big)$. If $f$ is singular and homogeneous of degree~$n$ then $\kappa=\frac{n}{\varepsilon(\rho) -\varepsilon ( \tau) }$ where $\rho\in\widehat{W}$.
\end{Proposition}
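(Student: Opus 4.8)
The plan is first to establish the operator identity by a direct computation on the spanning set $\{p(x)\otimes u\colon p\in\mathcal{P},\,u\in V\}$, and then to read off the value of $\kappa$ from Euler's relation together with Schur's lemma applied to the central element $\sum_{v\in R_{+}}\sigma_{v}$.

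For the identity, I would apply the defining formula for $\mathcal{D}_{i}$ to $p(x)\otimes u$, multiply by $x_{i}$, and sum over $i$. The decisive simplification is $\sum_{i=1}^{N}x_{i}v_{i}=\langle x,v\rangle$, which cancels the denominator in the difference quotient, so that $\sum_{i=1}^{N}x_{i}\mathcal{D}_{i}(p\otimes u)$ equals $\sum_{i=1}^{N}x_{i}\frac{\partial p}{\partial x_{i}}\otimes u+\kappa\sum_{v\in R_{+}}\big(p(x)-p(x\sigma_{v})\big)\otimes\tau(\sigma_{v})u$. The first summand is the Euler term $\sum_{i=1}^{N}x_{i}\frac{\partial f}{\partial x_{i}}$. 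In the second, the piece containing $p(x)$ is $p(x)\otimes\big(\sum_{v\in R_{+}}\tau(\sigma_{v})\big)u$; since $\sum_{v\in R_{+}}\sigma_{v}$ is central in $\mathbb{R}W$ and $\tau$ is irreducible, this operator acts on $V$ as the scalar $\varepsilon(\tau)$, giving $\varepsilon(\tau)(p\otimes u)$. The piece containing $p(x\sigma_{v})$ is precisely $\sum_{v\in R_{+}}\sigma_{v}(p\otimes u)$ by the definition of the $W$-action on $\mathcal{P}_{\tau}$. Linearity in $f$ extends the identity to all of $\mathcal{P}_{\tau}$.

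For the second assertion, suppose $f$ is singular and homogeneous of degree $n\geq 1$; here $\kappa\neq 0$, since for $\kappa=0$ the Dunkl operators are the coordinate partials, which annihilate no nonconstant polynomial. Then $\mathcal{D}_{i}f=0$ makes the left side of the identity vanish, while Euler's relation gives $\sum_{i=1}^{N}x_{i}\frac{\partial f}{\partial x_{i}}=nf$; hence $\sum_{v\in R_{+}}\sigma_{v}f=\big(\varepsilon(\tau)+\frac{n}{\kappa}\big)f$, so $f$ is an eigenvector of the central operator $T:=\sum_{v\in R_{+}}\sigma_{v}$ on $\mathcal{P}_{\tau}$ with eigenvalue $\varepsilon(\tau)+n/\kappa$. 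Passing to the finite-dimensional $W$-module $X_{f}=\operatorname{span}\{wf\colon w\in W\}$, the fact that $T$ commutes with the $W$-action gives $T(wf)=w(Tf)=\big(\varepsilon(\tau)+n/\kappa\big)(wf)$, so $T$ acts on all of $X_{f}$ by this single scalar. Decomposing $X_{f}$ into $W$-irreducibles and comparing with the eigenvalue $\varepsilon(\rho)$ of $T$ on a summand of isotype $\rho$ yields $\varepsilon(\rho)=\varepsilon(\tau)+n/\kappa$, that is, $\kappa=\frac{n}{\varepsilon(\rho)-\varepsilon(\tau)}$ with $\rho\in\widehat{W}$ (in particular $\varepsilon(\rho)\neq\varepsilon(\tau)$).

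The computations throughout are routine; the only points that require care are the identification of $\sum_{v\in R_{+}}\tau(\sigma_{v})$ with the scalar $\varepsilon(\tau)$, which uses irreducibility of $\tau$ together with the centrality and integrality statements recorded in the two preceding propositions, and the exclusion of the degenerate case $\kappa=0$. So there is no serious obstacle here: the argument is essentially bookkeeping once the cancellation $\sum_{i}x_{i}v_{i}=\langle x,v\rangle$ is noticed.
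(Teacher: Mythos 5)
Your proposal is correct and follows essentially the same route as the paper: the same cancellation $\sum_i x_i v_i=\langle x,v\rangle$ yields the operator identity, with $\sum_{v\in R_+}\tau(\sigma_v)=\varepsilon(\tau)$ on $V$ by centrality and irreducibility, and the singular case is handled by Euler's formula plus decomposing $\operatorname{span}\{wf\colon w\in W\}$ into irreducibles, on each of which $\sum_{v\in R_+}\sigma_v$ acts by some $\varepsilon(\rho)$. Your explicit exclusion of $\kappa=0$ is a small point of extra care that the paper leaves implicit, but it does not change the argument.
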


\begin{proof}Let $p\in\mathcal{P}_{n}$ and $u\in V$ then
\begin{gather*}
\sum_{i=1}^{N}x_{i}\mathcal{D}_{i}(p(x) \otimes u) =\sum_{i=1}^{N}x_{i}\frac{\partial p(x) }{\partial x_{i}}\otimes u+\kappa\sum_{v\in R_{+}}( p(x) -p(x\sigma_{v})) \otimes( \tau(\sigma_{v})u) \\
\hphantom{\sum_{i=1}^{N}x_{i}\mathcal{D}_{i}(p(x) \otimes u)}{} =np(x) \otimes u+\kappa \bigg( \varepsilon (\tau) p(x) \otimes u-\sum_{v\in R_{+}}\sigma_{v}(p(x) \otimes u) \bigg) .
\end{gather*}
The statement $\sum\limits_{v\in R_{+}}p(x) \otimes ( \tau(\sigma_{v}) u) =\varepsilon(\tau) p(x) \otimes u$ follows from the fact that $V$ is the representation space for $\tau$. The relation is extended to all of $\mathcal{P}_{\tau}$ by linearity. That is, if $f\in\mathcal{P}_{n}\otimes V$ then $\sum\limits_{i=1}^{N}x_{i}\mathcal{D}_{i}( f(x)) =nf(x) +\kappa\varepsilon(\tau) f ( x) -\kappa\sum\limits_{v\in R_{+}}\sigma_{v}f(x) $. If $f$ is singular then it must be an eigenfunction of $\sum\limits_{v\in R_{+}}\sigma_{v}$. The space $\operatorname{span} \{ wf\colon w\in W\} $ consists of singular polynomials (same~$\kappa$) and can be decomposed into irreducible $W$-submodules, thus the eigenvalues of $\sum\limits_{v\in R_{+}}\sigma_{v}$ are ele\-ments of the set $\big\{\varepsilon(\rho) \colon \rho\in\widehat{W}\big\}$. Hence there is some $\rho\in\widehat{W}$ such that $\sum\limits_{v\in R_{+}}\sigma_{v}f=\varepsilon(\rho) f$ and $0=nf+\kappa ( \varepsilon(\tau) -\varepsilon(\rho)) f$. (The case $\varepsilon(\tau) =\varepsilon (\rho) $ is impossible.)
\end{proof}

It is possible that $\varepsilon( \rho_{1}) =\varepsilon(\rho_{2} ) $ for some $\rho_{1},\rho_{2}\in\widehat{W}$ with $\rho _{1}\neq\rho_{2}$, but then~$f$ can be decomposed into two components, each being singular (by convolution with the respective characters). The minimum distance between two singular values is bounded below by $\big(\max\limits_\rho\vert \varepsilon( \tau) -\varepsilon( \rho) \vert \big) ^{-1}$. Recall $\{\varepsilon( \rho)\} $ is a set of integers contained in $[ -\#R_{+},\#R_{+}] $.

For each $n\geq1$ restrict the form $\langle\cdot,\cdot\rangle_{\kappa}$ to $\mathcal{P}_{n}\otimes V$. The condition that the form is positive-definite is that the leading principal minors of the Gram matrix are positive (for example use the basis $\big\{ x^{\alpha}\otimes u_{i}\colon \vert \alpha\vert =n,\,1\leq i\leq\dim V\big\} $). The minors are polynomials in~$\kappa$ and are positive in a neighborhood of~$0$. Let~$z_{n}$ denote the
positive zero of any of the minors closest to $0$, that is the form is positive for $0\leq\kappa<z_{n}$ and is positive-semidefinite for $\kappa=z_{n}$ and there exists $f_{n}\in\mathcal{P}_{n}\otimes V$ such that
$f_{n}\neq0$ and $\langle f_{n},f_{n}\rangle_{\kappa}=0$ (which implies $\langle f_{n},g\rangle_{\kappa}=0$ for any $g\in\mathcal{P}_{n}\otimes V$ by the Cauchy--Schwartz inequality). If there are no positive zeros set $z_{n}=\infty$.

(The reason for the following careful argument is to avoid the hypothetical situation $z_{n} =1 +\frac{1}{n}$, $\min z_{n} =1$ and there is no nonzero polynomial $f$ with $\langle f ,f \rangle_{\kappa} =0$ for $\kappa=1$.)

\begin{Lemma}Suppose $z_{n} >z_{n +1}$ then $z_{n +1}$ is a singular value.
\end{Lemma}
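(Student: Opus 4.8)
The plan is to produce, at $\kappa = z_{n+1}$, an explicit singular polynomial of degree $n+1$ built from the degenerate vector guaranteed by the definition of $z_{n+1}$. First I would invoke the construction preceding the statement: at $\kappa = z_{n+1}$ the form $\langle\cdot,\cdot\rangle_\kappa$ restricted to $\mathcal{P}_{n+1}\otimes V$ is positive-semidefinite but not positive-definite, so there is $f\in\mathcal{P}_{n+1}\otimes V$ with $f\neq 0$ and $\langle f,f\rangle_\kappa = 0$. Since the restricted form is positive-semidefinite, the Cauchy--Schwartz inequality applies within $\mathcal{P}_{n+1}\otimes V$ and gives $\langle f,g\rangle_\kappa = 0$ for every $g\in\mathcal{P}_{n+1}\otimes V$.

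Next I would test whether $f$ is singular by computing $\langle\mathcal{D}_i f,\mathcal{D}_i f\rangle_\kappa$ for each $i$. By property~(iv) of the form, $\langle\mathcal{D}_i f,\mathcal{D}_i f\rangle_\kappa = \langle f,x_i\mathcal{D}_i f\rangle_\kappa$. Because $\mathcal{D}_i$ lowers degree by one, $\mathcal{D}_i f\in\mathcal{P}_n\otimes V$ and hence $x_i\mathcal{D}_i f\in\mathcal{P}_{n+1}\otimes V$; by the previous paragraph this pairing vanishes, so $\langle\mathcal{D}_i f,\mathcal{D}_i f\rangle_\kappa = 0$.

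Now comes the decisive point, and the only place where the hypothesis $z_n > z_{n+1}$ enters: $\mathcal{D}_i f$ lives in degree $n$, and since $\kappa = z_{n+1} < z_n$, the form restricted to $\mathcal{P}_n\otimes V$ is positive-definite at this value of~$\kappa$. Therefore $\langle\mathcal{D}_i f,\mathcal{D}_i f\rangle_\kappa = 0$ forces $\mathcal{D}_i f = 0$. As this holds for every~$i$ and $f$ is a nonzero homogeneous polynomial of degree $n+1\geq 1$ (hence nonconstant), $f$ is a singular polynomial for $\kappa = z_{n+1}$, and the positive number $z_{n+1}$ is a singular value.

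I do not anticipate a serious obstacle; the argument is essentially bookkeeping with the two degree slots $n$ and $n+1$, exploiting that $\mathcal{D}_i$ shifts between them. The one subtlety worth stating carefully is that Cauchy--Schwartz is being applied only to the restriction of the form to the single graded piece $\mathcal{P}_{n+1}\otimes V$, where semidefiniteness is automatic from the definition of $z_{n+1}$; one does not need to know in advance that $z_{n+1}\in\Omega$, i.e., that the form is semidefinite in all degrees simultaneously.
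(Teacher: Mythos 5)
Your proposal is correct and follows essentially the same route as the paper: pair the degenerate degree-$(n+1)$ vector $f$ against $x_i\mathcal{D}_i f$ (which lies back in degree $n+1$, so the pairing vanishes by Cauchy--Schwartz on the semidefinite restricted form), use the adjointness $\langle\mathcal{D}_i f,g\rangle_\kappa=\langle f,x_ig\rangle_\kappa$, and invoke positive-definiteness on $\mathcal{P}_n\otimes V$ at $\kappa=z_{n+1}<z_n$ to force $\mathcal{D}_i f=0$. The only cosmetic difference is that the paper sums over $i$ before concluding, while you argue index by index.
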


\begin{proof}Set $\kappa=z_{n+1}$. By the definition of $z_{n+1}$ and properties of the form
\begin{gather*}
0=\bigg\langle f_{n+1},\sum_{i=1}^{N}x_{i}\mathcal{D}_{i}f_{n+1}\bigg\rangle_{\kappa }=\sum_{i=1}^{N}\langle\mathcal{D}_{i}f_{n+1},\mathcal{D}_{i}f_{n+1}%
\rangle_{\kappa}.
\end{gather*}
By hypothesis the form is positive-definite on $\mathcal{P}_{n}\otimes V$ for $\kappa=z_{n+1}<z_{n}$. Thus $f_{n+1}$ is sin\-gular.
\end{proof}

Define the subsequence $ \{ z_{n_{i}} \} $ by $n_{1}=\min \{n\colon z_{n}<\infty \} $ and $n_{i+1}=\min \{ n\colon n>n_{i},\, z_{n}<z_{n-1} \} $ (essentially the points of decrease of the sequence). If there are no positive eigenvalues then each $z_{n}=\infty$ and the form is positive-definite for $\kappa\geq0$. Now assume there is at least one $z_{n}<\infty$. Each $z_{n}\geq z_{n_{i}}$ for some~$i$.

\begin{Theorem}Let $z_{0} =\min \{ z_{n_{i}} \colon i \geq1 \} $ then $z_{0} =z_{n_{j}}$ for some $j$, the form $\langle\cdot, \cdot\rangle_{\kappa}$ is positive-definite for $0 \leq\kappa<z_{0}$ and $z_{0}$ is a singular value.
\end{Theorem}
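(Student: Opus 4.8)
The plan is to combine three facts already in hand: the preceding Lemma, by which a strict decrease $z_n>z_{n+1}$ forces $z_{n+1}$ to be a singular value; the discreteness of the set of singular values shown above (they lie in a set of rationals with a uniform positive gap, hence have no accumulation point); and property~(iii) of $\langle\cdot,\cdot\rangle_\kappa$, that $\mathcal{P}_m\otimes V$ and $\mathcal{P}_n\otimes V$ are orthogonal for $m\neq n$, so the Gram matrix on $\mathcal{P}_\tau$ is block-diagonal in the degree grading.

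First I would check that every $z_{n_i}$ is finite and is a singular value. Note $n_1\geq1$: on $\mathcal{P}_0\otimes V$ the form equals $\langle1\otimes u_1,1\otimes u_2\rangle_\kappa=\langle u_1,u_2\rangle_V$, which is positive-definite for every $\kappa$, so $z$ at degree $0$ is $\infty$ and $0$ is not among the indices with $z_n<\infty$. For $i\geq2$ the definition of $n_i$ gives $z_{n_i-1}>z_{n_i}$, while for $i=1$ minimality of $n_1$ gives $z_{n_1-1}=\infty>z_{n_1}$; in either case $z_{n_i}<z_{n_i-1}$, hence $z_{n_i}<\infty$, and applying the preceding Lemma with $n=n_i-1$ (whose hypothesis, positive-definiteness on $\mathcal{P}_{n_i-1}\otimes V$ at $\kappa=z_{n_i}<z_{n_i-1}$, holds by the definition of $z_{n_i-1}$) shows that $z_{n_i}$ is a singular value.

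Next I would show the minimum $z_0=\min\{z_{n_i}\colon i\geq1\}$ is attained. Since $z_{n_1}$ is finite and positive and, by discreteness, the interval $(0,z_{n_1}]$ contains only finitely many singular values, the set $\{z_{n_i}\colon z_{n_i}\leq z_{n_1}\}$ is a nonempty finite set of singular values; it attains its minimum, and this is the minimum of the entire family because every remaining $z_{n_i}$ exceeds $z_{n_1}$. Hence $z_0=z_{n_j}$ for some $j$, and in particular $z_0$ is a singular value.

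For the positivity statement I would invoke the fact, already recorded in the construction, that each $z_n\geq z_{n_i}$ for some $i$; therefore $z_n\geq z_0$ for every $n$. Consequently, for $0\leq\kappa<z_0\leq z_n$ the restriction of $\langle\cdot,\cdot\rangle_\kappa$ to $\mathcal{P}_n\otimes V$ is positive-definite for every $n$, and by block-diagonality the form is positive-definite on all of $\mathcal{P}_\tau$. The delicate point throughout is the attainment of the minimum: the decrease values $z_{n_i}$ need not be monotone, so one genuinely needs the absence of accumulation points in the singular-value set to exclude an infimum that is approached but never reached — exactly the pathology flagged in the remark preceding the preceding Lemma. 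Everything else is bookkeeping with the definitions of $n_i$ and $z_n$.
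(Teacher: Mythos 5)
Your proposal is correct and follows essentially the same route as the paper: apply the preceding lemma to conclude each point of decrease $z_{n_i}$ is a singular value, use the discreteness of singular values to see the minimum is attained at some $z_{n_j}$, and use $z_n\geq z_0$ for all $n$ together with the degree grading to get positive-definiteness for $0\leq\kappa<z_0$. You merely spell out details (the $i=1$ case via degree $0$, and the block-diagonality step) that the paper leaves implicit.
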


\begin{proof}By the lemma the subsequence consists of singular values. The spacing of singular values implies there is no accumulation point thus the minimum $z_{0}$ is achieved at one of the values $z_{n_{j}}$. Hence there exists $f \in\mathcal{P}_{n_{j}} \otimes V$ such that $f \neq0$ and $f$ is singular for $\kappa=z_{0}$.
\end{proof}

The same argument can be applied to negative $\kappa$: let $z_{n}^{ \prime}$ be the negative zero closest to $0$ of the leading principal minors of the form restricted to $\mathcal{P}_{n} \otimes V$ so the form is positive-definite for $z_{n}^{ \prime} <\kappa\leq0$; if $z_{n +1}^{ \prime} >z_{n}^{ \prime}$ then $z_{n +1}^{ \prime}$ is a singular value and so is $z_{0}^{ \prime} =\max\{ z_{n}^{ \prime}\} $ (excluding the situation of no negative singular values where $z_{0}^{ \prime} = -\infty$).

To summarize there is an interval $z_{0}^{\prime}<\kappa<z_{0}$ for which $\langle\cdot,\cdot\rangle_{\kappa}$ is positive-definite and $z_{0}$, $z_{0}^{\prime}$ are singular values if finite, respectively.

\section{Exterior powers of the reflection representation}\label{ExtP}

Suppose $W$ has only one conjugacy class of reflections and $\operatorname{span}_{\mathbb{R}}(R) =\mathbb{R}^{N}$. Specialize $\tau$ to the reflection representation of $W$ on $V=\mathbb{R}^{N}$. (The previous two statements imply that $W$ is indecomposable and the reflection representation is irreducible.) Let $\wedge^{m}(V) =V\wedge V\wedge\cdots\wedge V$ ($m$ factors) with $1\leq m\leq N$. We will show that $\mathcal{P}_{1}\otimes\wedge^{m}(V)$ has singular polynomials for $\kappa=\pm\frac{1}{d_{N}}$, where $d_{N}$ is the largest fundamental degree of~$W$ (also see \cite[Corollary~4.2]{Etingof/Stoica2009}), and
$1\leq m<N$. Ciubotaru \cite[Section~5]{Ciubotaru2016} proved a necessary condition for the region of positivity for any $W$ (in fact, also for complex reflection groups) and any irreducible $W$-module $U$, which involves the
decomposition of $U\otimes\wedge^{m}(V) $ into $W$-irreducible subspaces.

Let $ \{ u_{i}\colon 1\leq i\leq N \} $ be the standard orthonormal basis of $V$, and let $\mathbf{x}=\sum\limits_{i=1}^{N}x_{i}\otimes u_{i}$.

\begin{Lemma}\label{formabR}For $a,b\in V$ the symmetric bilinear form $\sum\limits_{v\in R_{+}}\frac{\langle a,v\rangle\langle b,v\rangle}{\vert v\vert^{2}}=\frac{\#R_{+}}{N}\langle a,b\rangle$. In particular $\sum\limits_{v\in R_{+}}\frac{v_{i}v_{j}}{\vert v\vert ^{2}}=\frac{\#R_{+}}{N}\delta_{ij}$.
\end{Lemma}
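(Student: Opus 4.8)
The plan is to exploit the $W$-invariance of the bilinear form $B(a,b):=\sum_{v\in R_{+}}\frac{\langle a,v\rangle\langle b,v\rangle}{\vert v\vert^{2}}$ on $V$. First I would observe that $B$ is well-defined independently of the choice of $R_{+}$, since replacing $v$ by $-v$ leaves the summand unchanged. Next, for any $w\in W$ the map $v\mapsto vw$ permutes $R$ (this is the defining property of a root system) and, because $w$ is orthogonal, $\vert vw\vert=\vert v\vert$ and $\langle aw,vw\rangle=\langle a,v\rangle$. Hence $B(aw,bw)=\sum_{v\in R}\frac{\langle aw,vw\rangle\langle bw,vw\rangle}{2\vert vw\vert^{2}}=B(a,b)$ after reindexing $v\mapsto vw$; here I have written the sum over all of $R$ with a factor $\tfrac12$ to avoid worrying about whether $w$ preserves $R_{+}$. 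So $B$ is a $W$-invariant symmetric bilinear form on $V$.

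The key step is then to invoke irreducibility of the reflection representation $\tau$ on $V=\mathbb{R}^{N}$ (guaranteed by the standing hypotheses that $W$ is indecomposable with one class of reflections). By Schur's lemma — in the real form, using that $V$ is absolutely irreducible for a real reflection group, or simply because the space of $W$-invariant symmetric bilinear forms on an irreducible module is one-dimensional — any $W$-invariant symmetric bilinear form on $V$ is a scalar multiple of the standard inner product $\langle\cdot,\cdot\rangle$. Therefore $B(a,b)=c\,\langle a,b\rangle$ for some constant $c$ depending only on $R$.

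To pin down $c$, I would take the trace of both sides viewed as operators, i.e.\ sum over an orthonormal basis: $\sum_{i=1}^{N}B(u_{i},u_{i})=c\sum_{i=1}^{N}\langle u_{i},u_{i}\rangle=cN$. On the other hand $\sum_{i=1}^{N}B(u_{i},u_{i})=\sum_{v\in R_{+}}\frac{1}{\vert v\vert^{2}}\sum_{i=1}^{N}\langle u_{i},v\rangle^{2}=\sum_{v\in R_{+}}\frac{\vert v\vert^{2}}{\vert v\vert^{2}}=\#R_{+}$. Hence $c=\frac{\#R_{+}}{N}$, which is exactly the claimed identity. The final sentence of the lemma about $\sum_{v\in R_{+}}\frac{v_{i}v_{j}}{\vert v\vert^{2}}=\frac{\#R_{+}}{N}\delta_{ij}$ is just the specialization $a=u_{i}$, $b=u_{j}$.

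I do not anticipate a serious obstacle; the only point requiring a little care is the appeal to Schur's lemma over $\mathbb{R}$, which is why one wants irreducibility of the reflection representation rather than mere indecomposability — but this is precisely what the parenthetical remark preceding the lemma has already secured. An alternative that sidesteps Schur entirely would be to check $W$-equivariance of the operator $T$ defined by $\langle Ta,b\rangle=B(a,b)$ and note that $T$ commutes with the irreducible action, but the bilinear-form version above is cleaner.
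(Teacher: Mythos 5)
Your proposal is correct and follows essentially the same route as the paper: establish $W$-invariance of the form, invoke Schur's lemma via irreducibility of the reflection representation to conclude it is a multiple of $\langle\cdot,\cdot\rangle$, and fix the constant by the trace computation giving $\#R_{+}$ versus $N$. The only cosmetic difference is that you verify invariance by reindexing a sum over all of $R$ (with a factor $\tfrac12$), whereas the paper checks invariance under each generating reflection and notes the summands depend only on $\pm v$; both handle the same point.
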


\begin{proof}For $a,b\in V$ define $[a,b] :=\sum\limits_{v\in R_{+}}\frac{1}{[v] ^{2}}\langle a,v\rangle\langle b,v\rangle$ then
\begin{gather*}
 [ a\sigma_{u},b\sigma_{u} ] =\sum\limits_{v\in R_{+}}\frac{1}{\vert v\vert ^{2}}\langle a\sigma_{u},v\rangle\langle b\sigma_{u},v\rangle=\sum\limits_{v\in R_{+}}\frac{1}{\vert v\vert^{2}}\langle a,v\sigma_{u}\rangle\langle b,v\sigma_{u}\rangle=[a,b]
\end{gather*}
for all $u\in R_{+}$. Thus $[ aw,bw] =[a,b] $ for all $w\in W$ and the matrix representing this bilinear form with respect to the basis $ \{ u_{i} \} $ commutes with each~$w$. By hypothesis $\tau$ is irreducible and by Schur's lemma the matrix is a scalar multiple of the identity and $[a,b] =c\langle a,b\rangle$ for all $a,b\in V$. The matrix representing the form $\frac{\langle a,v\rangle\langle b,v\rangle}{\vert v\vert ^{2}}$ has trace $1$ thus the trace for the sum over~$v\in R_{+}$ is~$\#R_{+}$. The form $\langle a,b\rangle$ corresponds to the identity matrix and has trace~$N$. The other conclusion follows from $v_{i}v_{j}=\langle u_{i},v\rangle\langle u_{j},v\rangle$.
\end{proof}

Henceforth assume $\vert v\vert ^{2}=2$ for all $v\in R$, and set $\gamma:=2\frac{\#R_{+}}{N}$, called the \textit{Coxeter number} (see \cite[Section~3.18]{Humphreys1990}); thus $\sum\limits_{v\in R_{+}}v_{i}v_{j}=\gamma\delta_{ij}$. The computations use a boundary operator.

\begin{Definition}Suppose $a ,b_{1} ,b_{2} ,\ldots,b_{m} \in V$ then
\begin{gather*}
\partial(a) ( b_{1} \wedge b_{2} \wedge\cdots\wedge b_{m} ) :=\sum_{i =1}^{m} ( -1 ) ^{i -1} \langle a ,b_{i} \rangle b_{1} \wedge b_{2} \wedge\cdots\wedge \widehat{b_{i}} \wedge\cdots\wedge b_{m},
\end{gather*}
where the caret indicates the omitted factor. The operator $\partial ( a) $ is extended to all of $\wedge^{m}(V) $ by linearity.
\end{Definition}

It can be checked that $\partial(a) $ is well-defined, for example suppose that $b_{m}=\sum\limits_{i=1}^{m-1}c_{i}b_{i}$ then
\begin{gather*}
\partial(a) ( b_{1}\wedge b_{2}\wedge\cdots\wedge b_{m} ) =(-1) ^{m-1}\left( \langle a,b_{m}\rangle -\sum\limits_{i=1}^{m-1}c_{i}\langle a,b_{i}\rangle\right) b_{1}\wedge \cdots\wedge b_{m-1}=0.
\end{gather*}

\begin{Lemma}\label{propDa}Suppose $a ,b_{0} \in V$ and $b \in\wedge^{m}(V) $ then $\partial(a) ( b_{0} \wedge b ) = \langle a ,b_{0} \rangle b -b_{0} \wedge\partial(a) b$ and $\partial (a) ^{2} ( b_{0} \wedge b ) =0$.
\end{Lemma}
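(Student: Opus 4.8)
The plan is to verify the two identities by reducing to the case where $b$ is a single wedge of vectors, since $\partial(a)$ is linear and every element of $\wedge^m(V)$ is a sum of such decomposable wedges. So I would write $b=b_1\wedge b_2\wedge\cdots\wedge b_m$ and apply the definition of $\partial(a)$ to the $(m+1)$-fold wedge $b_0\wedge b_1\wedge\cdots\wedge b_m$.

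\textbf{First identity.} Expanding $\partial(a)(b_0\wedge b_1\wedge\cdots\wedge b_m)$ by the definition, the $i=0$ term (the term omitting $b_0$) contributes $(-1)^0\langle a,b_0\rangle\, b_1\wedge\cdots\wedge b_m=\langle a,b_0\rangle b$. Each remaining term, with $1\le i\le m$, is $(-1)^{i}\langle a,b_i\rangle\, b_0\wedge b_1\wedge\cdots\wedge\widehat{b_i}\wedge\cdots\wedge b_m$; pulling the factor $b_0$ out to the front costs nothing (it is already in front), so this equals $-b_0\wedge\big((-1)^{i-1}\langle a,b_i\rangle\, b_1\wedge\cdots\wedge\widehat{b_i}\wedge\cdots\wedge b_m\big)$. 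Summing over $i$ from $1$ to $m$ gives exactly $-b_0\wedge\partial(a)(b_1\wedge\cdots\wedge b_m)=-b_0\wedge\partial(a)b$. The only thing to be careful about is the sign bookkeeping: the index that is called "$i$" in the definition of $\partial(a)$ on the $(m+1)$-fold wedge runs $0,1,\ldots,m$ with sign $(-1)^i$, whereas in $\partial(a)b$ the index runs $1,\ldots,m$ with sign $(-1)^{i-1}$, so the two sign conventions differ by exactly the factor $-1$ that produces the minus sign in the stated formula. Adding the $i=0$ term back in gives $\langle a,b_0\rangle b-b_0\wedge\partial(a)b$, as claimed.

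\textbf{Second identity.} For $\partial(a)^2(b_0\wedge b)$, apply the first identity twice:
\begin{gather*}
\partial(a)^2(b_0\wedge b)=\partial(a)\big(\langle a,b_0\rangle b-b_0\wedge\partial(a)b\big)=\langle a,b_0\rangle\partial(a)b-\partial(a)\big(b_0\wedge\partial(a)b\big).
\end{gather*}
Applying the first identity again to the last term (with $\partial(a)b$ in place of $b$) yields $\partial(a)(b_0\wedge\partial(a)b)=\langle a,b_0\rangle\partial(a)b-b_0\wedge\partial(a)^2 b$, so
\begin{gather*}
\partial(a)^2(b_0\wedge b)=\langle a,b_0\rangle\partial(a)b-\langle a,b_0\rangle\partial(a)b+b_0\wedge\partial(a)^2 b=b_0\wedge\partial(a)^2 b.
\end{gather*}
This reduces the claim $\partial(a)^2(b_0\wedge b)=0$ to $\partial(a)^2 b=0$, i.e.\ to the same statement in one lower exterior degree. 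An induction on $m$ then finishes it: the base case $b\in\wedge^0(V)$ (scalars) or $b\in\wedge^1(V)$ is immediate since $\partial(a)b$ is then a scalar (resp.\ $\partial(a)$ of a scalar is $0$), and the inductive step is precisely the displayed reduction. I expect no real obstacle here; the only delicate point is consistently tracking the sign convention between the two exterior degrees in the first identity, and making sure the reduction is phrased as a genuine induction rather than circular self-reference.
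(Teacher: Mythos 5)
Your proof is correct and follows essentially the same route as the paper: the first identity is verified directly from the definition on decomposable wedges, and the second is obtained from the identity $\partial(a)^{2}(b_{0}\wedge b)=b_{0}\wedge\partial(a)^{2}b$ (derived exactly as in your display) followed by iterating down the exterior degree. The paper phrases the iteration as "repeatedly use this relation" rather than a formal induction, but the content is the same.
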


\begin{proof}The first part follows directly from the definition. Apply $\partial (a) $ to both sides of the equation
\begin{gather*}
\partial(a) ^{2} ( b_{0} \wedge b) = \langle a ,b_{0} \rangle\partial(a) b -\big\{ \langle a ,b_{0} \rangle\partial(a) b-b_{0} \wedge\partial(a) ^{2} b\big\} =b_{0} \wedge \partial(a) ^{2} b.
\end{gather*} Set $b =b_{1} \wedge\cdots\wedge b_{m}$ and repeatedly use this relation to show $\partial(a) ^{2} b =b_{1} \wedge\cdots\wedge\partial(a) ^{2} b_{m} =0$.
\end{proof}

Denote the exterior power of $\tau$ on $\wedge^{m}(V) $ by~$\tau_{m}$. The operator $\sum\limits_{v\in R_{+}}\tau_{m}( \sigma_{v}) $ acts as multiplication by $\varepsilon( \tau_{m})=\big( \frac{N}{2}-m\big) \gamma$ on $\wedge^{m}(V) $. (Assume that $v=\sqrt{2}u_{1}\in R$ and consider the action of $\tau_{m}(\sigma_{v}) $ on the basis
\begin{gather*}
\{ u_{i_{1}}\wedge u_{i_{2}}\wedge\cdots\wedge u_{i_{m}}\colon 1\leq i_{1}<i_{2}<\cdots<i_{m}\leq N\} ;
\end{gather*}
there are $\binom{N-1}{m-1}$ eigenvectors for the eigenvalue $-1$ and $\binom{N-1}{m}$ eigenvectors for $1$ thus $\operatorname{Tr}( \tau_{m} ( \sigma_{v}) ) $ $=\binom{N-1}{m}-\binom{N-1}{m-1}$, then $\varepsilon ( \tau_{m} ) =\#R_{+}\operatorname{Tr} ( \tau_{m} ( \sigma_{v}) ) /\dim\wedge^{m}(V) =\frac{1}{2}\gamma( N-2m)$.)

\begin{Proposition}Suppose $v \in R$ and $b \in\wedge^{m}(V) $ then $\tau_{m}(\sigma_{v}) b =b -v \wedge\partial(v) b$.
\end{Proposition}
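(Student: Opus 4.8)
The plan is to verify the formula $\tau_{m}(\sigma_{v})b = b - v\wedge\partial(v)b$ on a basis of $\wedge^{m}(V)$ adapted to the reflection $\sigma_{v}$, then invoke linearity. Since both sides are linear in $b$, it suffices to check it on decomposable wedges $b = b_{1}\wedge\cdots\wedge b_{m}$ with each $b_{i}$ chosen to be an eigenvector of $\sigma_{v}$. Recalling $|v|^{2}=2$, the reflection $\sigma_{v}$ has the single $-1$-eigenvector $v$ (up to scalar) and the $(N-1)$-dimensional $+1$-eigenspace $v^{\perp}$. So I would treat two cases for the decomposable wedge: either all factors $b_{i}$ lie in $v^{\perp}$, or (after reordering, which only introduces a sign shared by both sides) exactly one factor equals $v$ and the rest lie in $v^{\perp}$.

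In the first case, $\tau_{m}(\sigma_{v})b = (\sigma_{v}b_{1})\wedge\cdots\wedge(\sigma_{v}b_{m}) = b_{1}\wedge\cdots\wedge b_{m} = b$, and on the right side $\partial(v)b = 0$ because every $\langle v,b_{i}\rangle = 0$; hence both sides equal $b$. In the second case write $b = v\wedge b'$ with $b' = b_{2}\wedge\cdots\wedge b_{m}$, all factors of $b'$ in $v^{\perp}$. Then $\tau_{m}(\sigma_{v})b = (\sigma_{v}v)\wedge(\sigma_{v}b') = -v\wedge b' = -b$. For the right side, use Lemma~\ref{propDa}: $\partial(v)(v\wedge b') = \langle v,v\rangle b' - v\wedge\partial(v)b' = 2b' - 0 = 2b'$, since $\partial(v)b' = 0$ (all factors orthogonal to $v$). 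Therefore $b - v\wedge\partial(v)b = v\wedge b' - v\wedge(2b') = -v\wedge b' = -b$, matching. This disposes of both cases.

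The only mild subtlety — and the step I would be most careful about — is the reordering argument needed to bring a general eigenvector-decomposable wedge into one of the two normal forms: if the $-1$-eigenvector $v$ appears in some position $i\neq 1$, permuting it to the front multiplies $b_{1}\wedge\cdots\wedge b_{m}$ by $(-1)^{i-1}$; I must check this same sign factors out of $b - v\wedge\partial(v)b$, which it does because $\partial(v)$ is itself defined with the alternating signs $(-1)^{i-1}$ and is well-defined on $\wedge^{m}(V)$ (as noted after the definition of $\partial$), so the whole identity is compatible with the antisymmetry of the wedge. Since decomposable eigenvector-wedges of the above two types span $\wedge^{m}(V)$, linearity finishes the proof.
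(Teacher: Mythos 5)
Your proof is correct, but it follows a different route from the paper's. The paper proves the identity by brute-force expansion on an arbitrary decomposable wedge: with $|v|^{2}=2$ one has $b_{i}\sigma_{v}=b_{i}-\langle b_{i},v\rangle v$, so $\tau_{m}(\sigma_{v})b=(b_{1}-\langle b_{1},v\rangle v)\wedge\cdots\wedge(b_{m}-\langle b_{m},v\rangle v)$; all terms containing $v$ twice die because $v\wedge v=0$, and the surviving first-order terms, after moving each $v$ to the front with the sign $(-1)^{i-1}$, reassemble exactly into $-v\wedge\partial(v)b$. You instead verify the identity only on a spanning set adapted to $\sigma_{v}$ (wedges of eigenvectors, with either no factor or one factor equal to $v$), using Lemma~\ref{propDa} to compute $\partial(v)(v\wedge b')=2b'$, and then invoke linearity of both sides. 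Both arguments are sound and both use the normalization $|v|^{2}=2$. The paper's computation needs no choice of basis and makes the operator $\partial(v)$ appear organically from the cross terms; yours reduces the check to two trivial eigenvalue cases, at the cost of the spanning/diagonalization step and the dependence on Lemma~\ref{propDa}. One small simplification to your write-up: the reordering sign you worry about is handled automatically, since both $b\mapsto\tau_{m}(\sigma_{v})b$ and $b\mapsto b-v\wedge\partial(v)b$ are linear maps on $\wedge^{m}(V)$, so it suffices to check equality on any spanning set (e.g.\ increasing wedges in an orthogonal basis containing $v$), with no separate sign bookkeeping needed.
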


\begin{proof}Let $b =b_{1} \wedge b_{2} \wedge\cdots\wedge b_{m}$. By definition
\begin{gather*}
\tau_{m} (\sigma_{v}) b =( b_{1} \sigma_{v})\wedge\cdots\wedge( b_{m} \sigma_{v}) =( b_{1} - \langle b_{1} ,v \rangle v) \wedge\cdots\wedge( b_{m} - \langle b_{m} ,v\rangle v ) \\
\hphantom{\tau_{m} (\sigma_{v}) b}{} =b - \langle b_{1} ,v \rangle v \wedge b_{2} \wedge\cdots \wedge b_{m} - \langle b_{2} ,v \rangle b_{1} \wedge v \wedge\cdots \wedge b_{m} -\cdots\\
\hphantom{\tau_{m} (\sigma_{v}) b}{} =b -v \wedge( \langle b_{1} ,v \rangle b_{2} \wedge\cdots \wedge b_{m} - \langle b_{2} ,v \rangle b_{1} \wedge b_{3} \wedge\cdots) =b -v\wedge\partial(v) b.\tag*{\qed}
\end{gather*}\renewcommand{\qed}{}
\end{proof}

To compute the terms in $\mathcal{D}_{i} p$ we find $\frac{x_{i} -( x\sigma_{v}) _{i}}{ \langle x ,v \rangle} =v_{i}$ and $\frac{ \langle a,x \rangle- \langle a ,x \sigma_{v} \rangle}{ \langle x ,v \rangle} = \langle
a ,v \rangle$. Note $x$ and $\mathbf{x}$ are different objects, with different transformation rules for~$W$, in fact $\sigma_{v} \mathbf{x} =\mathbf{x}$, because
\begin{gather*}
\sigma_{v} \mathbf{x} =\sum_{i =1}^{N} ( x_{i} - \langle x ,v \rangle v_{i}) \otimes( u_{i} -v_{i} v) =\mathbf{x} -2 \langle x ,v \rangle v + \langle x ,v \rangle\vert v \vert ^{2} v =\mathbf{x}.
\end{gather*}

\begin{Theorem} Suppose $a\in V,b\in\wedge^{m}(V) $ then $\mathbf{x}\wedge b\in\mathcal{P}_{1}\otimes\wedge^{m+1}(V) $, $\sum \limits_{i=1}^{N}a_{i}\mathcal{D}_{i}( \mathbf{x}\wedge b)= ( 1-\gamma\kappa ) a\wedge b$, and $\mathbf{x}\wedge b$ is singular for $\kappa=1/\gamma$.
\end{Theorem}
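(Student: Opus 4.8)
The plan is to expand $\mathbf{x}\wedge b$ in the monomial basis, apply the Dunkl operators of $\mathcal{P}_{1}\otimes\wedge^{m+1}(V)$ term by term, and collapse the resulting sums using the boundary‑operator identities of Lemma~\ref{propDa} and the preceding Proposition together with the Coxeter‑number identity $\sum_{v\in R_{+}}v_{i}v_{j}=\gamma\delta_{ij}$ coming from Lemma~\ref{formabR}. Membership in $\mathcal{P}_{1}\otimes\wedge^{m+1}(V)$ is immediate once we write $\mathbf{x}\wedge b=\sum_{j=1}^{N}x_{j}\otimes(u_{j}\wedge b)$: this is a polynomial of degree $1$ with values in $\wedge^{m+1}(V)$, which is nonzero since $m<N$, and $\mathbf{x}\wedge b\neq0$ whenever $b\neq0$ because $u_{j}\wedge b\neq0$ for at least one~$j$.

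First I would compute $\mathcal{D}_{i}(\mathbf{x}\wedge b)$ directly from the definition. Using $\partial x_{j}/\partial x_{i}=\delta_{ij}$ and $(x_{j}-(x\sigma_{v})_{j})/\langle x,v\rangle=v_{j}$, and pulling the linear operator $\tau_{m+1}(\sigma_{v})$ through the sum over $j$ with $\sum_{j}v_{j}u_{j}=v$, I obtain
\[
\mathcal{D}_{i}(\mathbf{x}\wedge b)=u_{i}\wedge b+\kappa\sum_{v\in R_{+}}v_{i}\,\tau_{m+1}(\sigma_{v})(v\wedge b).
\]
The key step is the evaluation $\tau_{m+1}(\sigma_{v})(v\wedge b)=-\,v\wedge b$. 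By the Proposition above, $\tau_{m+1}(\sigma_{v})(v\wedge b)=v\wedge b-v\wedge\partial(v)(v\wedge b)$; by Lemma~\ref{propDa} with $b_{0}=v$ and the normalization $|v|^{2}=2$ we have $\partial(v)(v\wedge b)=2b-v\wedge\partial(v)b$, so wedging on the left by $v$ annihilates the $v\wedge v$ term and leaves $v\wedge\partial(v)(v\wedge b)=2\,v\wedge b$, whence $\tau_{m+1}(\sigma_{v})(v\wedge b)=-\,v\wedge b$.

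Substituting back and using $\sum_{v\in R_{+}}v_{i}v=\sum_{j}\big(\sum_{v\in R_{+}}v_{i}v_{j}\big)u_{j}=\gamma u_{i}$ from Lemma~\ref{formabR} gives $\mathcal{D}_{i}(\mathbf{x}\wedge b)=(1-\gamma\kappa)\,u_{i}\wedge b$, and contracting with $a=\sum_{i}a_{i}u_{i}$ yields $\sum_{i}a_{i}\mathcal{D}_{i}(\mathbf{x}\wedge b)=(1-\gamma\kappa)\,a\wedge b$. Since $\mathbf{x}\wedge b$ is singular precisely when all $\mathcal{D}_{i}(\mathbf{x}\wedge b)$ vanish and $u_{i}\wedge b\neq0$ for some $i$ when $b\neq0$, this forces $1-\gamma\kappa=0$, i.e., $\kappa=1/\gamma$, and conversely that value makes each $\mathcal{D}_{i}(\mathbf{x}\wedge b)$ zero. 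The argument is essentially bookkeeping; the one delicate point I anticipate is keeping the exterior‑algebra signs straight in $\tau_{m+1}(\sigma_{v})(v\wedge b)=-v\wedge b$, which hinges on applying Lemma~\ref{propDa} with the reflecting vector $v$ itself playing the role of $b_{0}$ and on the normalization $|v|^{2}=2$; I do not expect a genuine obstacle beyond that.
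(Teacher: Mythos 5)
Your proposal is correct and follows essentially the same route as the paper: expand via the Dunkl operator, reduce $\tau(\sigma_v)(v\wedge b)$ to $-v\wedge b$ using the Proposition and Lemma~\ref{propDa} with $|v|^{2}=2$, and finish with $\sum_{v\in R_{+}}v_iv_j=\gamma\delta_{ij}$ from Lemma~\ref{formabR}. The only cosmetic difference is that you compute each $\mathcal{D}_i(\mathbf{x}\wedge b)=(1-\gamma\kappa)\,u_i\wedge b$ before contracting with $a$, whereas the paper carries the contraction with $a$ through the whole calculation.
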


\begin{proof}Suppose $a\in V$ then
\begin{gather*}
\sum_{i=1}^{N}a_{i}\mathcal{D}_{i}( \mathbf{x}\wedge b) =\sum_{i=1}^{N}a_{i}u_{i}\wedge b+\kappa\sum_{v\in R_{+}}\sum_{j=1}^{N}\langle
a,v\rangle v_{j}\tau_{m}(\sigma_{v}) ( u_{j}\wedge b) \\
\hphantom{\sum_{i=1}^{N}a_{i}\mathcal{D}_{i}( \mathbf{x}\wedge b)}{} =a\wedge b+\kappa\sum_{v\in R_{+}}\langle a,v\rangle\tau_{m}(
\sigma_{v}) ( v\wedge b) \\
\hphantom{\sum_{i=1}^{N}a_{i}\mathcal{D}_{i}( \mathbf{x}\wedge b)}{} =a\wedge b+\kappa\sum_{v\in R_{+}}\langle a,v\rangle ( v\wedge
b-v\wedge\partial(v) ( v\wedge b)) \\
\hphantom{\sum_{i=1}^{N}a_{i}\mathcal{D}_{i}( \mathbf{x}\wedge b)}{} =a\wedge b+\kappa\sum_{v\in R_{+}}\langle a,v\rangle ( v\wedge
b-\langle v,v\rangle v\wedge b+v\wedge v\wedge\partial(v) ) \\
\hphantom{\sum_{i=1}^{N}a_{i}\mathcal{D}_{i}( \mathbf{x}\wedge b)}{}
=a\wedge b-\kappa\sum_{v\in R_{+}}\langle a,v\rangle v\wedge b= (1-\gamma\kappa) a\wedge b,
\end{gather*}
because $\langle v,v\rangle=2$ and $\sum\limits_{v\in R_{+}}\langle a,v\rangle v=\sum\limits_{i,j=1}^{N}\sum\limits_{v\in R_{+}}a_{i}v_{i}v_{j}u_{j} =\gamma\sum\limits_{i=1}^{N}a_{i}u_{i}=\gamma a$.
\end{proof}

\begin{Theorem}Suppose $a\in V$, $b\in\wedge^{m}(V) $ then
\begin{gather*}
\partial ( \mathbf{x}) b\in\mathcal{P}_{1}\otimes\wedge^{m-1}(V) , \qquad \sum\limits_{i=1}^{N}a_{i}\mathcal{D}_{i}\partial ( \mathbf{x}) b= ( 1+\gamma\kappa ) \partial(a) b,
\end{gather*} and $\partial ( \mathbf{x} ) b$ is singular for $\kappa=-1/\gamma$.
\end{Theorem}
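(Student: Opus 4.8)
The plan is to run the mirror image of the proof of the preceding theorem, with the contraction $\partial(\mathbf{x})$ playing the role that wedging with $\mathbf{x}$ played there. Since $\partial(a)$ is linear in $a\in V$, the bilinear extension $\partial(\mathbf{x}):=\sum_{j=1}^{N}x_{j}\,\partial(u_{j})$ is well defined and maps $\wedge^{m}(V)$ into $\mathcal{P}_{1}\otimes\wedge^{m-1}(V)$; explicitly $\partial(\mathbf{x})b=\sum_{j=1}^{N}x_{j}\otimes\partial(u_{j})b$, which is the first assertion.

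Next I would compute $\sum_{i=1}^{N}a_{i}\mathcal{D}_{i}\partial(\mathbf{x})b$ term by term. Each $\partial(u_{j})b\in\wedge^{m-1}(V)$ is constant in $x$, so the definition of $\mathcal{D}_{i}$ together with $\frac{\partial x_{j}}{\partial x_{i}}=\delta_{ij}$ and the identity $\frac{x_{j}-(x\sigma_{v})_{j}}{\langle x,v\rangle}=v_{j}$ recorded just before the theorem gives
\[
\mathcal{D}_{i}\big(x_{j}\otimes\partial(u_{j})b\big)=\delta_{ij}\,\partial(u_{j})b+\kappa\sum_{v\in R_{+}}v_{i}v_{j}\,\tau_{m-1}(\sigma_{v})\partial(u_{j})b .
\]
Multiplying by $a_{i}$, summing over $i$ and $j$, and collapsing $\sum_{j}a_{j}\partial(u_{j})=\partial(a)$ and $\sum_{j}v_{j}\partial(u_{j})=\partial(v)$ by linearity of $\partial$, I arrive at
\[
\sum_{i=1}^{N}a_{i}\mathcal{D}_{i}\partial(\mathbf{x})b=\partial(a)b+\kappa\sum_{v\in R_{+}}\langle a,v\rangle\,\tau_{m-1}(\sigma_{v})\partial(v)b .
\]

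Finally I would simplify the sum over $R_{+}$. Applying the relation $\tau_{m-1}(\sigma_{v})c=c-v\wedge\partial(v)c$ (the Proposition above with $m$ replaced by $m-1$) with $c=\partial(v)b$, and using $\partial(v)^{2}=0$ on $\wedge^{m}(V)$ — which follows from Lemma~\ref{propDa} by peeling one factor at a time off a decomposable $b$ — yields $\tau_{m-1}(\sigma_{v})\partial(v)b=\partial(v)b$. Hence the sum equals $\partial\big(\sum_{v\in R_{+}}\langle a,v\rangle v\big)b=\gamma\,\partial(a)b$, the last equality being the computation $\sum_{v\in R_{+}}\langle a,v\rangle v=\gamma a$ already used for $\mathbf{x}\wedge b$. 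This gives $\sum_{i=1}^{N}a_{i}\mathcal{D}_{i}\partial(\mathbf{x})b=(1+\gamma\kappa)\partial(a)b$. Setting $\kappa=-1/\gamma$ annihilates the right-hand side for every $a\in V$, and taking $a=u_{i}$ for $i=1,\dots,N$ shows $\mathcal{D}_{i}\partial(\mathbf{x})b=0$, so $\partial(\mathbf{x})b$ is singular; it is nonzero for generic $b$, e.g.\ for $b=b_{1}\wedge\cdots\wedge b_{m}$ with the $b_{i}$ linearly independent. I expect the only mildly delicate points to be keeping the exterior degrees and the representations $\tau_{m}$, $\tau_{m-1}$ straight in the middle display, and the observation that $\partial(v)^{2}=0$ on all of $\wedge^{m}(V)$ rather than only on decomposable wedges; the rest is a routine transcription of the argument for $\mathbf{x}\wedge b$.
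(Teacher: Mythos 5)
Your proposal is correct and follows essentially the same route as the paper: compute $\sum_i a_i\mathcal{D}_i\partial(\mathbf{x})b$, reduce the reflection terms to $\kappa\sum_{v\in R_+}\langle a,v\rangle\,\tau(\sigma_v)\partial(v)b$, use the identity $\tau(\sigma_v)c=c-v\wedge\partial(v)c$ together with $\partial(v)^2=0$ (Lemma~\ref{propDa}), and finish with $\sum_{v\in R_+}\langle a,v\rangle\partial(v)b=\gamma\,\partial(a)b$. Your bookkeeping via $\partial(\mathbf{x})=\sum_j x_j\partial(u_j)$ instead of expanding a decomposable $b$ is only a cosmetic variant (and you correctly write $\tau_{m-1}$ where the paper loosely writes $\tau_m$), so no substantive difference.
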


\begin{proof}Assume $b=b_{1}\wedge\cdots\wedge b_{m}$ with $b_{1},\ldots,b_{m}\in V$. Then
\begin{gather*}
\sum_{i=1}^{N}a_{i}\mathcal{D}_{i}\partial(\mathbf{x}) b
=\sum_{i=1}^{N}a_{i}\frac{\partial}{\partial x_{i}}\sum_{j=1}^{m} ( -1) ^{j-1}\langle x,b_{j}\rangle\otimes \big( b_{1}\wedge\cdots
\wedge\widehat{b_{j}}\wedge\cdots \big) \\
\hphantom{\sum_{i=1}^{N}a_{i}\mathcal{D}_{i}\partial(\mathbf{x}) b}{} +\kappa\sum_{v\in R_{+}}\langle a,v\rangle\sum_{j=1}^{m}(-1)
^{j-1}\langle v,b_{j}\rangle\tau_{m}(\sigma_{v}) \big(b_{1}\wedge\cdots\wedge\widehat{b_{j}}\wedge\cdots \big) \\
\hphantom{\sum_{i=1}^{N}a_{i}\mathcal{D}_{i}\partial(\mathbf{x}) b}{}
 =\sum_{j=1}^{m}(-1) ^{j-1}\langle a,b_{j}\rangle \otimes \big( b_{1}\wedge\cdots\wedge\widehat{b_{j}}\wedge\cdots \big)
+\kappa\sum_{v\in R_{+}}\langle a,v\rangle\tau_{m}(\sigma_{v}) ( \partial(v) b ) \\
\hphantom{\sum_{i=1}^{N}a_{i}\mathcal{D}_{i}\partial(\mathbf{x}) b}{} =\partial(a) b+\kappa\sum_{v\in R_{+}}\langle a,v\rangle
\big( \partial(v) b-v\wedge\partial(v) ^{2}b\big) = ( 1+\gamma\kappa ) \partial(a)b,
\end{gather*}
because $\partial(v) ^{2}b=0$ (Lemma \ref{propDa}) and
\begin{gather*}
\sum_{v\in R_{+}}\langle a,v\rangle\partial(v) b =\sum_{j=1}^{m}(-1) ^{j-1}\sum_{v\in R_{+}}\langle a,v\rangle\langle
b_{j},v\rangle \big( b_{1}\wedge\cdots\wedge\widehat{b_{j}}\wedge \cdots\big) =\gamma\partial(a) b.\tag*{\qed}
\end{gather*} \renewcommand{\qed}{}
\end{proof}

Here is a table with data on the indecomposable groups with one conjugacy class of reflections. The subscripts indicate the rank $N$ of the group
\begin{gather*}
\left\vert
\begin{matrix}
W & I_{2} ( 2k+1 ) & A_{N} & H_{3} & H_{4} & E_{6} & E_{7} &
E_{8}\\
\#R_{+} & 2k+1 & \frac{N ( N+1 ) }{2} & 15 & 60 & 36 & 63 & 120\\
d_{N} & 2k+1 & N+1 & 10 & 30 & 12 & 18 & 30
\end{matrix}
\right\vert .
\end{gather*}
Thus $\gamma=\frac{2\#R_{+}}{N}=d_{N}$, the largest fundamental degree, also called the Coxeter number (see \cite[Section~3.18]{Humphreys1990}).

Considering the known situation for the symmetric groups and for the reflection representation it appears that for a large collection of representations $\tau$ of degree greater than one that the interval $z_{0}^{\prime}<\kappa<z_{0}$ of positivity is symmetric, $z_{0}^{\prime}=-z_{0}$, and $z_{0}\geq\frac{1}{d_{N}}$. However this is not always the case: there is a nonsymmetric positivity interval arising in two degree~$3$ representations of the icosahedral group~$H_{3}$ (see~\cite{Dunkl2018}).

\section{Representations of the symmetric groups}

The \textit{symmetric group} $\mathcal{S}_{N}$, the set of permutations of $\{ 1 ,2 ,\ldots,N\} $, acts on $\mathbb{C}^{N}$ by permutation of coordinates. The space of polynomials $\mathcal{P}:=\operatorname{span}_{\mathbb{R} (\kappa) } \big\{ x^{\alpha} \colon \alpha\in\mathbb{N}_{0}^{N}\big\} $ where $\kappa$ is a parameter. The action of $\mathcal{S}_{N}$ is extended to polynomials by $w p (x) =p ( x w) $ where $( x w) _{i} =x_{w(i) }$ (consider~$x$ as a row vector and~$w$ as a permutation matrix, $[w] _{i j} =\delta_{i ,w (j) }$, then $x w =x [w] $). This is a representation of $\mathcal{S}_{N}$, that is, $w_{1} ( w_{2} p) (x) =( w_{2} p) ( x w_{1}) =p ( x w_{1} w_{2}) =( w_{1}w_{2}) p (x) $ for all $w_{1} ,w_{2} \in\mathcal{S}_{N}$.

Furthermore $\mathcal{S}_{N}$ is generated by reflections in the mirrors $ \{ x \colon x_{i} =x_{j} \} $ for $1 \leq i <j \leq N$. These are
\textit{transpositions, }denoted by $(i ,j) $, so that $x(i,j) $ denotes the result of interchanging $x_{i}$ and $x_{j}$. Define the $\mathcal{S}_{N}$-action on $\alpha\in\mathbb{Z}^{N}$ so that $( x w) ^{\alpha} =x^{w \alpha}$
\begin{gather*}
(x w) ^{\alpha} =\prod_{i =1}^{N}x_{w (i)}^{\alpha_{i}} =\prod_{j =1}^{N}x_{j}^{\alpha_{w^{ -1} (j) }},
\end{gather*}
that is $(w \alpha) _{i} =\alpha_{w^{ -1} (i) }$.

The \textit{simple reflections} $s_{i}:= ( i ,i +1 ) $, $1 \leq i \leq N -1$, generate~$\mathcal{S}_{N}$. They are the key devices for applying
inductive methods, and satisfy the \textit{braid} relations:
\begin{gather*}
s_{i} s_{j} =s_{j} s_{i} ,\qquad \vert i -j \vert \geq2 ,\qquad s_{i} s_{i +1} s_{i} =s_{i +1} s_{i} s_{i +1}.
\end{gather*}

We consider the situation where the group $\mathcal{S}_{N}$ acts on the range as well as on the domain of the polynomials. We use vector spaces, called $\mathcal{S}_{N}$-modules, on which $\mathcal{S}_{N}$ has an irreducible orthogonal representation $\tau\colon \mathcal{S}_{N} \rightarrow O_{m} ( \mathbb{R}) $ ($\tau(w) ^{ -1} =\tau\big( w^{-1}\big) =\tau(w) ^{T}$). See James and Kerber~\cite{James/Kerber2009} for representation theory, including a modern discussion of Young's methods.

Denote the set of \textit{partitions}
\begin{gather*}
\mathbb{N}_{0}^{N , +}:=\big\{ \lambda\in\mathbb{N}_{0}^{N} \colon \lambda_{1} \geq\lambda_{2} \geq\cdots\geq\lambda_{N}\big\} .
\end{gather*}
We identify $\tau$ with a partition of $N$ given the same label, that is $\tau\in\mathbb{N}_{0}^{N , +}$ and $ \vert \tau \vert =N$. The
length of $\tau$ is $\ell(\tau) :=\max \{ i \colon \tau_{i} >0 \} $. There is a Ferrers diagram of shape $\tau$ (also given the same
label), with boxes at points $(i,j) $ with $1 \leq i \leq \ell(\tau) $ and $1 \leq j \leq\tau_{i}$. A \textit{tableau} of shape $\tau$ is a filling of the boxes with numbers, and a \textit{reverse standard Young tableau} (RSYT) is a filling with the numbers $\{ 1 ,2,\ldots,N\}$ so that the entries decrease in each row and each column.

\begin{Definition}The \textit{hook-length} of the node $(i,j) \in\tau$ is defined to be
\begin{gather*}
h (i,j) :=\tau_{i} -j +\# \{ k \colon i <k \leq\ell ( \tau ) ,\, j \leq\tau_{k} \} +1 ,
\end{gather*}
and the maximum hook-length is $h_{\tau}:=h ( 1 ,1 ) =\tau_{1} +\ell(\tau) -1$.
\end{Definition}

Denote the set of RSYT's of shape $\tau$ by $\mathcal{Y}(\tau) $ and let
\begin{gather*}
V_{\tau}=\operatorname{span}_{\mathbb{R}( \kappa ) } \{ T\colon T\in\mathcal{Y}(\tau) \}
\end{gather*}
with orthogonal basis $\mathcal{Y}(\tau) $. For $1\leq i\leq N$ and $T\in\mathcal{Y}(\tau) $ the entry $i$ is at coordinates $(\operatorname{row}(i,T),\operatorname{col}(i,T)) $ and the \textit{content} is $c(i,T) :=\operatorname{col}(i,T ) -\operatorname{row}(i,T) $. Each $T\in\mathcal{Y}(\tau) $ is uniquely determined by its \textit{content vector} $[c(i,T)] _{i=1}^{N}$. There is an irreducible representation of $\mathcal{S}_{N}$ on~$V_{\tau}$ also denoted by~$\tau$ (slight abuse of notation). To specify the action of $\tau$ it suffices for our purposes to give only the formulae for $\tau(s_{i})$:
\begin{enumerate}\itemsep=0pt
\item[1)] $\operatorname{row}(i,T) =\operatorname{row}(i+1,T) $ (implying $\operatorname{col}(i,T) =\operatorname{col}(i+1,T) +1$ and $c(i,T) -c(i+1,T) =1$) then
\begin{gather*}
\tau ( s_{i} ) T=T;
\end{gather*}
\item[2)] $\operatorname{col}(i,T) =\operatorname{col}(i+1,T) $ (implying $\operatorname{row}(i,T) =\operatorname{row}(i+1,T) +1$ and
$c(i,T) -c(i+1,T) =-1$) then
\begin{gather*}
\tau ( s_{i} ) T=-T;
\end{gather*}
\item[3)] $\operatorname{row}(i,T) < \operatorname{row}(i+1,T) $ and $\operatorname{col}(i,T)>\operatorname{col}(i+1,T) $. In this case
\begin{gather*}
c(i,T) -c(i+1,T) = (\operatorname{col}(i,T) - \operatorname{col}(i+1,T) ) + ( \operatorname{row} (i+1,T) -\operatorname{row}(i,T)) \geq2,
\end{gather*}
and $T^{(i) }$, denoting the tableau obtained from $T$ by exchanging $i$ and $i+1$, is an element of $\mathcal{Y}(\tau) $ and
\begin{gather*}
\tau ( s_{i} ) T=T^{(i) }+\frac{1}{c (i,T) -c(i+1,T) }T,
\end{gather*}
\item[4)] $c(i,T) -c(i+1,T) \leq-2$, thus $\operatorname{row}(i,T) >\operatorname{row}(i+1,T) $ and $\operatorname{col}(i,T)
<\operatorname{col}(i+1,T) $ then with $b=c (i,T) -c(i+1,T) $,
\begin{gather*}
\tau ( s_{i} ) T=\left( 1-\frac{1}{b^{2}}\right) T^{(i) }+\frac{1}{b}T.
\end{gather*}
\end{enumerate}

The formulas in (4) are consequences of those in (3) by interchanging $T$ and $T^{(i) }$ and applying the relations $\tau(s_{i})^{2}=I$ (where $I$ denotes the identity operator on~$V_{\tau}$). The $\mathcal{S}_{N}$-invariant inner product on $V_{\tau}$ is defined by
\begin{gather*}
\langle T,T^{\prime}\rangle_{0}:=\delta_{T,T^{\prime}}\times\prod _{\substack{1\leq i<j\leq N,\\c(i,T) \leq c (j,T) -2}}\left( 1-\frac{1}{( c(i,T) -c ( j,T) ) ^{2}}\right) ,\qquad T,T^{\prime}\in\mathcal{Y}(\tau) .
\end{gather*}
It is unique up to multiplication by a constant.

The \textit{Jucys--Murphy} elements $\omega_{i}:=\sum\limits_{j=i+1}^{N}(i,j) $ satisfy $\sum\limits_{j=i+1}^{N}\tau( ( i,j)) T=c(i,T) T$. Thus $\sum\limits_{1\leq i<j\leq N} \tau ( (i,j) ) $ acts on $V_{\tau}$ as multiplication by
\begin{gather*}
\varepsilon(\tau) =\sum\limits_{j=1}^{N}c(j,T) =\frac{1}{2}\sum\limits_{i=1}^{\ell(\tau) }\tau_{i} (\tau_{i}-2i+1 )
\end{gather*}
(independent of $T\in\mathcal{Y}(\tau)$).

\section{Vector-valued Jack polynomials}

For a given partition $\tau$ of $N$ there is a space of vector-valued nonsymmetric Jack polynomials, also called a standard module of the rational Cherednik algebra. The nonsymmetric vector-valued Jack polynomials (NSJP) form a basis of $\mathcal{P}_{\tau} =\mathcal{P} \otimes\mathcal{V}_{\tau}$, the space of $V_{\tau}$ valued polynomials in~$x$, equipped with the $\mathcal{S}_{N}$ action
\begin{gather*}
w \big( x^{\alpha} \otimes T\big) :=(x w) ^{\alpha} \otimes\tau(w) T ,\qquad \alpha\in\mathbb{N}_{0}^{N} ,\qquad T \in\mathcal{Y} (\tau) ,
\end{gather*}
which is extended by linearity to
\begin{gather*}
w p (x) =\tau(w) p (x w) ,\qquad p \in\mathcal{P}_{\tau}.
\end{gather*}

\begin{Definition}The \textit{Dunkl} and \textit{Cherednik--Dunkl} operators are ($1 \leq i \leq N ,p \in\mathcal{P} ,T \in\mathcal{Y} (\tau) $)
\begin{gather*}
\mathcal{D}_{i} ( p (x) \otimes T ) :=\frac{\partial p (x) }{ \partial x_{i}} \otimes T +\kappa\sum_{j \neq
i}\frac{p (x) -p ( x (i,j) ) }{x_{i} -x_{j}} \otimes\tau ( (i,j) ) T,\\
\mathcal{U}_{i} ( p (x) \otimes T ) :=\mathcal{D}_{i} ( x_{i} p (x) \otimes T ) -\kappa\sum_{j =1}^{i -1}p ( x (i,j) ) \otimes
\tau ( (i,j) ) T,
\end{gather*}
extended by linearity to all of $\mathcal{P}_{\tau}$.
\end{Definition}

The commutation relations analogous to the scalar case hold, that is,
\begin{gather*}
\mathcal{D}_{i} \mathcal{D}_{j} =\mathcal{D}_{j} \mathcal{D}_{i},\qquad \mathcal{U}_{i} \mathcal{U}_{j} =\mathcal{U}_{j} \mathcal{U}_{i} ,\qquad 1 \leq i,j \leq N,\\
w \mathcal{D}_{i} =\mathcal{D}_{w (i) } w , \qquad \forall\, w \in\mathcal{S}_{N}, \qquad s_{j} \mathcal{U}_{i} =\mathcal{U}_{i} s_{j} ,\qquad j \neq i -1 ,i ,\\
s_{i} \mathcal{U}_{i} s_{i} =\mathcal{U}_{i +1} +\kappa s_{i},\qquad \mathcal{U}_{i} s_{i} =s_{i} \mathcal{U}_{i +1} +\kappa, \qquad \mathcal{U}_{i +1} s_{i} =s_{i} \mathcal{U}_{i} -\kappa.
\end{gather*}
The simultaneous eigenfunctions of $ \{ \mathcal{U}_{i} \} $ are called (vector-valued) nonsymmetric Jack polynomials (NSJP). For generic $\kappa$ these eigenfunctions form a basis of $\mathcal{P}_{\tau}$ (\textit{generic} means that $\kappa\neq\frac{m}{n}$ where $m ,n \in \mathbb{Z}$ and $1 \leq n \leq N$). They have a triangularity property with respect to the partial order~$\rhd$ on compositions, which is derived from the dominance order:
\begin{gather*}
\alpha \prec\beta \Longleftrightarrow\sum_{j =1}^{i}\alpha_{j} \leq\sum_{j =1}^{i}\beta_{j} ,\qquad 1 \leq i \leq N ,\qquad \alpha\neq\beta,\\
\alpha\lhd\beta \Longleftrightarrow ( \vert \alpha \vert = \vert \beta \vert ) \wedge\big[ \big( \alpha^{ +}\prec\beta^{ +}\big) \vee\big( \alpha^{ +} =\beta^{ +} \wedge\alpha \prec\beta\big) \big] .
\end{gather*}
There is a subtlety in the leading terms, which relies on the \textit{rank} function:

\begin{Definition}For $\alpha\in\mathbb{N}_{0}^{N}$, $1 \leq i \leq N$
\begin{gather*}
r_{\alpha} (i) =\# \{ j \colon \alpha_{j} >\alpha_{i} \} +\# \{ j \colon 1 \leq j \leq i ,\, \alpha_{j} =\alpha_{i} \} ,
\end{gather*}
then $r_{\alpha} \in\mathcal{S}_{N}$.
\end{Definition}

A consequence is that $r_{\alpha} \alpha=\alpha^{ +}$, the nonincreasing rearrangement of $\alpha$, for any $\alpha\in\mathbb{N}_{0}^{N}$. For example if $\alpha=( 1 ,2 ,1 ,5 ,4) $ then $r_{\alpha} =[ 4 ,3 ,5,1 ,2] $ and $r_{\alpha} \alpha=\alpha^{ +} =( 5 ,4 ,2 ,1,1) $ (recall $w \alpha_{i} =\alpha_{w^{ -1} ( i) }$). Also $r_{\alpha} =I$ if and only if $\alpha$ is a partition ($\alpha_{1} \geq\alpha_{2} \geq\cdots\geq\alpha_{N}$).

\looseness=-1 For each $\alpha\in\mathbb{N}_{0}^{N}$ and $T \in\mathcal{Y} (\tau) $ there is a NSJP $\zeta_{\alpha,T}$ with leading term $x^{\alpha} \otimes\tau\big( r_{\alpha}^{ -1}\big) T$, that~is,
\begin{gather*}
\zeta_{\alpha,T} =x^{\alpha} \otimes\tau\big( r_{\alpha}^{ -1}\big) T +\sum_{\alpha\rhd\beta}x^{\beta} \otimes t_{\alpha\beta} (\kappa) , \qquad t_{\alpha\beta} (\kappa) \in V_{\tau},\\
\mathcal{U}_{i} \zeta_{\alpha,T} = ( \alpha_{i} +1 +\kappa c (r_{\alpha} (i) ,T)) \zeta_{\alpha,T} ,\qquad 1 \leq i \leq N.
\end{gather*}
The list of eigenvalues is called the \textit{spectral vector} $\xi_{\alpha ,T}:=[ \alpha_{i} +1 +\kappa c ( r_{\alpha} (i),T)] _{i =1}^{N}$.

The NSJP's can be constructed by means of a Yang--Baxter graph. The details are in~\cite{Dunkl/Luque2011}; this paper has several figures illustrating some typical graphs.

A node consists of
\begin{gather*}
 ( \alpha,T ,\xi_{\alpha.T} ,r_{\alpha} ,\zeta_{\alpha,T} ),
\end{gather*}
where $\alpha\in\mathbb{N}_{0}^{N}$,$T \in\mathcal{Y} (\tau) ,$ $\xi_{\alpha,T}$ is the spectral vector. The root is
\begin{gather*}
 \big( \mathbf{0} ,T_{0} , [ 1 +\kappa c (i,T_{0})] _{i =1}^{N} ,I ,1 \otimes T_{0}\big),
\end{gather*}
where $T_{0}$ is formed by entering $N ,N -1 ,\ldots,1$ column-by-column in the Ferrers diagram. Proofs by induction in this context typically rely on sequences of applications of $ \{ \tau(s_{i}) \} $ and the inversion number: for $T \in\mathcal{Y} (\tau) $ set
\begin{gather*}
\operatorname{inv} (T) :=\# \{ ( i,j)\colon i <j , \, c (i,T) -c (j,T)\geq2\} .
\end{gather*}
If for particular $T$ and $i$ the relation $c (i,T) -c (i +1 ,T) \geq2$ holds then $T^{(i) }$, the tableau formed by interchanging $i$ and $i+1$ in $T$ is also a RSYT (the relation is equivalent to $\operatorname{row} (i,T) <\operatorname{row}( i +1 ,T) $ and $\operatorname{col}( i,T ) >\operatorname{col}(i +1,T)$; the RSYT property implies that these two inequalities are logically equivalent). In this case
$\operatorname{inv} \big( T^{(i) }\big) =\operatorname{inv} (T) -1$. The inv-maximal tableau is $T_{0}$ and the inv-minimal tableau is $T_{1}$ formed by entering $N ,N -1 ,\ldots,1$ row-by-row.

Steps in the YB-graph correspond to $\tau(s_{i}) $. There are several cases; we start with the situation $\alpha_{i}=\alpha_{i+1}$. These formulae restricted to $1\otimes T$ (so $\alpha=\mathbf{0}$) are equivalent to the definition of the representation $\tau$ on $V_{\tau}$. Throughout the hypotheses are $\alpha\in\mathbb{N}_{0}^{N}$, $T\in\mathcal{Y}(\tau)$, $1\leq i<N$.

\begin{Case}\label{ai=ai1}$\alpha_{i}=\alpha_{i+1}$; define $j:=r_{\alpha}(i) $ implying $r_{\alpha}(i+1) =j+1$
\begin{enumerate}\itemsep=0pt
\item[1)] $\operatorname{row}(j,T) =\operatorname{row}(j+1,T) $ then $s_{i}\zeta_{\alpha,T}=\zeta_{\alpha,T}$;
\item[2)] $\operatorname{col}(j,T) =\operatorname{col}(j+1,T) $ then $s_{i}\zeta _{\alpha,T}=-\zeta_{\alpha,T}$;
\item[3)] $\operatorname{row} (j,T) <\operatorname{row}(j+1,T) $ (thus $c(j,T) -c(j+1,T) \geq2$) set
\begin{gather*}
b^{\prime}:=\frac{1}{c(j,T) -c(j+1,T) }=\frac{\kappa}{\xi_{\alpha,T}(i) -\xi_{\alpha,T}(i+1) },
\end{gather*}
then there is a \textit{step}
\begin{gather*}
( \alpha,T,\xi_{\alpha,T},r_{\alpha},\zeta_{\alpha,T} ) \overset{s_{i}}{\longrightarrow}\big( \alpha,T^{(j) },s_{i}\xi_{\alpha,T},r_{\alpha},\zeta_{\alpha,T^{(j) }}\big),\qquad
\zeta_{\alpha,T^{(j) }}=s_{i}\zeta_{\alpha,T}-b^{\prime}\zeta_{\alpha,T},
\end{gather*}
\end{enumerate}
Note $0<b^{\prime}\leq\frac{1}{2}$ and $\tau ( s_{j} ) T=T^{(j) }+b^{\prime}T$; furthermore the leading term is transformed $s_{i}\big( x^{\alpha}\otimes\tau\big( r_{\alpha}^{-1}\big) T\big) =( xs_{i}) ^{\alpha}\otimes\tau\big( s_{i}r_{\alpha}^{-1}\big) T=x^{\alpha}\otimes\tau\big( r_{\alpha}^{-1}\big) \tau ( s_{j} ) T$ because $s_{i}r_{\alpha}^{-1}=r_{\alpha}^{-1}s_{j}$. The reciprocal relation is
\begin{gather*}
s_{i}\zeta_{\alpha,T^{(j) }}=-b^{\prime}\zeta_{\alpha,T^{(j) }}+\big( 1-b^{\prime2}\big) \zeta_{\alpha,T}.
\end{gather*}
\end{Case}

\begin{Case} $\alpha_{i +1} >\alpha_{i}$, then with
\begin{gather*}
b:=\frac{\kappa}{\xi_{\alpha,T} (i) -\xi_{\alpha,T} ( i +1 ) }
\end{gather*}
there is a \textit{step}
\begin{gather*}
( \alpha,T ,\xi_{\alpha,T} ,r_{\alpha} ,\zeta_{\alpha,T} ) \overset{s_{i}}{ \longrightarrow} ( s_{i} \alpha,T ,s_{i} \xi_{\alpha,T}
,r_{\alpha} s_{i} ,\zeta_{s_{i} \alpha,T} ),\qquad
\zeta_{s_{i} \alpha,T} =s_{i} \zeta_{\alpha,T} -b \zeta_{\alpha,T},
\end{gather*}
and the reciprocal relation is
\begin{gather*}
s_{i} \zeta_{s_{i} \alpha,T} = -b \zeta_{s_{i} \alpha,T} +\big( 1 -b^{2}\big) \zeta_{\alpha,T}.
\end{gather*}
\end{Case}

The reciprocal relations are derived from $s_{i}^{2} =1$. With the aim of letting $\kappa$ take on certain rational values we examine the possible poles in the step rules arising from the factors
\begin{gather*}
\xi_{\alpha,T} (i) -\xi_{\alpha,T} (i +1) =\alpha_{i} -\alpha_{i +1} +\kappa( c ( r_{\alpha} (i) ,T) -c ( r_{\alpha} (i +1) ,T)) .
\end{gather*}
The extreme values of $c (\cdot,T) $ are $\tau_{1} -1$ and $1 -\ell(\tau) $, and thus
\begin{gather*}
 \vert c ( r_{\alpha} (i) ,T ) -c (r_{\alpha} (i +1) ,T ) \vert \leq h_{\tau} -1.
\end{gather*}
Hence $-\frac{1}{h_{\tau} -1} <\kappa<\frac{1}{h_{\tau} -1}$ and $\alpha_{i} \neq\alpha_{i +1}$ imply $\xi_{\alpha,T} (i) -\xi_{\alpha,T}(i +1) \neq0$ (in case $\alpha_{i} =\alpha_{i +1}$ the bound $0 <b^{ \prime} \leq\frac{1}{2}$ applies).

The other links in the YB-graph are degree-raising (affine) operations. Define
\begin{gather*}
\Phi ( a_{1} ,a_{2} ,\ldots,a_{N} ) := ( a_{2} ,a_{3} ,\ldots,a_{N} ,a_{1} +1),\qquad
\theta_{m} :=s_{1} s_{2} \cdots s_{m -1} ,\qquad 2 \leq m \leq N ,
\end{gather*}
so that $\theta_{m}$ is the cyclic permutation $ ( 12 \ldots m ) $. The cycle $\theta_{N}$ interacts with $\Phi$ and the rank function by $r_{\Phi\alpha} =r_{\alpha} \theta_{N}$ (that is, $r_{\alpha} \theta_{N} (i) =r_{\alpha} (i +1) =r_{\Phi\alpha} (i) $ for$~1 \leq i <N$, and $r_{\alpha} \theta_{N} (N) =r_{\alpha} ( 1 ) =r_{\Phi\alpha} (N) $). The \textit{jump} is given by
\begin{gather*}
 ( \alpha,T ,\xi_{\alpha,T} ,r_{\alpha} ,\zeta_{\alpha,T} )
\overset{\Phi}{ \longrightarrow} \big( \Phi\alpha,T ,\Phi\xi_{\alpha,T}
,r_{\alpha} \theta_{N} ,x_{N} \theta_{N}^{ -1} \zeta_{\alpha,T} \big),\qquad
\zeta_{\Phi\alpha,T} =x_{N} \theta_{N}^{ -1} \zeta_{\alpha,T}.
\end{gather*}
The leading term is $x^{\Phi\alpha} \otimes\tau\big( \theta_{N}^{ -1} r_{\alpha}^{ -1}\big) T$ and $\theta_{N}^{ -1} r_{\alpha}^{ -1} = (
r_{\alpha} \theta_{N} ) ^{ -1}$. For example: $\alpha= ( 0 ,2 ,5 ,0) $, $r_{\alpha} =[ 3 ,2 ,1 ,4] $, $\Phi\alpha=( 2,5 ,0 ,1)$, $r_{\Phi\alpha} = [ 2 ,1 ,4 ,3 ] $.

For any $\kappa$ there is a unique bilinear symmetric $\mathcal{S}_{N}$-invariant form on $\mathcal{P}_{\tau}$ which satisfies ($f,g\in \mathcal{P}_{\tau}$):
\begin{gather*}\begin{split}&
\langle1\otimes T,1\otimes T^{\prime}\rangle_{\kappa} =\langle T,T^{\prime }\rangle_{0}, \qquad T,T^{\prime}\in\mathcal{Y}(\tau) ,\\ 
&\langle wf,wg\rangle_{\kappa} =\langle f,g\rangle_{\kappa}, \qquad w\in \mathcal{S}_{N}, \qquad \langle\mathcal{D}_{i}f,g\rangle_{\kappa} =\langle f,x_{i}g\rangle_{\kappa}, \qquad 1\leq i\leq N. \end{split}
\end{gather*}
As a consequence $\langle x_{i}f,x_{i}g\rangle_{\kappa}=\langle\mathcal{D}_{i}x_{i}f,g\rangle_{\kappa}=\langle f,\mathcal{D}_{i}x_{i}g\rangle_{\kappa}$ and $\mathcal{U}_{i}=\mathcal{D}_{i}x_{i}-\kappa\sum\limits_{j=1}^{i-1}(i,j) $ is self-adjoint; furthermore $\langle\zeta_{\alpha,T},\zeta_{\beta,T^{\prime}}\rangle_{\kappa}=0$ whenever $( \alpha,T) \neq( \beta,T^{\prime})$ because $\xi_{\alpha,T}\neq\xi_{\beta,T^{\prime}}$ for generic~$\kappa$. The form is defined in terms of $\langle\zeta_{\alpha,T},\zeta_{\alpha,T}\rangle_{\kappa}$ and is extended by linearity and orthogonality to all polynomials. It is a special case of a~result of Griffeth~\cite{Griffeth2010}. The first ingredient is the formula for $\zeta_{\lambda,T}$ for $\lambda\in\mathbb{N}_{0}^{N,+}$ (the Pochhammer symbol is $(t) _{n}=\prod\limits_{i=1}^{n}(t+i-1) $)
\begin{gather}
\langle\zeta_{\lambda,T},\zeta_{\lambda,T}\rangle_{\kappa} =\langle T,T\rangle_{0}\prod_{i=1}^{N} ( 1+\kappa c(i,T) )_{\lambda_{i}}\nonumber\\
\hphantom{\langle\zeta_{\lambda,T},\zeta_{\lambda,T}\rangle_{\kappa}}{} \times\prod_{1\leq i<j\leq N}\prod_{\ell=1}^{\lambda_{i}-\lambda_{j}
}\left( 1-\left( \frac{\kappa}{\ell+\kappa ( c(i,T) -c(j,T) ) }\right) ^{2}\right) .\label{lbnorm}
\end{gather}
The second ingredient expresses the relationship between $\langle\zeta_{\alpha,T},\zeta_{\alpha,T}\rangle_{\kappa}$ and $\langle\zeta_{\alpha^{+}
,T},\zeta_{\alpha^{+},T}\rangle_{\kappa}$. Let
\begin{gather*}
\mathcal{E}(\alpha,T) :=\prod_{\substack{1\leq i<j\leq N\\\alpha_{i}<\alpha_{j}}}\left( 1-\left( \frac{\kappa}{\alpha_{j}
-\alpha_{i}+\kappa ( c ( r_{\alpha}(j) ,T ) -c ( r_{\alpha}(i) ,T ) ) }\right) ^{2}\right) .
\end{gather*}
Then
\begin{gather}
\langle\zeta_{\alpha,T},\zeta_{\alpha,T}\rangle_{\kappa}=\mathcal{E} (\alpha,T ) ^{-1}\langle\zeta_{\alpha^{+},T},\zeta_{\alpha^{+},T}
\rangle_{\kappa},\qquad \alpha\in\mathbb{N}_{0}^{N},\qquad T\in\mathcal{Y} (\tau ) .\label{JPnorm}
\end{gather}

From the bounds on $c (i,T) -c (j,T) $ and the formulae it follows that $\langle\zeta_{\alpha,T} ,\zeta_{\alpha,T} \rangle_{\kappa} >0$ provided $-\frac{1}{h_{\tau}} <\kappa<\frac{1}{h_{\tau}}$. Denote $\langle f ,f \rangle_{\kappa}$ by $ \Vert f \Vert ^{2}$ for any generic value of $\kappa$ (slight abuse of notation).

\section{Differentiation formulae}

First we prove formulae for $\mathcal{D}_{j} \zeta_{\alpha,T}$ for $\ell(\alpha) \leq j \leq N$ (recall the \textit{length of }$\alpha$ is $\ell(\alpha) :=\max \{ i \colon \alpha_{i} >0 \} $). We need the commutation relations (part of the defining relations of the rational Cherednik algebra), $p \in\mathcal{P}_{\tau}$:
\begin{gather}
\mathcal{D}_{i} ( x_{i} p ) -x_{i} \mathcal{D}_{i} p =p+\kappa\sum_{j =1 ,\, j \neq i}^{N}(i,j) p ,\label{DxiDi}\\
\mathcal{D}_{i} ( x_{j} p ) -x_{j} \mathcal{D}_{i} p = -\kappa(i,j) p ,\qquad i \neq j . \label{DixjD}
\end{gather}
Recall the Jucys--Murphy elements $\omega_{i}:=\sum\limits_{j =i +1}^{N}( i ,j) $ for $1 \leq i <N$ and $\omega_{N}:=0$.

\begin{Proposition}Suppose $p \in\mathcal{P}_{\tau}$ and $1 \leq i \leq N$ then $\mathcal{D}_{i} p =0$ if and only if $\mathcal{U}_{i} p =p +\kappa\omega_{i} p$.
\end{Proposition}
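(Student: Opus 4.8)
The plan is to start from the definition $\mathcal{U}_{i} = \mathcal{D}_{i} x_{i} - \kappa \sum_{j=1}^{i-1}(i,j)$ and rewrite it so that the term $\mathcal{D}_{i}x_{i}$ is expressed via the commutation relation \eqref{DxiDi}. Applying \eqref{DxiDi} to $p$ gives $\mathcal{D}_{i}(x_{i}p) = x_{i}\mathcal{D}_{i}p + p + \kappa\sum_{j\neq i}(i,j)p$. Substituting this into the definition of $\mathcal{U}_{i}$ yields
\begin{gather*}
\mathcal{U}_{i} p = x_{i}\mathcal{D}_{i}p + p + \kappa\sum_{j\neq i}(i,j)p - \kappa\sum_{j=1}^{i-1}(i,j)p = x_{i}\mathcal{D}_{i}p + p + \kappa\sum_{j=i+1}^{N}(i,j)p,
\end{gather*}
where the two sums over $j$ telescope, leaving exactly $\kappa\sum_{j=i+1}^{N}(i,j)p = \kappa\omega_{i}p$ by the definition of the Jucys--Murphy element $\omega_{i}$ (and this holds trivially for $i=N$ since $\omega_{N}=0$ and the sum is empty).

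This identity, $\mathcal{U}_{i}p = x_{i}\mathcal{D}_{i}p + p + \kappa\omega_{i}p$, is valid for every $p\in\mathcal{P}_{\tau}$ with no hypothesis on $p$. The equivalence then follows immediately in both directions. If $\mathcal{D}_{i}p = 0$, the term $x_{i}\mathcal{D}_{i}p$ vanishes and we get $\mathcal{U}_{i}p = p + \kappa\omega_{i}p$. Conversely, if $\mathcal{U}_{i}p = p + \kappa\omega_{i}p$, then comparing with the identity forces $x_{i}\mathcal{D}_{i}p = 0$; since multiplication by $x_{i}$ is injective on $\mathcal{P}_{\tau}$ (it is injective on $\mathcal{P}$ and we tensor with $V_{\tau}$), this gives $\mathcal{D}_{i}p = 0$.

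I do not anticipate a serious obstacle here; the proof is essentially a one-line manipulation. The only point requiring a moment's care is the bookkeeping on the index ranges when the two $\kappa$-sums are combined: the sum $\sum_{j\neq i}$ runs over all $j\in\{1,\ldots,N\}\setminus\{i\}$, while the subtracted sum runs over $j\in\{1,\ldots,i-1\}$, so the difference is precisely the sum over $j\in\{i+1,\ldots,N\}$, which is $\omega_{i}$. One should also note explicitly that the injectivity of multiplication by $x_{i}$ is what makes the converse work, rather than some cancellation that could fail.
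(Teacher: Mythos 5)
Your proof is correct and is essentially the same as the paper's: both substitute the commutation relation \eqref{DxiDi} into the definition of $\mathcal{U}_{i}$ to obtain the identity $\mathcal{U}_{i}p = x_{i}\mathcal{D}_{i}p + p + \kappa\omega_{i}p$, from which the equivalence follows. Your explicit remark that the converse uses injectivity of multiplication by $x_{i}$ is a point the paper leaves implicit, but it is the same argument.
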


\begin{proof}By the above
\begin{gather*}
\mathcal{U}_{i} p =\mathcal{D}_{i} (x_{i} p) -\kappa\sum_{j <i}(i,j) p =x_{i} \mathcal{D}_{i} p +p +\kappa\sum_{j =1 ,j \neq i}^{N}(i,j) p -\kappa\sum_{j <i}(i,j) p\\
\hphantom{\mathcal{U}_{i} p}{} =x_{i} \mathcal{D}_{i} p +p +\kappa\sum_{j =i +1}^{N}(i,j)p.\tag*{\qed}
\end{gather*} \renewcommand{\qed}{}
\end{proof}

\begin{Corollary}\label{dz=zero}Suppose $\alpha\in\mathbb{N}_{0}^{N}$, $T \in\mathcal{Y} (\tau) $ and $\ell(\alpha) <N$ then $\mathcal{D}_{i}
\zeta_{\alpha,T} =0$ for $\ell(\alpha) <i \leq N$.
\end{Corollary}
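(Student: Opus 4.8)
The plan is to use the characterization of singular indices from the preceding Proposition, namely that $\mathcal{D}_{i}\zeta_{\alpha,T}=0$ is equivalent to $\mathcal{U}_{i}\zeta_{\alpha,T}=\zeta_{\alpha,T}+\kappa\omega_{i}\zeta_{\alpha,T}$. First I would fix $\alpha$ with $\ell(\alpha)<N$ and an index $i$ with $\ell(\alpha)<i\leq N$, so that $\alpha_{i}=0$ and in fact $\alpha_{j}=0$ for all $j\geq i$. The key observation is that for such an index, $r_{\alpha}(i)$ equals $N-(N-i)=\ldots$; more precisely, since all coordinates $\alpha_{i},\alpha_{i+1},\dots,\alpha_{N}$ are zero and these are the smallest values in $\alpha$, the rank function on these indices is determined: $r_{\alpha}(j)$ for $i\leq j\leq N$ runs through the top values $\{\ell(\alpha)+1,\dots,N\}$ in increasing order, so in particular $\zeta_{\alpha,T}$ is an eigenfunction of $\mathcal{U}_{i}$ with eigenvalue $\alpha_{i}+1+\kappa c(r_{\alpha}(i),T)=1+\kappa c(r_{\alpha}(i),T)$.

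Next I would compute the action of $\omega_{i}$ on $\zeta_{\alpha,T}$. Since $\alpha_{j}=0$ for all $j\geq i$, the coordinates $x_{i},\dots,x_{N}$ are ``inert'' in the leading term and in fact in all of $\zeta_{\alpha,T}$: the polynomial $\zeta_{\alpha,T}$ lies in the span of monomials $x^{\beta}$ with $\beta_{j}=0$ for $j\geq i$ (this follows from the triangularity $\beta\lhd\alpha$ and the fact that such $\beta$ must have the same content of zeros — or more directly, from the Yang–Baxter construction, where all steps that could introduce $x_{i},\dots,x_{N}$ are never applied when building $\zeta_{\alpha,T}$ from the root). Granting that, each transposition $(i,j)$ with $j>i$ fixes the polynomial part and acts only on the $V_{\tau}$-part via $\tau((i,j))$, so $\omega_{i}\zeta_{\alpha,T}=\sum_{j=i+1}^{N}\tau((i,j))\zeta_{\alpha,T}$. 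But the entries $i,i+1,\dots,N$ occupy precisely the boxes $(1,j)$ for $j$ ranging over a suitable set — actually, using the spectral-vector bookkeeping it is cleaner to note that $\sum_{j=i+1}^{N}\tau((i,j))$ restricted to the relevant subspace acts as multiplication by $c(r_{\alpha}(i),T)$, exactly as the Jucys–Murphy relation $\sum_{j=i+1}^{N}\tau((i,j))T'=c(i,T')T'$ does on $V_{\tau}$ after relabeling.

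Putting these together, $\mathcal{U}_{i}\zeta_{\alpha,T}=(1+\kappa c(r_{\alpha}(i),T))\zeta_{\alpha,T}=\zeta_{\alpha,T}+\kappa c(r_{\alpha}(i),T)\zeta_{\alpha,T}=\zeta_{\alpha,T}+\kappa\omega_{i}\zeta_{\alpha,T}$, which by the Proposition gives $\mathcal{D}_{i}\zeta_{\alpha,T}=0$. The main obstacle I anticipate is justifying cleanly that $\zeta_{\alpha,T}$ really involves no monomials in $x_{i},\dots,x_{N}$ and that $\omega_{i}$ acts on it with eigenvalue $c(r_{\alpha}(i),T)$: this requires either a careful induction along the Yang–Baxter graph (checking that neither the $s_{k}$-steps for $k\geq i$ nor the jump $\Phi$ are ever invoked in the path from the root to $\zeta_{\alpha,T}$, since $\alpha$ has trailing zeros), or an appeal to the fact that $\mathcal{P}\otimes V_{\tau}$ restricted to polynomials in $x_{1},\dots,x_{i-1}$ is itself a module for the parabolic subalgebra and $\zeta_{\alpha,T}$ lies in it. Once that structural fact is in hand, the rest is the short computation above.
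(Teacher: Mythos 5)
Your plan has a genuine gap at its central structural claim. You assert that $\zeta_{\alpha,T}$ lies in the span of monomials $x^{\beta}$ with $\beta_{j}=0$ for $j>\ell(\alpha)$, deducing this from the triangularity $\beta\lhd\alpha$. That deduction is false: the order $\lhd$ compares partial sums (after rearrangement), and compositions $\beta\lhd\alpha$ need not inherit the trailing zeros of $\alpha$. For instance with $\alpha=(1,0,\ldots,0)$ the polynomial $\zeta_{\alpha,T}$ is reached in the Yang--Baxter graph from $\zeta_{(0,\ldots,0,1),T}=x_{N}\otimes\tau\big(\theta_{N}^{-1}\big)T$ by steps of the form $\zeta_{s_{i}\alpha,T}=s_{i}\zeta_{\alpha,T}-b\,\zeta_{\alpha,T}$, so it necessarily contains monomials $x_{2},\ldots,x_{N}$; indeed, if it were of the pure form $x_{1}\otimes v$ one would get $\mathcal{D}_{2}(x_{1}\otimes v)=-\kappa\,\tau((1,2))v\neq0$, contradicting the very statement you are proving. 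The alternative justification you offer (that the jump $\Phi$ and the steps $s_{k}$ with $k\geq i$ are ``never invoked'') fails for the same reason: every jump multiplies by $x_{N}$, and the subsequent steps mix those terms in. Consequently the second half of the argument, that $\omega_{i}\zeta_{\alpha,T}=c(r_{\alpha}(i),T)\,\zeta_{\alpha,T}$, is unsupported; in fact, given the eigenvalue equation $\mathcal{U}_{i}\zeta_{\alpha,T}=(1+\kappa c(r_{\alpha}(i),T))\zeta_{\alpha,T}$, that identity is \emph{equivalent} to $x_i\mathcal{D}_{i}\zeta_{\alpha,T}=0$, so assuming it is essentially circular.

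The paper's proof avoids this trap: it applies the criterion $\mathcal{D}_{i}p=0\Leftrightarrow\mathcal{U}_{i}p=p+\kappa\omega_{i}p$ only at $i=N$, where $\omega_{N}=0$ and $\xi_{\alpha,T}(N)=1+\kappa c(N,T)=1$ (since $r_{\alpha}(N)=N$ and the entry $N$ sits at box $(1,1)$), so $\mathcal{D}_{N}\zeta_{\alpha,T}=0$ comes for free. It then descends by induction using $\mathcal{D}_{m-1}=s_{m-1}\mathcal{D}_{m}s_{m-1}$ together with the Case~\ref{ai=ai1} rules for $\beta_{m-1}=\beta_{m}=0$, which express $s_{m-1}\zeta_{\beta,T'}$ as a combination of polynomials already annihilated by $\mathcal{D}_{m}$. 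If you want to keep your framework, you would need to prove the $\omega_{i}$-eigenvector property by some independent means; the conjugation-plus-induction route is the clean way to do that.
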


\begin{proof}By hypothesis $r_{\alpha} (N) =N$ and $\xi_{\alpha} (N) =1 +\kappa c ( N ,T) =1$; also $( 1 +\kappa\omega_{N}) \zeta_{\alpha,T} =\zeta_{\alpha,T}$ thus $\mathcal{U}_{N} \zeta_{\alpha,T} =\zeta_{\alpha,T} p =\zeta_{\alpha,T} +\kappa\omega_{N} \zeta_{\alpha,T}$ and $\mathcal{D}_{N} \zeta_{\alpha,T} =0$. Proceeding by induction suppose $\mathcal{D}_{i} \zeta_{\beta,T^{ \prime}} =0$ for $m \leq i \leq N$ with $m >\ell(\alpha) +1$ and all $T^{ \prime} \in\mathcal{Y} (\tau) $ and $\beta$ with $\ell (\beta) \leq\ell(\alpha)$. Then $\mathcal{D}_{m -1}
\zeta_{\beta,T^{ \prime}} =s_{m -1} \mathcal{D}_{m} s_{m -1} \zeta_{\beta,T^{\prime}}$. By the transformations in Case~\ref{ai=ai1} (note $\beta_{m -1} =\beta_{m} =0$) $s_{m -1} \zeta_{\beta,T^{ \prime}} = \pm\zeta_{\beta,T^{\prime}}$ or $s_{m -1} \zeta_{\beta,T^{ \prime}}$ is a~linear combination (independent of $\kappa$) of $\zeta_{\beta,T^{ \prime}}$ and $\zeta_{\beta,T^{\prime\prime}}$ where $T^{ \prime\prime}$ is the result of interchanging~$m$ and $m -1$ in $T^{ \prime}$. In any of these cases $\mathcal{D}_{m} s_{m -1} \zeta_{\beta,T^{ \prime}} =0$ by the induction hypothesis.
\end{proof}

Suppose $\ell(\alpha) =m<N$. We turn to the evaluation of $\mathcal{D}_{m}\zeta_{\alpha,T}$. Define
\begin{gather*}
\widehat{\alpha}= ( \alpha_{m}-1,\alpha_{1},\alpha_{2},\ldots,\alpha_{m-1},0,\ldots,0 ) .
\end{gather*}
The use of $\widehat{\alpha}$ appeared in Knop \cite{Knop1997} in a creation formula for nonsymmetric Macdonald polynomials. We prove the following in several steps:

\begin{Theorem}\label{Dzetam}Suppose $\alpha\in\mathbb{N}_{0}^{N}$, $T \in\mathcal{Y} (\tau ) $ and $\ell(\alpha) =m <N$ then
\begin{gather*}
\mathcal{D}_{m} \zeta_{\alpha,T} =\frac{ \Vert \zeta_{\alpha,T} \Vert ^{2}}{\Vert \zeta_{\widehat{\alpha} ,T}\Vert ^{2}} \theta_{m}^{ -1}
\zeta_{\widehat{\alpha} ,T}.
\end{gather*}
\end{Theorem}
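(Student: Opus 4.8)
The statement is an identity between two elements of $\mathcal{P}_{\tau}$ homogeneous of degree $\vert\alpha\vert-1$, so I would split it into (a)~$\mathcal{D}_{m}\zeta_{\alpha,T}$ is a scalar multiple of $\theta_{m}^{-1}\zeta_{\widehat\alpha,T}$, and (b)~the scalar equals $\Vert\zeta_{\alpha,T}\Vert^{2}/\Vert\zeta_{\widehat\alpha,T}\Vert^{2}$. The structural input for (a) is Corollary~\ref{dz=zero}: because $\ell(\alpha)=m<N$ we have $\mathcal{D}_{i}\zeta_{\alpha,T}=0$ for $i>m$, and since the $\mathcal{D}_{i}$ commute, $\mathcal{D}_{m}\zeta_{\alpha,T}$ is also killed by $\mathcal{D}_{m+1},\dots,\mathcal{D}_{N}$. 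Computing the commutators $[\mathcal{U}_{i},\mathcal{D}_{m}]$ from \eqref{DxiDi}, \eqref{DixjD} and $w\mathcal{D}_{j}=\mathcal{D}_{w(j)}w$, one finds that for $m<i\le N$ the correction terms all contain $\mathcal{D}_{i}\zeta_{\alpha,T}$ with $i>m$ and hence vanish, giving $\mathcal{U}_{i}\mathcal{D}_{m}\zeta_{\alpha,T}=(1+\kappa c(i,T))\mathcal{D}_{m}\zeta_{\alpha,T}$; the same eigenvalues are attached to $\theta_{m}^{-1}\zeta_{\widehat\alpha,T}$, since $\theta_{m}^{-1}$ is a word in $s_{1},\dots,s_{m-1}$, which commute with $\mathcal{U}_{i}$ for $i>m$, and $\widehat\alpha_{i}=0$, $r_{\widehat\alpha}(i)=i$ there. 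This makes (a) plausible but does not yet force it.

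To finish (a) I would induct along the Yang--Baxter graph. The edges out of a node are the $s_{i}$-steps of Case~\ref{ai=ai1} and the analogous rule for $\alpha_{i+1}>\alpha_{i}$ (sending $\alpha\mapsto s_{i}\alpha$ or $T\mapsto T^{(j)}$) together with the jump $\Phi$; on each edge one transports the claimed identity using $s_{i}\mathcal{D}_{m}=\mathcal{D}_{s_{i}(m)}s_{i}$ with the explicit step formulas, and for the jump the relation $\zeta_{\Phi\gamma,T}=x_{N}\theta_{N}^{-1}\zeta_{\gamma,T}$ together with \eqref{DxiDi}, \eqref{DixjD} applied to $\mathcal{D}_{N}(x_{N}\,\cdot\,)$ and the inductive hypothesis in lower degree. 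Taking $m=\ell(\alpha)$ is exactly what makes this go through: $m$ is preserved by all these moves, except the sorting step at $i=m-1$ with $\alpha_{m-1}=0<\alpha_{m}$, which lowers $\ell$ and is handled directly from Corollary~\ref{dz=zero}. Iterating brings $\alpha$ to a partition, which serves as the base case; there $\widehat\alpha$ and $\theta_{m}^{-1}\widehat\alpha$ (which is $\alpha$ with its $m$-th entry lowered by one) are transparent, and the identity follows from the explicit action of $\Phi$ and the norm formula~\eqref{lbnorm}.

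For (b), granting (a), pair both sides against $\theta_{m}^{-1}\zeta_{\widehat\alpha,T}$: using $\langle\mathcal{D}_{m}f,g\rangle_{\kappa}=\langle f,x_{m}g\rangle_{\kappa}$, $\mathcal{S}_{N}$-invariance of the form, and $x_{m}\theta_{m}^{-1}=\theta_{m}^{-1}x_{1}$ (because $\theta_{m}^{-1}(1)=m$), the unknown scalar becomes $\langle\theta_{m}\zeta_{\alpha,T},x_{1}\zeta_{\widehat\alpha,T}\rangle_{\kappa}/\Vert\zeta_{\widehat\alpha,T}\Vert^{2}$. Since $\theta_{m}\alpha$ is $\widehat\alpha$ with its first entry raised by one, $x_{1}\zeta_{\widehat\alpha,T}$ and $\theta_{m}\zeta_{\alpha,T}$ share a leading term, so expanding in the NSJP basis and using orthogonality collapses the pairing to $\Vert\zeta_{\alpha,T}\Vert^{2}$ up to a product of factors of the kind appearing in \eqref{lbnorm}, \eqref{JPnorm}, which I would reconcile with those formulas. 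I expect the real obstacle to be precisely this bookkeeping: showing that at every node of the graph the scalars generated by the step rules and by the jump combine into the norm ratio -- equivalently, that the constant in (a) transforms under the graph moves in the same way $\Vert\zeta_{\alpha,T}\Vert^{2}/\Vert\zeta_{\widehat\alpha,T}\Vert^{2}$ does by \eqref{JPnorm} -- together with nailing the partition base case and the $\ell$-dropping boundary case. A variant avoiding the graph is to evaluate $\langle\mathcal{D}_{m}\zeta_{\alpha,T},\zeta_{\beta,T'}\rangle_{\kappa}=\langle\zeta_{\alpha,T},x_{m}\zeta_{\beta,T'}\rangle_{\kappa}$ for every $(\beta,T')$ from the multiplication rules for $x_{m}$ and match it term by term with $\langle\zeta_{\widehat\alpha,T},\theta_{m}\zeta_{\beta,T'}\rangle_{\kappa}$, trading graph bookkeeping for control of the $x_{m}$-Pieri expansion.
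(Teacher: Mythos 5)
Your plan has the right skeleton, but both halves stop exactly where the actual work lies, so as written there is a genuine gap. For (a): you correctly get $\mathcal{U}_{i}\mathcal{D}_{m}\zeta_{\alpha,T}=\xi_{\alpha,T}(i)\mathcal{D}_{m}\zeta_{\alpha,T}$ for $i>m$ from the commutators and Corollary~\ref{dz=zero}, and you correctly observe this alone does not force proportionality; but your proposed completion -- an induction along the Yang--Baxter graph transporting the identity across steps and jumps -- is not carried out, and to make it close you would have to verify at every edge that the unknown scalar transforms exactly as $\Vert\zeta_{\alpha,T}\Vert^{2}/\Vert\zeta_{\widehat\alpha,T}\Vert^{2}$ does under~(\ref{JPnorm}), i.e.\ you would be proving (b) at every node anyway. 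The paper avoids all of this with one further observation you are missing: conjugate by $\theta_{m}$. Using $\theta_{m}\mathcal{D}_{m}x_{m}\theta_{m}^{-1}=\mathcal{U}_{1}$ together with (\ref{DxiDi}), the commutator of Proposition~\ref{UDjDU}, and $\theta_{m}^{-1}\mathcal{U}_{i}\theta_{m}=\mathcal{U}_{i-1}-\kappa(i-1,m)$ for $1<i\leq m$ (and $=\mathcal{U}_{i}$ for $i>m$), one computes the \emph{entire} spectral vector of $\theta_{m}\mathcal{D}_{m}\zeta_{\alpha,T}$, not just its tail, and finds it equals $\xi_{\widehat\alpha,T}$; uniqueness of spectral vectors for generic $\kappa$ then gives $\mathcal{D}_{m}\zeta_{\alpha,T}=b\,\theta_{m}^{-1}\zeta_{\widehat\alpha,T}$ immediately, with no graph induction and no base case.

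For (b): your pairing set-up is the same as the paper's, and your reduction to $\langle\theta_{m}\zeta_{\alpha,T},x_{1}\zeta_{\widehat\alpha,T}\rangle_{\kappa}$ is fine, but the decisive point -- that the relevant coefficient is exactly $1$, so the pairing is $\Vert\zeta_{\alpha,T}\Vert^{2}$ with no extra rational factors -- is precisely what you defer as the "real obstacle", to be settled by an unspecified Pieri expansion of $x_{m}$ or reconciliation with (\ref{lbnorm}). The paper closes this in two lines, and the needed tool is one you already listed: the affine jump. Since $x_{1}\zeta_{\widehat\alpha,T}=\theta_{N}x_{N}\theta_{N}^{-1}\zeta_{\widehat\alpha,T}=\theta_{N}\zeta_{\beta,T}$ with $\beta=\Phi\widehat\alpha=(\alpha_{1},\ldots,\alpha_{m-1},0,\ldots,0,\alpha_{m})$ (the jump carries coefficient exactly $1$), the pairing becomes $\langle\zeta_{\alpha,T},\theta_{m}^{-1}\theta_{N}\zeta_{\beta,T}\rangle_{\kappa}$, and $\theta_{m}^{-1}\theta_{N}$ is a word in the $s_{i}$'s which, by the step rules (Case~\ref{ai=ai1} and its $\alpha_{i+1}>\alpha_{i}$ analogue), sends $\zeta_{\beta,T}$ to $\zeta_{\alpha,T}$ plus a combination of $\zeta_{\gamma,T'}$ with $\gamma_{m}=0$; orthogonality of NSJPs with distinct labels then yields $b=\Vert\zeta_{\alpha,T}\Vert^{2}/\Vert\zeta_{\widehat\alpha,T}\Vert^{2}$. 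Without supplying these two missing devices, your proposal remains a plausible outline rather than a proof.
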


\begin{Proposition}\label{UDjDU}Suppose $i \neq j$ then
\begin{gather*}
\mathcal{U}_{i} \mathcal{D}_{j} -\mathcal{D}_{j} \mathcal{U}_{i} =\kappa\mathcal{D}_{\min(i,j) } (i,j) .
\end{gather*}
\end{Proposition}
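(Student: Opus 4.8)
The plan is to verify the commutation relation $\mathcal{U}_{i}\mathcal{D}_{j}-\mathcal{D}_{j}\mathcal{U}_{i}=\kappa\mathcal{D}_{\min(i,j)}(i,j)$ directly from the definitions, by expanding $\mathcal{U}_{i}$ in terms of $\mathcal{D}_{i}$, multiplication by $x_{i}$, and the transpositions, and then pushing $\mathcal{D}_{j}$ through each piece using the already-established relations \eqref{DxiDi}, \eqref{DixjD}, the commutativity $\mathcal{D}_{i}\mathcal{D}_{j}=\mathcal{D}_{j}\mathcal{D}_{i}$, and the equivariance $w\mathcal{D}_{k}=\mathcal{D}_{w(k)}w$. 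I would split into the two cases $i<j$ and $i>j$ (so that $\min(i,j)$ is $i$ or $j$ respectively), since the sum $\sum_{k<i}(i,k)$ in the definition of $\mathcal{U}_{i}$ treats indices below $i$ asymmetrically.

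\textbf{Step 1.} Write $\mathcal{U}_{i}=\mathcal{D}_{i}x_{i}-\kappa\sum_{k=1}^{i-1}(i,k)$ and compute
\begin{gather*}
\mathcal{U}_{i}\mathcal{D}_{j}-\mathcal{D}_{j}\mathcal{U}_{i}
=\big(\mathcal{D}_{i}x_{i}\mathcal{D}_{j}-\mathcal{D}_{j}\mathcal{D}_{i}x_{i}\big)
-\kappa\sum_{k=1}^{i-1}\big((i,k)\mathcal{D}_{j}-\mathcal{D}_{j}(i,k)\big).
\end{gather*}
For the first bracket, since $i\neq j$, relation \eqref{DixjD} gives $\mathcal{D}_{j}(x_{i}p)=x_{i}\mathcal{D}_{j}p-\kappa(i,j)p$, i.e.\ $\mathcal{D}_{j}x_{i}=x_{i}\mathcal{D}_{j}-\kappa(i,j)$ as operators; using also $\mathcal{D}_{j}\mathcal{D}_{i}=\mathcal{D}_{i}\mathcal{D}_{j}$ we get
\begin{gather*}
\mathcal{D}_{i}x_{i}\mathcal{D}_{j}-\mathcal{D}_{j}\mathcal{D}_{i}x_{i}
=\mathcal{D}_{i}x_{i}\mathcal{D}_{j}-\mathcal{D}_{i}\mathcal{D}_{j}x_{i}
=\mathcal{D}_{i}\big(x_{i}\mathcal{D}_{j}-\mathcal{D}_{j}x_{i}\big)
=\kappa\mathcal{D}_{i}(i,j).
\end{gather*}

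\textbf{Step 2.} For the sum over $k<i$, use $w\mathcal{D}_{j}=\mathcal{D}_{w(j)}w$ with $w=(i,k)$. If $j\notin\{i,k\}$ then $(i,k)(j)=j$, so $(i,k)\mathcal{D}_{j}=\mathcal{D}_{j}(i,k)$ and the term drops out. The only surviving term in $\sum_{k=1}^{i-1}$ is $k=j$, which occurs precisely when $j<i$, i.e.\ $j=\min(i,j)$; in that case $(i,j)\mathcal{D}_{j}-\mathcal{D}_{j}(i,j)=\mathcal{D}_{i}(i,j)-\mathcal{D}_{j}(i,j)=(\mathcal{D}_{i}-\mathcal{D}_{j})(i,j)$. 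Assembling: when $i<j$ the sum contributes nothing and we are left with $\kappa\mathcal{D}_{i}(i,j)=\kappa\mathcal{D}_{\min(i,j)}(i,j)$; when $j<i$ we get $\kappa\mathcal{D}_{i}(i,j)-\kappa\big(\mathcal{D}_{i}-\mathcal{D}_{j}\big)(i,j)=\kappa\mathcal{D}_{j}(i,j)=\kappa\mathcal{D}_{\min(i,j)}(i,j)$. This covers both cases and completes the proof.

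\textbf{Main obstacle.} The computation is entirely formal, so the only real care needed is bookkeeping: getting the sign and the argument of $\mathcal{D}$ right in \eqref{DixjD} when it is read as an operator identity, and correctly identifying which single term of $\sum_{k<i}(i,k)$ fails to commute with $\mathcal{D}_{j}$ (and only in the regime $j<i$). The interplay between the two contributions — the $\mathcal{D}_{i}x_{i}$ part always producing $\kappa\mathcal{D}_{i}(i,j)$, and the correction $-\kappa(\mathcal{D}_{i}-\mathcal{D}_{j})(i,j)$ appearing exactly when $j<i$ to shift the index from $i$ down to $j$ — is the one place where a careless treatment would give the wrong answer, so I would present those two cases explicitly rather than trying to unify them prematurely.
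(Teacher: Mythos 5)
Your proposal is correct and follows essentially the same route as the paper: expand $\mathcal{U}_{i}=\mathcal{D}_{i}x_{i}-\kappa\sum_{k<i}(i,k)$, use the relation $\mathcal{D}_{j}x_{i}=x_{i}\mathcal{D}_{j}-\kappa(i,j)$ together with $\mathcal{D}_{i}\mathcal{D}_{j}=\mathcal{D}_{j}\mathcal{D}_{i}$ and $w\mathcal{D}_{j}=\mathcal{D}_{w(j)}w$, and treat the cases $i<j$ and $j<i$ separately. The only difference is cosmetic: you compute the commutator directly and isolate the single non-commuting term $k=j$ in the sum, whereas the paper computes $\mathcal{U}_{i}\mathcal{D}_{j}$ in each case and rearranges, but the underlying computation is identical.
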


\begin{proof}Suppose $i <j$ then (by use of (\ref{DixjD}))
\begin{gather*}
\mathcal{U}_{i} \mathcal{D}_{j} =\mathcal{D}_{i} x_{i} \mathcal{D}_{j} -\kappa\sum_{s <i}( i ,s) \mathcal{D}_{j}
 =\mathcal{D}_{i} ( \mathcal{D}_{j} x_{i} +\kappa(i,j)) -\kappa\mathcal{D}_{j} \sum_{s <i}(i,s) \\
\hphantom{\mathcal{U}_{i} \mathcal{D}_{j}}{} =\mathcal{D}_{j} \mathcal{U}_{i} +\kappa\mathcal{D}_{i}( i,j) ,
\end{gather*}
because $\mathcal{D}_{i} \mathcal{D}_{j} =\mathcal{D}_{j} \mathcal{D}_{i}$. Suppose $i >j$ then
\begin{gather*}
\mathcal{U}_{i} \mathcal{D}_{j} =\mathcal{D}_{i} x_{i} \mathcal{D}_{j} -\kappa\sum_{s <i ,s \neq j}(i,s) \mathcal{D}_{j}
-\kappa(i,j) \mathcal{D}_{j}\\
\hphantom{\mathcal{U}_{i} \mathcal{D}_{j}}{} =\mathcal{D}_{i} ( \mathcal{D}_{j} x_{i} +\kappa(i,j)) -\kappa\mathcal{D}_{j} \sum_{s <i ,\,s \neq j}(i,s)-\kappa\mathcal{D}_{i} (i,j) =\mathcal{D}_{j} \mathcal{U}_{i} +\kappa\mathcal{D}_{j} ( i,j).\tag*{\qed}
\end{gather*}\renewcommand{\qed}{}
\end{proof}

\begin{Proposition}The spectral vector of $\theta_{m} \mathcal{D}_{m} \zeta_{\alpha,T}$ equals $\xi_{\widehat{\alpha} ,T}$.
\end{Proposition}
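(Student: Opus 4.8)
The plan is to show that $g:=\theta_{m}\mathcal{D}_{m}\zeta_{\alpha,T}$ is a simultaneous eigenvector of $\mathcal{U}_{1},\dots,\mathcal{U}_{N}$ with eigenvalue list $\xi_{\widehat{\alpha},T}$ (it is nonzero for generic $\kappa$, so the spectral vector is well defined). First comes a purely combinatorial step. Since $\ell(\alpha)=m$ one has $\widehat{\alpha}_{i}=\alpha_{\theta_{m}^{-1}(i)}-\delta_{i,1}$, where $\theta_{m}^{-1}$ sends $1\mapsto m$, $k\mapsto k-1$ for $2\le k\le m$, and fixes $k>m$; and a direct check from the definition of the rank function gives $r_{\widehat{\alpha}}=r_{\alpha}\theta_{m}^{-1}$. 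The only subtle point is that the entry $\alpha_{m}-1$ now in slot $1$ of $\widehat{\alpha}$ receives the rank that $\alpha_{m}$ held in slot $m$ of $\alpha$, because lowering the value by one is exactly compensated by deleting the original copy. Hence $\xi_{\widehat{\alpha},T}(i)=\xi_{\alpha,T}\big(\theta_{m}^{-1}(i)\big)-\delta_{i,1}$ for all $i$; in particular $\xi_{\widehat{\alpha},T}(1)=\xi_{\alpha,T}(m)-1$, $\xi_{\widehat{\alpha},T}(i)=\xi_{\alpha,T}(i-1)$ for $2\le i\le m$, and $\xi_{\widehat{\alpha},T}(i)=\xi_{\alpha,T}(i)=1+\kappa c(i,T)$ for $i>m$ (using $r_{\alpha}(i)=i$ once $i>\ell(\alpha)$).

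Next I conjugate the Cherednik--Dunkl operators by the cycle, writing $\mathcal{U}_{i}g=\theta_{m}\big((\theta_{m}^{-1}\mathcal{U}_{i}\theta_{m})\mathcal{D}_{m}\zeta_{\alpha,T}\big)$. From $s_{j}\mathcal{U}_{i}=\mathcal{U}_{i}s_{j}$ ($j\ne i-1,i$) and $s_{i}\mathcal{U}_{i}s_{i}=\mathcal{U}_{i+1}+\kappa s_{i}$ (whence $s_{i-1}\mathcal{U}_{i}s_{i-1}=\mathcal{U}_{i-1}-\kappa s_{i-1}$), together with the braid relations, an induction along the word $\theta_{m}=s_{1}\cdots s_{m-1}$ yields
\begin{gather*}
\theta_{m}^{-1}\mathcal{U}_{i}\theta_{m}=\mathcal{U}_{i}\quad(i>m),\qquad
\theta_{m}^{-1}\mathcal{U}_{i}\theta_{m}=\mathcal{U}_{i-1}-\kappa(i-1,m)\quad(2\le i\le m),\\
\theta_{m}^{-1}\mathcal{U}_{1}\theta_{m}=\mathcal{U}_{m}+\kappa\sum_{s=1}^{m-1}(s,m)=\mathcal{D}_{m}x_{m},
\end{gather*}
the last equality from $\mathcal{U}_{m}=\mathcal{D}_{m}x_{m}-\kappa\sum_{s<m}(m,s)$.

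Finally I evaluate, using Proposition~\ref{UDjDU}, the relation $w\mathcal{D}_{i}=\mathcal{D}_{w(i)}w$, and Corollary~\ref{dz=zero} (so $\mathcal{D}_{s}\zeta_{\alpha,T}=0$ for $s>m$). For $i>m$, Proposition~\ref{UDjDU} gives $\mathcal{U}_{i}\mathcal{D}_{m}\zeta_{\alpha,T}=\mathcal{D}_{m}\mathcal{U}_{i}\zeta_{\alpha,T}+\kappa\mathcal{D}_{m}(i,m)\zeta_{\alpha,T}$, and $\mathcal{D}_{m}(i,m)\zeta_{\alpha,T}=(i,m)\mathcal{D}_{i}\zeta_{\alpha,T}=0$, so $\mathcal{U}_{i}g=\xi_{\alpha,T}(i)g$. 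For $2\le i\le m$, put $k=i-1<m$; since $\mathcal{D}_{k}(k,m)=(k,m)\mathcal{D}_{m}$, Proposition~\ref{UDjDU} gives $\big(\mathcal{U}_{k}-\kappa(k,m)\big)\mathcal{D}_{m}\zeta_{\alpha,T}=\mathcal{D}_{m}\mathcal{U}_{k}\zeta_{\alpha,T}=\xi_{\alpha,T}(k)\mathcal{D}_{m}\zeta_{\alpha,T}$, hence $\mathcal{U}_{i}g=\xi_{\alpha,T}(i-1)g$. For $i=1$, the commutator identity
\begin{gather*}
\mathcal{D}_{m}\mathcal{U}_{m}-\mathcal{U}_{m}\mathcal{D}_{m}=\mathcal{D}_{m}+\kappa\sum_{s>m}(m,s)\mathcal{D}_{s}+\kappa\sum_{s<m}(m,s)\mathcal{D}_{m},
\end{gather*}
derived from (\ref{DxiDi})--(\ref{DixjD}), applied to $\zeta_{\alpha,T}$ gives $\mathcal{U}_{m}\mathcal{D}_{m}\zeta_{\alpha,T}=(\xi_{\alpha,T}(m)-1)\mathcal{D}_{m}\zeta_{\alpha,T}-\kappa\sum_{s<m}(m,s)\mathcal{D}_{m}\zeta_{\alpha,T}$, so $\mathcal{D}_{m}x_{m}\mathcal{D}_{m}\zeta_{\alpha,T}=(\xi_{\alpha,T}(m)-1)\mathcal{D}_{m}\zeta_{\alpha,T}$ and $\mathcal{U}_{1}g=(\xi_{\alpha,T}(m)-1)g$. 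In every case the eigenvalue is the corresponding component of $\xi_{\widehat{\alpha},T}$ computed in the first step, which is the assertion.

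The main obstacle is the conjugation step: pinning down the exact correction transpositions and signs in $\theta_{m}^{-1}\mathcal{U}_{i}\theta_{m}$, and in particular identifying $\theta_{m}^{-1}\mathcal{U}_{1}\theta_{m}=\mathcal{D}_{m}x_{m}$; the commutator $[\mathcal{D}_{m},\mathcal{U}_{m}]$ needed in the $i=1$ case is the other place where care is required. The identity $r_{\widehat{\alpha}}=r_{\alpha}\theta_{m}^{-1}$ is routine but must be checked allowing for internal zeros of $\alpha$.
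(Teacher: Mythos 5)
Your proposal is correct and follows essentially the same route as the paper: compute $r_{\widehat{\alpha}}$ (equivalently $\xi_{\widehat{\alpha},T}$) combinatorially, conjugate the $\mathcal{U}_i$ by $\theta_m$ (with $\theta_m^{-1}\mathcal{U}_1\theta_m=\mathcal{D}_m x_m$ and $\theta_m^{-1}\mathcal{U}_i\theta_m=\mathcal{U}_{i-1}-\kappa(i-1,m)$ for $2\le i\le m$), and then use Proposition~\ref{UDjDU}, the relation $w\mathcal{D}_i=\mathcal{D}_{w(i)}w$, and Corollary~\ref{dz=zero} to kill the terms with $\mathcal{D}_s$, $s>m$. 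Your handling of the $i=1$ case via the commutator $[\mathcal{D}_m,\mathcal{U}_m]$ is just a repackaging of the paper's direct application of~(\ref{DxiDi}), so the two arguments coincide in substance.
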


\begin{proof}For the rank $r_{\widehat{\alpha}}$ consider
\begin{gather*}
r_{\widehat{\alpha}} (1) =\# \{ j \colon 1 \leq j <m,\, \alpha_{j} >\alpha_{m} -1 \} +1
 =\# \{ j \colon 1 \leq j <m ,\, \alpha_{j} \geq\alpha_{m} \} +1 =r_{\alpha} (m),
\end{gather*}
and for $1 <i \leq m$
\begin{gather*}
r_{\widehat{\alpha}} (i) =\# \{ j \colon 2 \leq j \leq i,\, \alpha_{j -1} \geq\alpha_{i -1} \} +\# \{ j \colon j >i ,\, \alpha_{j -1}
>\alpha_{i -1} \} +c_{i},
\end{gather*}
where $c_{i} =1$ if $\alpha_{m} -1 \geq\alpha_{i -1}$ equivalently if $\alpha_{m} >\alpha_{i -1}$ and $c_{i} =0$ otherwise, thus $r_{\widehat
{\alpha}} (i) =r_{\alpha} ( i -1) $. Apply~(\ref{DxiDi}) with $i =m$ to $\mathcal{D}_{m} \zeta_{\alpha,T}$ to obtain
\begin{gather*}
\mathcal{D}_{m} x_{m} \mathcal{D}_{m} \zeta_{\alpha,T} =\mathcal{D}_{m} \left( \mathcal{D}_{m} x_{m} -1 -\kappa\sum_{i <m}(i,m)
-\kappa\sum_{i >m}(i,m) \right) \zeta_{\alpha,T}\\
\hphantom{\mathcal{D}_{m} x_{m} \mathcal{D}_{m} \zeta_{\alpha,T}}{} =\mathcal{D}_{m} ( \mathcal{U}_{m} -1) \zeta_{\alpha,T}
-\kappa\sum_{i >m}(i,m) \mathcal{D}_{i} \zeta_{\alpha,T} =( \xi_{\alpha,T} (m) -1) \mathcal{D}_{m}\zeta_{\alpha,T},
\end{gather*}
because $\mathcal{D}_{i} \zeta_{\alpha,T} =0$ for $i >m$, and $\theta_{m} \mathcal{D}_{m} x_{m} \theta_{m}^{ -1} =\mathcal{D}_{1} x_{1} =\mathcal{U}_{1}$. Thus $\mathcal{U}_{1} \theta_{m} \mathcal{D}_{m} \zeta_{\alpha,T} = ( \xi_{\alpha,T} (m) -1 ) \theta_{m} \mathcal{D}_{m} \zeta_{\alpha,T}$. Suppose $1 <i \leq m$ then by the commutation relations for $\mathcal{U}_{i}$ and $s_{j}$ we obtain $\theta_{m}^{ -1}
\mathcal{U}_{i} \theta_{m} =\mathcal{U}_{i -1} -\kappa(i -1 ,m)$. Apply this operator to $\mathcal{D}_{m} \zeta_{\alpha,T}$:
\begin{gather*}
\theta_{m}^{ -1} \mathcal{U}_{i} \theta_{m} \mathcal{D}_{m} \zeta_{\alpha,T} =\mathcal{U}_{i -1} \mathcal{D}_{m} \zeta_{\alpha,T} -\kappa( i -1
,m) \mathcal{D}_{m} \zeta_{\alpha,T}\\
\hphantom{\theta_{m}^{ -1} \mathcal{U}_{i} \theta_{m} \mathcal{D}_{m} \zeta_{\alpha,T}}{} =( \mathcal{D}_{m} \mathcal{U}_{i -1} +\kappa\mathcal{D}_{i -1}(i -1 ,m) ) \zeta_{\alpha,T} -\kappa( i -1,m) \mathcal{D}_{m} \zeta_{\alpha,T}\\
\hphantom{\theta_{m}^{ -1} \mathcal{U}_{i} \theta_{m} \mathcal{D}_{m} \zeta_{\alpha,T}}{} =\xi_{\alpha,T} (i -1) \mathcal{D}_{m} \zeta_{\alpha,T}
\end{gather*}
by use of Proposition~\ref{UDjDU} and $(i -1 ,m) \mathcal{D}_{m} =\mathcal{D}_{i -1} (i -1 ,m) $. Suppose $m <i \leq N$ then $\theta_{m}^{ -1} \mathcal{U}_{i} \theta_{m} =\mathcal{U}_{i}$ and
\begin{gather*}
\theta_{m}^{ -1} \mathcal{U}_{i} \theta_{m} \mathcal{D}_{m} \zeta_{\alpha,T} = ( \mathcal{D}_{m} \mathcal{U}_{i} +\kappa\mathcal{D}_{m} ( i,m)) \zeta_{\alpha,T} =\xi_{\alpha,T} (i) \mathcal{D}_{m} \zeta_{\alpha,T}+\kappa(i,m) \mathcal{D}_{i} \zeta_{\alpha,T}\\
\hphantom{\theta_{m}^{ -1} \mathcal{U}_{i} \theta_{m} \mathcal{D}_{m} \zeta_{\alpha,T}}{} =\xi_{\alpha,T} (i) \mathcal{D}_{m} \zeta_{\alpha,T}.
\end{gather*}
Thus the spectral vector of $\theta_{m} \mathcal{D}_{m} \zeta_{\alpha,T}$ is%
\begin{gather*}
\xi_{\widehat{\alpha} ,T} =\big( \xi_{\alpha,T} (m) -1,\xi_{\alpha,T} (1) ,\ldots,\xi_{\alpha,T} ( m -1 ),\xi_{\alpha,T} ( m +1 ), \ldots\big) .\tag*{\qed}
\end{gather*}
\renewcommand{\qed}{}
\end{proof}

We can now finish the proof of Theorem \ref{Dzetam}. If $m =1$ then $\widehat{\alpha} = ( \alpha_{1} -1 ,0, \ldots ) $ and $\theta_{1}=1$.

\begin{proof}From the proposition and the uniqueness of spectral vectors it follows that $\mathcal{D}_{m}\zeta_{\alpha,T}= b\theta_{m}^{-1}\zeta_{\widehat{\alpha},T}$ for some constant $b$. Let $\beta= ( \alpha_{1},\ldots,\alpha _{m-1},0,\ldots,0,\alpha_{m} ) $, thus $\Phi\widehat{\alpha}=\beta$ and $x_{N}\theta_{N}^{-1}\zeta_{\widehat{\alpha},T}$ $=\zeta_{\beta,T}$. Using the properties of the bilinear form we find
\begin{gather*}
\big\langle\mathcal{D}_{m}\zeta_{\alpha,T},\theta_{m}^{-1}\zeta_{\widehat{\alpha },T}\big\rangle =b\big\langle\theta_{m}^{-1}\zeta_{\widehat{\alpha},T},\theta_{m}^{-1}\zeta_{\widehat{\alpha},T}\big\rangle=b \Vert \zeta_{\widehat{\alpha },T}\Vert ^{2}\\
\hphantom{\big\langle\mathcal{D}_{m}\zeta_{\alpha,T},\theta_{m}^{-1}\zeta_{\widehat{\alpha },T}\big\rangle }{} =\big\langle\zeta_{\alpha,T},x_{m}\theta_{m}^{-1}\zeta_{\widehat{\alpha},T}\big\rangle=\big\langle\zeta_{\alpha,T},\big( x_{m}\theta_{m}^{-1}\theta _{N}\big) \theta_{N}^{-1}\zeta_{\widehat{\alpha},T}\big\rangle.
\end{gather*}
Then $x_{m}\theta_{m}^{-1}\theta_{N}=x_{m} ( s_{m+1}s_{m+2}\cdots s_{N-1} ) = ( s_{m+1}s_{m+2}\cdots s_{N-1}\ ) x_{N}$ so that $\big( x_{m}\theta_{m}^{-1}\theta_{N}\big) \theta _{N}^{-1}\zeta_{\widehat{\alpha},T}$ $= ( s_{m+1}s_{m+2}\cdots s_{N-1} ) \zeta_{\beta,T}$. By the transformation rules $ ( s_{m+1}s_{m+2}\cdots s_{N-1} ) \zeta_{\beta,T}=\zeta _{\alpha,T}+\sum_{\gamma,T^{\prime}}b_{\gamma,T^{\prime}}\zeta_{\gamma ,T^{\prime}}$ where $b_{\gamma,T^{\prime}}\in\mathbb{Q}(\kappa)$ and each $\gamma$ satisfies $\gamma_{m}=0$ (and $\gamma_{i}=\alpha_{i}$ for $1\leq i<m$). By the orthogonality of the NSJP it follows that $\big\langle \mathcal{D}_{m}\zeta_{\alpha,T},\theta_{m}^{-1}\zeta_{\widehat{\alpha} ,T}\big\rangle=\Vert \zeta_{\alpha,T}\Vert ^{2}$. This completes the proof.
\end{proof}

We specialize the formula to partition labels.

\begin{Theorem}Suppose $\alpha\in\mathbb{N}_{0}^{N , +}$, $T \in\mathcal{Y} (\tau) $ and $\ell(\alpha) =m <N$ then
\begin{gather*}
\mathcal{D}_{m} \zeta_{\alpha,T} = ( \alpha_{m} +\kappa c ( m,T)) \theta_{m}^{ -1} \zeta_{\widehat{\alpha} ,T}\\
\hphantom{\mathcal{D}_{m} \zeta_{\alpha,T} =}{} \times\prod_{j =m +1}^{N}\frac{ ( \lambda_{m} +\kappa ( c (m ,T ) -c (j,T) -1 ) ) ( \lambda_{m}+\kappa ( c ( m ,T) -c (j,T) +1)) }{( \lambda_{m} +\kappa( c ( m ,T) -c(j,T))) ^{2}}.
\end{gather*}
\end{Theorem}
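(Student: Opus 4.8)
The plan is to feed the partition case into Theorem~\ref{Dzetam}: since that result already gives
\[
\mathcal{D}_m\zeta_{\alpha,T}=\frac{\Vert\zeta_{\alpha,T}\Vert^2}{\Vert\zeta_{\widehat\alpha,T}\Vert^2}\,\theta_m^{-1}\zeta_{\widehat\alpha,T},
\]
it remains only to evaluate the scalar $\Vert\zeta_{\alpha,T}\Vert^2/\Vert\zeta_{\widehat\alpha,T}\Vert^2$ explicitly when $\alpha$ is a partition. Write $\lambda:=\alpha\in\mathbb{N}_0^{N,+}$ with $\ell(\lambda)=m<N$, so that $\widehat\alpha=(\lambda_m-1,\lambda_1,\ldots,\lambda_{m-1},0,\ldots,0)$, and put $\mu:=\widehat\alpha^{+}$. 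Because $\lambda_1\geq\cdots\geq\lambda_{m-1}\geq\lambda_m>\lambda_m-1\geq0$, the nonincreasing rearrangement is $\mu=(\lambda_1,\ldots,\lambda_{m-1},\lambda_m-1,0,\ldots,0)$, again a partition, so $(\ref{lbnorm})$ applies both to $\Vert\zeta_{\lambda,T}\Vert^2$ and to $\Vert\zeta_{\mu,T}\Vert^2$. By $(\ref{JPnorm})$, $\Vert\zeta_{\widehat\alpha,T}\Vert^2=\mathcal{E}(\widehat\alpha,T)^{-1}\Vert\zeta_{\mu,T}\Vert^2$, whence the scalar factor equals $\mathcal{E}(\widehat\alpha,T)\,\Vert\zeta_{\lambda,T}\Vert^2/\Vert\zeta_{\mu,T}\Vert^2$.

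Next I would compute $\Vert\zeta_{\lambda,T}\Vert^2/\Vert\zeta_{\mu,T}\Vert^2$ directly from $(\ref{lbnorm})$ by cancelling common factors. Since $\lambda$ and $\mu$ differ only in coordinate $m$ (where $\mu_m=\lambda_m-1$), the product $\prod_i(1+\kappa c(i,T))_{\lambda_i}$ contributes the single surviving factor $(1+\kappa c(m,T))_{\lambda_m}/(1+\kappa c(m,T))_{\lambda_m-1}=\lambda_m+\kappa c(m,T)$, and among the double products only the pairs $(i,m)$ with $i<m$ and the pairs $(m,j)$ with $m<j\leq N$ change the length of their inner product by one. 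The pair $(m,j)$ leaves an extra factor $1-\bigl(\kappa/(\lambda_m+\kappa(c(m,T)-c(j,T)))\bigr)^2$ in $\Vert\zeta_{\lambda,T}\Vert^2$ (the term $\ell=\lambda_m$, using $\mu_j=\lambda_j=0$), while the pair $(i,m)$ leaves an extra factor $1-\bigl(\kappa/(\lambda_i-\lambda_m+1+\kappa(c(i,T)-c(m,T)))\bigr)^2$ in $\Vert\zeta_{\mu,T}\Vert^2$ (the term $\ell=\lambda_i-\lambda_m+1$). Hence
\[
\frac{\Vert\zeta_{\lambda,T}\Vert^2}{\Vert\zeta_{\mu,T}\Vert^2}=(\lambda_m+\kappa c(m,T))\,\frac{\prod_{j=m+1}^{N}\Bigl(1-\bigl(\frac{\kappa}{\lambda_m+\kappa(c(m,T)-c(j,T))}\bigr)^2\Bigr)}{\prod_{i=1}^{m-1}\Bigl(1-\bigl(\frac{\kappa}{\lambda_i-\lambda_m+1+\kappa(c(i,T)-c(m,T))}\bigr)^2\Bigr)}.
\]

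Then I would identify the denominator with $\mathcal{E}(\widehat\alpha,T)$. In $\widehat\alpha$ the coordinates in positions $2,\ldots,m$ are $\lambda_1,\ldots,\lambda_{m-1}$ (nonincreasing, all $\geq\lambda_m>\lambda_m-1$), those in positions $>m$ are $0\leq\lambda_m-1$, and position $1$ holds $\lambda_m-1$; thus the only strict ascents $\widehat\alpha_i<\widehat\alpha_j$ with $i<j$ are the pairs $(1,j)$, $2\leq j\leq m$. From the rank identities established above, $r_{\widehat\alpha}(1)=r_\alpha(m)$ and $r_{\widehat\alpha}(j)=r_\alpha(j-1)$, and since $\alpha$ is a partition $r_\alpha=I$; so the $(1,j)$ factor of $\mathcal{E}(\widehat\alpha,T)$ is $1-\bigl(\kappa/(\lambda_{j-1}-\lambda_m+1+\kappa(c(j-1,T)-c(m,T)))\bigr)^2$, which after the substitution $i=j-1$ is exactly the denominator above. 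Cancelling it and writing $1-(\kappa/A)^2=(A-\kappa)(A+\kappa)/A^2$ with $A=\lambda_m+\kappa(c(m,T)-c(j,T))$ produces the claimed formula. (For $m=1$ everything degenerates: $\widehat\alpha$ is already a partition, $\theta_1=1$, $\mathcal{E}(\widehat\alpha,T)=1$, and the product over $j>1$ is the whole answer.)

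The computation is largely bookkeeping; the one delicate point is matching, factor by factor, the entries of $\mathcal{E}(\widehat\alpha,T)$ with the non-cancelling factors of $\Vert\zeta_{\lambda,T}\Vert^2/\Vert\zeta_{\mu,T}\Vert^2$ — getting the shift ``$+1$'' in $\lambda_i-\lambda_m+1$ and the content differences to line up exactly. That alignment rests entirely on $r_\alpha=I$, which makes $r_{\widehat\alpha}$ the explicit permutation $(1\mapsto m,\ j\mapsto j-1)$ used above.
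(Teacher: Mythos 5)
Your argument is correct and is essentially the paper's own proof: both reduce to Theorem~\ref{Dzetam} and evaluate the scalar $\Vert\zeta_{\alpha,T}\Vert^{2}/\Vert\zeta_{\widehat{\alpha},T}\Vert^{2}$ as $\mathcal{E}(\widehat{\alpha},T)\,\Vert\zeta_{\alpha,T}\Vert^{2}/\Vert\zeta_{\widehat{\alpha}^{+},T}\Vert^{2}$ via (\ref{JPnorm}) and (\ref{lbnorm}), with the $(1,j)$-factors of $\mathcal{E}(\widehat{\alpha},T)$ (using $r_{\alpha}=I$, $r_{\widehat{\alpha}}(1)=m$, $r_{\widehat{\alpha}}(j)=j-1$) cancelling the $i<m$ factors, leaving exactly the product over $j>m$ as in (\ref{normprod}). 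The factor-by-factor bookkeeping you describe matches the paper's cancellation argument, so no further comment is needed.
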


\begin{proof}The multiplicative constant is
\begin{gather*}
\frac{\Vert \zeta_{\alpha,T}\Vert ^{2}}{\Vert \zeta_{\widehat{\alpha},T}\Vert ^{2}}=\mathcal{E}( \widehat{\alpha},T) \frac{\Vert \zeta_{\alpha,T}\Vert ^{2}}{\Vert\zeta_{\widehat{\alpha}^{+},T}\Vert ^{2}}.
\end{gather*}
Note $\widehat{\alpha}^{+}=( \alpha_{1},\ldots,\alpha_{m-1},\alpha_{m}-1,0,\ldots) $ and
\begin{gather*}
\left\{ \prod_{i=1}^{N} ( 1+\kappa c(i,T) ) _{\alpha_{i}}\right\} \times\left\{ \prod_{i=1}^{N}( 1+\kappa c(i,T)) _{\widehat{\alpha}_{i}^{+}}\right\} ^{-1}=\alpha_{m}+\kappa c(m,T) .
\end{gather*}
After some cancellations in formula (\ref{lbnorm}) we find that
\begin{gather*}
\frac{\Vert \zeta_{\alpha,T}\Vert ^{2}}{\Vert \zeta_{\widehat{\alpha}^{+},T}\Vert ^{2}} =( \alpha_{m}+\kappa c(m,T)) \prod_{i=1}^{m-1}\left( 1-\left( \frac{\kappa}{\alpha_{i}-\alpha_{m}+1+\kappa( c(i,T) -c(m,T)) }\right) ^{2}\right) ^{-1}\\
\hphantom{\frac{\Vert \zeta_{\alpha,T}\Vert ^{2}}{\Vert \zeta_{\widehat{\alpha}^{+},T}\Vert ^{2}} =}{} \times\prod_{j=m+1}^{N}\left( 1-\left( \frac{\kappa}{\alpha_{m}+\kappa ( c(m,T) -c(j,T) ) }\right)^{2}\right) .
\end{gather*}
Also
\begin{gather*}
\mathcal{E} ( \widehat{\alpha},T ) =\prod_{j=2}^{m}\left(1-\left( \frac{\kappa}{\widehat{\alpha}_{j}-\widehat{\alpha}_{1}+\kappa( c( r_{\widehat{\alpha}}(j) ,T)-c( r_{\widehat{\alpha}}(1) ,T)) }\right)^{2}\right) \\
\hphantom{\mathcal{E} ( \widehat{\alpha},T )}{} =\prod_{j=2}^{m}\left( 1-\left( \frac{\kappa}{\alpha_{j-1}-(\alpha_{m}-1) +\kappa( c( j-1,T) -c(
m,T)) }\right) ^{2}\right) .
\end{gather*}
Change the index of multiplication $i=j-1$ and this shows
\begin{gather}
\mathcal{E}( \widehat{\alpha},T) \frac{\Vert \zeta_{\alpha,T}\Vert ^{2}}{\Vert \zeta_{\widehat{\alpha}^{+},T}\Vert ^{2}}=( \alpha_{m}+\kappa c(m,T))\prod_{j=m+1}^{N}\left( 1-\left( \frac{\kappa}{\alpha_{m}+\kappa(c(m,T) -c(j,T)) }\right) ^{2}\right) .\label{normprod}
\end{gather}
This completes the proof.
\end{proof}

\section{Singular polynomials}

For special rational values of $\kappa$ there exist nonconstant polynomials $p$ in $\mathcal{P}_{\tau}$ which satisfy $\mathcal{D}_{i} p =0$ for $1 \leq i \leq N$. These are called singular polynomials and the corresponding value of~$\kappa$ is a~singular value. Suppose $\kappa$ is a~specific rational number for which the form $\langle\cdot, \cdot\rangle$ is positive-definite then $\kappa$ can not be a singular value, for suppose $p$ is singular then $\langle x^{\alpha} \otimes T ,p (x) \rangle_{\kappa} = \langle1 \otimes T$, $\mathcal{D}_{1}^{\alpha_{1}} \cdots\mathcal{D}_{N}^{\alpha_{N}} p (x) \rangle_{\kappa} =0$ for any $T \in\mathcal{Y} (\tau) $ and $\alpha\in\mathbb{N}_{0}^{N}$ with $\vert \alpha\vert >0$, so that $\langle p^{ \prime} (x) ,p (x) \rangle_{\kappa} =0$ for all $p^{ \prime} \in\mathcal{P} _{\tau}$ and $\Vert p\Vert ^{2} =0$. It is known that $\langle \cdot, \cdot\rangle_{\kappa}$ is positive-definite for $-\frac{1}{h_{\tau}}<\kappa<\frac{1}{h_{\tau}}$ (see formula~(\ref{JPnorm}) and~\cite{Dunkl2016}). We will construct singular polynomials for $\kappa= \pm\frac{1}{h_{\tau}}$ provided $\dim V_{\tau} \geq2$. Etingof and Stoica \cite[Section~5]{Etingof/Stoica2009} constructed singular polynomials for these parameter values without using Jack polynomials. In the one-dimensional cases $\tau=(N) $ the bound is $\kappa> -\frac{1}{h_{\tau}} =
-\frac{1}{N}$ and for $\tau=\big( 1^{N}\big) $ the bound is $\kappa <\frac{1}{h_{\tau}} =\frac{1}{N}$.

First suppose $\ell(\tau) \geq2$ then set $l =\ell ( \tau ) $, $\alpha=\big( 1^{\tau_{l}} ,0^{N -\tau_{l}}\big) $ and $T=T_{1}$, the inv-minimal RSYT which has $N ,N -1 ,\ldots,1$ entered row-by-row.

\begin{Theorem}\label{rowsthm}$\mathcal{D}_{\tau_{l}}\zeta_{\alpha,T_{1}}=\prod\limits_{i=1}^{l-1}\frac{1-\kappa h(i,1) }{1-\kappa(h(i,1) -1) }\theta_{\tau_{l}}^{-1}\zeta_{\widehat{\alpha},T_{1}}$.
\end{Theorem}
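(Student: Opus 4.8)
The plan is to apply the general formula for $\mathcal D_m\zeta_{\alpha,T}$ proved just above, namely $\mathcal D_m\zeta_{\alpha,T}=\frac{\|\zeta_{\alpha,T}\|^2}{\|\zeta_{\widehat\alpha,T}\|^2}\theta_m^{-1}\zeta_{\widehat\alpha,T}$, to the specific data $m=\tau_l$, $\alpha=(1^{\tau_l},0^{N-\tau_l})$ and $T=T_1$, the inv-minimal RSYT. Since $\alpha$ is already a partition, one is in the situation of the preceding theorem on partition labels, so $\mathcal D_{\tau_l}\zeta_{\alpha,T_1}$ equals $\theta_{\tau_l}^{-1}\zeta_{\widehat\alpha,T_1}$ times the constant given in formula~(\ref{normprod}), i.e.\ $(\alpha_m+\kappa c(m,T_1))\prod_{j=m+1}^N\bigl(1-(\kappa/(\alpha_m+\kappa(c(m,T_1)-c(j,T_1))))^2\bigr)$ with $m=\tau_l$ and $\alpha_m=1$. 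The whole task is therefore to evaluate this product of contents explicitly for the row-by-row tableau $T_1$ and recognize it as $\prod_{i=1}^{l-1}\frac{1-\kappa h(i,1)}{1-\kappa(h(i,1)-1)}$.

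First I would record the content data for $T_1$. Entering $N,N-1,\dots,1$ row by row means that the entries $\{1,2,\dots,\tau_l\}$ occupy exactly the last row, row $l$, in columns $1,\dots,\tau_l$; in particular the entry $m=\tau_l$ sits at $(\mathrm{row},\mathrm{col})=(l,\tau_l)$, so $c(\tau_l,T_1)=\tau_l-l$. Hence $\alpha_m+\kappa c(m,T_1)=1+\kappa(\tau_l-l)=1-\kappa(l-1-\tau_l+?)$—more usefully, $h(l,1)=\tau_l-1+1=\tau_l$ and $h(i,1)$ for $i<l$ will relate to these contents. Next, the entries $j$ with $m<j\le N$ are precisely those lying in rows $1,\dots,l-1$ (all of them, since rows $1,\dots,l-1$ are filled with the larger numbers), together with possibly $j=\tau_l+1,\dots$ but all of those are in rows above $l$. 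So the product over $j=m+1,\dots,N$ is a product over all cells in rows $1,\dots,l-1$. For a cell in row $i$, column $c$, the content is $c-i$, and the factor is $1-\bigl(\kappa/(1+\kappa((\tau_l-l)-(c-i)))\bigr)^2$, which factors as
\[
\frac{\bigl(1+\kappa(\tau_l-l-c+i-1)\bigr)\bigl(1+\kappa(\tau_l-l-c+i+1)\bigr)}{\bigl(1+\kappa(\tau_l-l-c+i)\bigr)^2}.
\]
The plan is then to sum over columns $c=1,\dots,\tau_i$ within a fixed row $i$ and watch the telescoping: consecutive factors share numerator/denominator pieces, so the product over $c$ in row $i$ collapses to $\frac{\bigl(1+\kappa(\tau_l-l-\tau_i+i)\bigr)\bigl(1+\kappa(\tau_l-l+i)\bigr)}{\bigl(1+\kappa(\tau_l-l-\tau_i+i+1)\bigr)\bigl(1+\kappa(\tau_l-l+i+1)\bigr)}$ or a similar two-by-two leftover; I would then multiply these telescoped row contributions over $i=1,\dots,l-1$ and hope for a second telescoping in $i$ (the upper endpoint of row $i$ matching part of row $i+1$), leaving a short closed form that can be rewritten using hook lengths $h(i,1)=\tau_i-i+l$ (valid when $\tau_i\ge 1$ for all $i\le l$, which holds since $i\le l=\ell(\tau)$). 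Reconciling $1-\kappa h(i,1)$ and $1-\kappa(h(i,1)-1)$ with the surviving endpoint factors, and absorbing the prefactor $1+\kappa(\tau_l-l)=1-\kappa(h(l,1)-1)\cdot(-1)$—more precisely noting $h(l,1)=\tau_l$ so the prefactor is $1-\kappa(l-1)+\kappa(\tau_l-1)+\cdots$—is the bookkeeping that finishes the identification.

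The main obstacle is purely combinatorial: correctly tracking which entries $j$ satisfy $m<j\le N$ and where they sit in $T_1$, and then carrying out the double telescoping in $c$ and in $i$ without sign or off-by-one errors, since the content $\tau_l-l$ of the distinguished node and the row/column coordinates all enter linearly in $\kappa$ and it is easy to misplace a $\pm1$. A sensible safeguard is to first verify the claimed formula on a small example (say $\tau=(2,1)$, $N=3$, so $l=2$, $\tau_l=1$, $h(1,1)=2$) to pin down the normalization, and then present the general telescoping; the structure of formula~(\ref{normprod}) makes clear that no contribution from rows below $l$ or from the columns beyond $\tau_l$ in row $l$ can appear, which is what makes the answer depend only on the first-column hook lengths $h(i,1)$.
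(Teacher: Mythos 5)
Your strategy is exactly the paper's: specialize the partition-label differentiation theorem, i.e.\ the constant (\ref{normprod}) with $m=\tau_l$, $\alpha=(1^{\tau_l},0^{N-\tau_l})$, $T=T_1$, observe that the entries $j$ with $\tau_l<j\le N$ fill rows $1,\dots,l-1$ of $T_1$ so their contents form consecutive-integer strings, and collapse the product row by row with the telescoping identity $\prod_{i=a}^{b}g(i-1)g(i+1)/g(i)^2=g(a-1)g(b+1)/(g(a)g(b))$. However, your execution contains a concrete error that derails the computation: in an RSYT the entries \emph{decrease} along each row, and $T_1$ is filled row-by-row with $N,N-1,\dots,1$, so the last row reads $\tau_l,\tau_l-1,\dots,1$ from left to right. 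Hence the distinguished entry $m=\tau_l$ sits at the node $(l,1)$ with $c(\tau_l,T_1)=1-l$, not at $(l,\tau_l)$ with content $\tau_l-l$ as you assert (the node $(l,\tau_l)$ contains the entry $1$; this also matches the paper's spectral vector, whose $\tau_l$-th coordinate is $2+\kappa(1-l)$). With your value, the factor attached to a cell of row $i$ and content $c-i$ is built from $1+\kappa(\tau_l-l-(c-i))$ instead of the correct $1+\kappa(1-l-(c-i))$, and after the in-row telescoping the surviving endpoint factors are $1+\kappa(\tau_l-1-h(i,1))$ and $1+\kappa(\tau_l-h(i,1))$, which do not reduce to $1-\kappa h(i,1)$ and $1-\kappa(h(i,1)-1)$ unless $\tau_l=1$; so the stated hook-length formula is unreachable along your bookkeeping.

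With the correct content $c(\tau_l,T_1)=1-l$, row $\#i$ contributes
\begin{gather*}
\frac{(1+\kappa(i-l+1))(1-\kappa h(i,1))}{(1+\kappa(i-l))(1-\kappa(h(i,1)-1))},\qquad h(i,1)=\tau_i+l-i,
\end{gather*}
and the ``second telescoping'' you only hope for is precisely the product of $\frac{1+\kappa(i-l+1)}{1+\kappa(i-l)}$ over $i=1,\dots,l-1$, which equals $\big(1+\kappa(1-l)\big)^{-1}$ and cancels the prefactor $\alpha_m+\kappa c(\tau_l,T_1)=1+\kappa(1-l)$; this cancellation has to be exhibited, not left as bookkeeping. (Also, in your proposed sanity check $\tau=(2,1)$ one has $h(1,1)=\tau_1+\ell(\tau)-1=3$, not $2$, a further sign that the content/hook accounting needs to be redone before the identification with $\prod_{i=1}^{l-1}\frac{1-\kappa h(i,1)}{1-\kappa(h(i,1)-1)}$ can be claimed.)
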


There are several ingredients to the proof. The first $\tau_{l}$ coordinates of the spectral vector of~$\zeta_{\alpha,T_{1}}$ are
\begin{gather*}
( 2 +\kappa( \tau_{l} -l) ,2 +\kappa( \tau_{l} -1 -l) ,\ldots,2 +\kappa( 1 -l)) .
\end{gather*}
The contents $c( r_{\alpha} (i) ,T_{1}) $ for $\tau_{l} +1 \leq i \leq N$ make up $l -1$ lists of consecutive integers, one for each row, from row $\# 1$ to row $\# (l -1) $. The following is easily proved by induction:

\begin{Lemma}Suppose $g$ is a function on $\mathbb{Z}$ and $a\leq b$ then
\begin{gather*}
\prod_{i=a}^{b}\left( \frac{g(i-1) g(i+1) }{g(i) ^{2}}\right) =\frac{g(a-1) g(b+1) }{g(a) g(b) }.
\end{gather*}
\end{Lemma}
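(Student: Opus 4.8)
The plan is to prove this telescoping identity by induction on $b$, which is the cleanest route for a product of ratios. Implicitly I assume that $g$ does not vanish on the finite set $\{a-1,a,\ldots,b+1\}$ (in the application $g$ is an affine function of its argument evaluated away from its single zero, so this holds); equivalently one may read the statement as an identity of rational functions in the independent symbols $g(a-1),g(a),\ldots,g(b+1)$, in which case the divisions cause no difficulty whatsoever.

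First I would dispose of the base case $b=a$: the left-hand side is the single factor $g(a-1) g(a+1) /g(a) ^{2}$, which is literally $\frac{g(a-1) g(a+1) }{g(a) g(a) }$, the asserted right-hand side. For the inductive step, assume the formula for some $b\geq a$ and split off the last factor:
\begin{gather*}
\prod_{i=a}^{b+1}\frac{g(i-1) g(i+1) }{g(i) ^{2}}
=\left( \prod_{i=a}^{b}\frac{g(i-1) g(i+1) }{g(i) ^{2}}\right) \frac{g(b) g(b+2) }{g(b+1) ^{2}}
=\frac{g(a-1) g(b+1) }{g(a) g(b) }\cdot\frac{g(b) g(b+2) }{g(b+1) ^{2}}.
\end{gather*}
Cancelling $g(b)$ and one factor of $g(b+1)$ leaves $\frac{g(a-1) g(b+2) }{g(a) g(b+1) }$, which is precisely the claim with $b$ replaced by $b+1$; this closes the induction.

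Alternatively, and to exhibit the telescoping directly, I would shift the index in the two numerator products and write the left-hand side as
\begin{gather*}
\frac{\big( \prod_{i=a-1}^{b-1}g(i) \big) \big( \prod_{i=a+1}^{b+1}g(i) \big) }{\big( \prod_{i=a}^{b}g(i) \big) ^{2}};
\end{gather*}
every value $g(a),\ldots,g(b)$ then appears to the same total multiplicity in numerator and denominator and cancels, leaving exactly $g(a-1) g(b+1) $ over $g(a) g(b) $. There is no genuine obstacle in this argument; the one point worth an explicit word is the non-vanishing of $g$ on the relevant finite set, which is why I would record it at the outset.
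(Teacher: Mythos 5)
Your proof is correct, and the induction on $b$ is exactly the route the paper indicates (it states only that the lemma "is easily proved by induction" and omits the details you supply). The added remark on non-vanishing of $g$ and the alternative direct cancellation are both fine but not needed beyond what the paper intends.
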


Consider the part of the product in (\ref{normprod}) corresponding to row $\# i$: the contents are $1 -i ,2 -i ,\ldots,\tau_{i} -i$ and by the lemma this row contributes
\begin{gather*}
\frac{( 1 +\kappa( 1 -l -(1 -i) +1)) ( 1 +\kappa( 1 -l -( \tau_{i} -i) -1)) }{( 1 +\kappa( 1 -l -(1 -i) )) (1 +\kappa( 1 -l -( \tau_{i} -i)))}\\
\qquad {} =\frac{( 1 +\kappa( i -l +1) )( 1 -\kappa h ( i ,1) ) }{( 1 +\kappa( i -l)) ( 1 -\kappa( h ( i ,1) -1) ) }%
\end{gather*}
to the product because the hook-length $h ( i ,1) =\tau_{i} +l-i$. Thus
\begin{gather*}
\frac{\Vert \zeta_{\alpha,T_{1}}\Vert ^{2}}{\Vert \zeta_{\widehat{\alpha} ,T_{1}}\Vert ^{2}} =( 1 +\kappa(1 -l) ) \prod_{i =1}^{l -1}\frac{( 1 +\kappa( i -l+1) ) ( 1 -\kappa h ( i ,1) )}{( 1 +\kappa( i -l) ) ( 1 -\kappa( h( i ,1) -1) ) }\\
\hphantom{\frac{\Vert \zeta_{\alpha,T_{1}}\Vert ^{2}}{\Vert \zeta_{\widehat{\alpha} ,T_{1}}\Vert ^{2}}}{} =\prod_{i =1}^{l -1}\frac{1 -\kappa h ( i ,1) }{1 -\kappa( h ( i ,1) -1) },
\end{gather*}
the other factors telescope.

\begin{Lemma}\label{Dzi<m}Suppose $1\leq i\leq\tau_{l}$ then $\mathcal{D}_{i}\zeta _{\alpha,T_{1}}=\prod\limits_{i=1}^{l-1}\frac{1-\kappa h(i,1)
}{1-\kappa ( h(i,1) -1 ) }\theta_{i}^{-1}\zeta_{\widehat{\alpha},T_{1}}$.
\end{Lemma}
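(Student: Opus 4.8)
The plan is to reduce Lemma~\ref{Dzi<m} to the already-established Theorem~\ref{rowsthm} by a downward induction on $i$, from $i=\tau_l$ down to $i=1$, using the conjugation relation $\mathcal{D}_{i}=s_{i}\mathcal{D}_{i+1}s_{i}$ that holds since $s_i\mathcal{D}_{i+1}=\mathcal{D}_i s_i$. The base case $i=\tau_l$ is exactly Theorem~\ref{rowsthm}.

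For the inductive step, suppose the formula holds for index $i+1$ where $1\le i<\tau_l$, and write $\mathcal{D}_i\zeta_{\alpha,T_1}=s_i\mathcal{D}_{i+1}s_i\zeta_{\alpha,T_1}$. The first thing I would do is analyze $s_i\zeta_{\alpha,T_1}$: since $\alpha=(1^{\tau_l},0^{N-\tau_l})$ and $i<\tau_l$ we have $\alpha_i=\alpha_{i+1}=1$, so we are in Case~\ref{ai=ai1}; here the relevant ranks are $j:=r_\alpha(i)$, $j+1=r_\alpha(i+1)$, and one checks these correspond to a column-descent situation in $T_1$ (the inv-minimal tableau with $N,N-1,\ldots,1$ entered row-by-row has $\operatorname{col}$ nondecreasing along each row read left-to-right, so consecutive content entries among the first $\tau_l$ positions differ by $+1$). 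This means $s_i\zeta_{\alpha,T_1}$ is either $\pm\zeta_{\alpha,T_1}$ or a $\kappa$-independent linear combination involving $\zeta_{\alpha,T_1^{(j)}}$; but since $\ell(\widehat\alpha)=m$ coordinates are permuted only among the first block, and the entries $N,N-1,\dots$ occupy the top row, I expect $s_i$ to act on $\zeta_{\alpha,T_1}$ simply (likely $s_i\zeta_{\alpha,T_1}=\zeta_{\alpha,T_1}$ because consecutive entries in the first $\tau_l$ slots lie in the same row of $T_1$, giving case (1) of Case~\ref{ai=ai1}, hence $c(j,T_1)-c(j+1,T_1)=1$, i.e.\ $\operatorname{row}(j,T_1)=\operatorname{row}(j+1,T_1)$).

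Assuming $s_i\zeta_{\alpha,T_1}=\zeta_{\alpha,T_1}$, we get $\mathcal{D}_i\zeta_{\alpha,T_1}=s_i\mathcal{D}_{i+1}\zeta_{\alpha,T_1}=s_i\big(\prod_{k=1}^{l-1}\frac{1-\kappa h(k,1)}{1-\kappa(h(k,1)-1)}\big)\theta_{i+1}^{-1}\zeta_{\widehat\alpha,T_1}$ by the inductive hypothesis. It then remains to check that $s_i\theta_{i+1}^{-1}\zeta_{\widehat\alpha,T_1}=\theta_i^{-1}\zeta_{\widehat\alpha,T_1}$. Since $\theta_{i+1}=s_1s_2\cdots s_i$ and $\theta_i=s_1\cdots s_{i-1}$, we have $\theta_{i+1}^{-1}=s_i\theta_i^{-1}$, so $s_i\theta_{i+1}^{-1}=s_i s_i\theta_i^{-1}=\theta_i^{-1}$ as operators, giving the claim immediately. (If instead $s_i\zeta_{\alpha,T_1}=-\zeta_{\alpha,T_1}$ the sign would be absorbed similarly, but this should not occur for $T_1$.)

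The main obstacle is the careful combinatorial verification that for $1\le i<\tau_l$ the pair of cells occupied by the entries $r_\alpha(i)$ and $r_\alpha(i)+1$ in $T_1$ lies in the same row (so that Case~\ref{ai=ai1}(1) applies and $s_i$ fixes $\zeta_{\alpha,T_1}$), together with tracking that the product constant does not change across the induction. This requires pinning down $r_\alpha$ for $\alpha=(1^{\tau_l},0^{N-\tau_l})$: we have $r_\alpha(i)=i$ for $1\le i\le\tau_l$ since these $\alpha_i$ are the largest and appear in the first $\tau_l$ positions, so $r_\alpha(i)=i$ and the relevant cells of $T_1$ hold the small entries — actually, since $T_1$ has $N,N-1,\ldots$ filled row-by-row, the entries $1,2,\ldots,\tau_l$ (corresponding to $r_\alpha(i)=i$ being large indices... wait, $r_\alpha(i)=i$ are small) occupy the \emph{last} row, which has length $\tau_l$, so indeed $\operatorname{row}(i,T_1)=\operatorname{row}(i+1,T_1)=l$ for $1\le i<\tau_l$. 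Once this is nailed down the rest is the short operator identity above, so the proof is essentially a one-line induction after the bookkeeping.
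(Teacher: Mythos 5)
Your proposal is correct and is essentially the paper's own argument: a one-step induction anchored at $i=\tau_{l}$ (Theorem~\ref{rowsthm}), using that $s_{i}\zeta_{\alpha,T_{1}}=\zeta_{\alpha,T_{1}}$ for $1\leq i\leq\tau_{l}-1$ by Case~\ref{ai=ai1}(1) (the entries $1,\ldots,\tau_{l}$ fill the bottom row of $T_{1}$), together with the commutation $s_{i}\mathcal{D}_{i+1}=\mathcal{D}_{i}s_{i}$ and the identity $s_{i}\theta_{i+1}^{-1}=\theta_{i}^{-1}$. The hedged detour about a ``column-descent'' is harmless, since you settle the combinatorial point correctly at the end.
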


\begin{proof}By Case \ref{ai=ai1}(1) $s_{i} \zeta_{\alpha,T_{1}} =\zeta_{\alpha,T_{1}}$ for $1 \leq i \leq\tau_{l} -1$. Arguing inductively assume the stated formula (known true for $i =\tau_{l}$) and apply $s_{i -1}$ to both sides, then $s_{i-1} \mathcal{D}_{i} \zeta_{\alpha,T_{1}} =\mathcal{D}_{i -1} s_{i} \zeta_{\alpha,T_{1}}$ and $s_{i -1} \theta_{i}^{ -1} =s_{i -1} ( s_{i} \cdots s_{1} ) =\theta_{i -1}^{ -1}$.
\end{proof}

\begin{Theorem}\label{T1sing}$\zeta_{\alpha,T_{1}}$ is singular for $\kappa=\frac{1}{h_{\tau}}$.
\end{Theorem}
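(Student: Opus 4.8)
The plan is to show that at $\kappa = \tfrac{1}{h_\tau}$ all Dunkl operators annihilate $\zeta_{\alpha,T_1}$, where $\alpha = (1^{\tau_l}, 0^{N-\tau_l})$ with $l = \ell(\tau)$, by combining the explicit formula of Theorem~\ref{rowsthm} with Corollary~\ref{dz=zero}. First I would observe that since $\ell(\alpha) = \tau_l < N$, Corollary~\ref{dz=zero} immediately gives $\mathcal{D}_i \zeta_{\alpha,T_1} = 0$ for all $i > \tau_l$. Thus the only operators that could fail to annihilate $\zeta_{\alpha,T_1}$ are $\mathcal{D}_1, \dots, \mathcal{D}_{\tau_l}$, and by Lemma~\ref{Dzi<m} these are all equal (up to the common scalar $\prod_{i=1}^{l-1}\frac{1-\kappa h(i,1)}{1-\kappa(h(i,1)-1)}$ and an application of $\theta_i^{-1}$) to a single nonzero NSJP $\zeta_{\widehat\alpha, T_1}$. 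Hence it suffices to show that this scalar vanishes at $\kappa = \tfrac{1}{h_\tau}$.

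The key point is that the numerator of the scalar contains the factor $1 - \kappa h(1,1)$, corresponding to $i = 1$ in the product. Since $h(1,1) = h_\tau$ is by definition the maximum hook-length, at $\kappa = \tfrac{1}{h_\tau}$ this factor is exactly $1 - \tfrac{h_\tau}{h_\tau} = 0$. One must check that the denominator $\prod_{i=1}^{l-1}(1 - \kappa(h(i,1)-1))$ does not simultaneously vanish: this requires $\kappa \ne \tfrac{1}{h(i,1)-1}$ for $1 \le i \le l-1$, i.e.\ $h_\tau \ne h(i,1) - 1$ for those $i$. Since $h(i,1) \le h_\tau$ for all $i$ and in fact $h(i,1) < h_\tau$ for $i \ge 2$ (removing the first row strictly decreases the arm-plus-leg of the corner, as $\tau_i \le \tau_1$ and the leg shrinks), we have $h(i,1) - 1 < h_\tau$, so the denominator is nonzero at $\kappa = \tfrac{1}{h_\tau}$. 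For $i = 1$ the denominator factor is $1 - \kappa(h_\tau - 1)$, which at $\kappa = \tfrac1{h_\tau}$ equals $\tfrac1{h_\tau} \ne 0$. Therefore the scalar is a well-defined expression that equals $0$ at $\kappa = \tfrac1{h_\tau}$.

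Putting this together: at $\kappa = \tfrac1{h_\tau}$ we get $\mathcal{D}_i \zeta_{\alpha,T_1} = 0$ for $1 \le i \le \tau_l$ from Lemma~\ref{Dzi<m} and the vanishing of the scalar, and $\mathcal{D}_i \zeta_{\alpha,T_1} = 0$ for $\tau_l < i \le N$ from Corollary~\ref{dz=zero}. Since $\zeta_{\alpha,T_1}$ is homogeneous of degree $|\alpha| = \tau_l \ge 1$, it is a nonconstant polynomial annihilated by every Dunkl operator, hence singular for $\kappa = \tfrac1{h_\tau}$. One should also note that $\zeta_{\alpha,T_1}$ is genuinely defined and nonzero at this value of $\kappa$: since $0 < \tfrac1{h_\tau} < \tfrac1{h_\tau - 1}$, the parameter avoids all the poles appearing in the Yang--Baxter construction of the NSJP (the step denominators $\xi_{\alpha,T}(i) - \xi_{\alpha,T}(i+1)$ with $\alpha_i \ne \alpha_{i+1}$, bounded away from zero by the estimate preceding this section). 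The main obstacle is the bookkeeping verifying that exactly the $i=1$ numerator factor vanishes while no denominator factor does; this is where the role of $h_\tau$ as the \emph{unique} maximum hook-length of the first corner is essential.
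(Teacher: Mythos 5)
Your proposal is correct and follows essentially the same route as the paper: specialize the explicit formula from Theorem~\ref{rowsthm}/Lemma~\ref{Dzi<m}, note that the factor $1-\kappa h(1,1)$ vanishes at $\kappa=\tfrac{1}{h_{\tau}}$ while the polynomial itself has no pole there (since $\tfrac{1}{h_{\tau}}<\tfrac{1}{h_{\tau}-1}$), and invoke Corollary~\ref{dz=zero} for the operators $\mathcal{D}_{i}$ with $i>\tau_{l}$. Your extra check that no denominator factor $1-\kappa(h(i,1)-1)$ vanishes is a welcome detail the paper leaves implicit.
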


\begin{proof}There are no poles in any $\zeta_{\beta,T^{ \prime}}$ provided $-\frac{1}{h_{\tau} -1} <\kappa<\frac{1}{h_{\tau} -1}$ and the interval inclu\-des~$\frac{1}{h_{\tau}}$. The factor $1 -\kappa h ( 1 ,1 ) $ in the multiplicative constant shows that $\mathcal{D}_{i} \zeta_{\alpha,T_{1}} =0$ for $1 \leq i \leq\tau_{l}$. By Corollary~\ref{dz=zero} $\mathcal{D}_{i}\zeta_{\alpha,T_{1}} =0$ for $\tau_{l} <i \leq N$ because $\alpha_{j} =0$ for $j >\tau_{l}$.
\end{proof}

Each polynomial in $\operatorname{span} \{w\zeta_{\alpha,T_{1}} \colon w \in\mathcal{S}_{N}\} $ is also singular for $\kappa=1/h_{\tau}$. The NSJP's appearing in this way will be discussed in the sequel. For the $\kappa= -\frac{1}{h_{\tau}}$ case suppose $\tau_{1} \geq2$ and $\tau_{1} =\tau_{2} =\cdots=\tau_{m} >\tau_{m +1}$; then let $\alpha=\big( 1^{m} ,0^{N -m}\big) $ and $T =T_{0}$ the inv-maximal RSYT with $N ,N -1 ,\ldots,1$ entered column-by-column. Let $\tau^{ \prime}$ denote the transposed partition of $\tau$, then $\tau_{1}^{ \prime} =\ell(\tau)$ and $m =\tau_{\tau_{1}}^{ \prime}$.

\begin{Theorem}\label{colthm}$\mathcal{D}_{m}\zeta_{\alpha,T_{0}}=\prod\limits_{j=1}^{\tau_{1}-1}\frac{1+\kappa h( 1,j) }{1+\kappa( h(1,j) -1) }\theta_{\tau_{l}}^{-1}\zeta_{\widehat{\alpha},T_{0}}$.
\end{Theorem}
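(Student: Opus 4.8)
The plan is to carry out the column analogue of the proof of Theorem~\ref{rowsthm}: replace $T_{1}$ by $T_{0}$, rows by columns, and $\kappa$ by $-\kappa$. Since $\alpha=(1^{m},0^{N-m})$ is a partition and $\ell(\alpha)=m<N$ (from $\tau_{1}\ge 2$ one gets $N\ge m\tau_{1}\ge 2m$), Theorem~\ref{Dzetam} already gives $\mathcal{D}_{m}\zeta_{\alpha,T_{0}}=(\Vert\zeta_{\alpha,T_{0}}\Vert^{2}/\Vert\zeta_{\widehat{\alpha},T_{0}}\Vert^{2})\,\theta_{m}^{-1}\zeta_{\widehat{\alpha},T_{0}}$, so everything reduces to computing the norm ratio; for that I would invoke formula~(\ref{normprod}), whose left-hand side equals $\Vert\zeta_{\alpha,T_{0}}\Vert^{2}/\Vert\zeta_{\widehat{\alpha},T_{0}}\Vert^{2}$ by~(\ref{JPnorm}).

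The first step is to read off the contents of $T_{0}$. Entering $N,N-1,\dots,1$ column by column puts the entries $1,\dots,m$ into the last column, which is column $\tau_{1}$ and has exactly $m=\tau'_{\tau_{1}}$ boxes, with entry $i$ sitting in row $m-i+1$; hence $c(i,T_{0})=\tau_{1}-m+i-1$ for $1\le i\le m$, in particular $c(m,T_{0})=\tau_{1}-1$, so the prefactor $\alpha_{m}+\kappa c(m,T_{0})$ in~(\ref{normprod}) equals $1+\kappa(\tau_{1}-1)$. The remaining entries $m+1,\dots,N$ fill columns $1,\dots,\tau_{1}-1$, and the $\tau'_{k}$ boxes of column $k$ carry the consecutive contents $k-1,k-2,\dots,k-\tau'_{k}$. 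Consequently the product over $j=m+1,\dots,N$ in~(\ref{normprod}) splits into $\tau_{1}-1$ blocks, one for each column $k=1,\dots,\tau_{1}-1$.

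The second step is the telescoping. Put $g(c):=1+\kappa(\tau_{1}-1-c)$, so that $g(c)-\kappa=g(c+1)$, $g(c)+\kappa=g(c-1)$, and each factor $1-(\kappa/g(c))^{2}=g(c-1)g(c+1)/g(c)^{2}$. The elementary identity $\prod_{i=a}^{b}g(i-1)g(i+1)/g(i)^{2}=g(a-1)g(b+1)/(g(a)g(b))$ proved above, applied to block $k$ whose contents run from $a=k-\tau'_{k}$ to $b=k-1$, collapses that block to $g(k-\tau'_{k}-1)g(k)/(g(k-\tau'_{k})g(k-1))$. Since $h(1,k)=\tau_{1}-k+\tau'_{k}$ one has $g(k-\tau'_{k}-1)=1+\kappa h(1,k)$ and $g(k-\tau'_{k})=1+\kappa(h(1,k)-1)$, while the leftover factors telescope, $\prod_{k=1}^{\tau_{1}-1}g(k)/g(k-1)=g(\tau_{1}-1)/g(0)=1/(1+\kappa(\tau_{1}-1))$, cancelling the prefactor $1+\kappa(\tau_{1}-1)$. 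This leaves $\Vert\zeta_{\alpha,T_{0}}\Vert^{2}/\Vert\zeta_{\widehat{\alpha},T_{0}}\Vert^{2}=\prod_{j=1}^{\tau_{1}-1}(1+\kappa h(1,j))/(1+\kappa(h(1,j)-1))$, and substituting into Theorem~\ref{Dzetam} gives the claim.

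Apart from the telescoping, which is exactly as in the proof of Theorem~\ref{rowsthm}, the only point needing care — and the main obstacle — is the combinatorial bookkeeping for $T_{0}$: verifying that the entries exceeding $m$ occupy precisely columns $1,\dots,\tau_{1}-1$ and that within each such column the contents entering~(\ref{normprod}) form a block of consecutive integers, which is what makes the elementary identity applicable block by block. (Conceptually the statement is the transpose of Theorem~\ref{rowsthm}: twisting the $\mathcal{S}_{N}$-module $V_{\tau}$ by the sign character gives $V_{\tau'}$, interchanges $T_{1}$ and $T_{0}$, turns $h(i,1)$ into $h(1,i)$, and negates $\kappa$, so one could also deduce it from Theorem~\ref{rowsthm} for the transposed partition $\tau'$; but making that twist precise at the level of the Jack polynomials $\zeta_{\alpha,T}$ and the operators $\mathcal{D}_{i}$ is more work than repeating the short content count.)
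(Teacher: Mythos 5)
Your proposal is correct and takes essentially the same route as the paper: reduce via Theorem~\ref{Dzetam} to the norm ratio, evaluate it by (\ref{normprod}) using the contents of $T_{0}$, split the product over the columns (blocks of consecutive contents), and telescope with the elementary lemma. Your bookkeeping is in fact cleaner than the printed proof (whose intermediate display has sign misprints, and whose statement's $\theta_{\tau_{l}}^{-1}$ should be the $\theta_{m}^{-1}$ your computation produces), but the argument is the same.
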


\begin{proof}The first $m$ coordinates of the spectral vector of $\zeta_{\alpha,T_{0}}$ are
\begin{gather*}
( 2 +\kappa( \tau_{1} -m) ,2 +\kappa( \tau_{1} -m+1) ,\ldots,2 +\kappa( \tau_{1} -1)) .
\end{gather*}
The contents $c ( r_{\alpha} (i) ,T_{1}) $ for $\tau_{l} +1 \leq i \leq N$ make up $\tau_{1} -1$ lists of consecutive integers, one for each column, from column $\# 1$ to column $\# (\tau_{1} -1) $. Consider the part of the product in~(\ref{normprod}) corresponding to column $\# j$: the contents are $j -\tau_{i}^{ \prime} ,j -\tau_{i}^{ \prime} +1 ,\ldots,j -1$ and by the Lemma this column contributes
\begin{gather*}
\frac{( 1 +\kappa( \tau_{1} -1 -( j -1) +1)) ( 1 +\kappa( \tau_{1} -1 -( j -\tau_{i}^{ \prime}) -1) ) }{1 +\kappa( \tau_{1} -1 -( j-1) ) ( 1 +\kappa( \tau_{1} -1 -( j -\tau_{i}^{ \prime}) ) ) }\\
\qquad{} =\frac{( 1 +\kappa( \tau_{1} -j +1) ) ( 1+\kappa h ( 1 ,j) ) }{( 1 +\kappa( \tau_{1}-j) ) ( 1 +\kappa( h ( 1 ,j) -1)) }
\end{gather*}
to the product because the hook-length $h ( 1 ,j) =\tau_{1}+\tau_{j}^{ \prime} -j$. Thus
\begin{gather*}
\frac{\Vert \zeta_{\alpha,T_{1}}\Vert ^{2}}{\Vert\zeta_{\widehat{\alpha} ,T_{1}}\Vert ^{2}} =( 1 +\kappa(\tau_{1} -1) ) \prod_{j =1}^{\tau_{1} -1}\frac{( 1+\kappa( \tau_{1} -j +1) ) ( 1 +\kappa h ( 1,j) ) }{( 1 +\kappa( \tau_{1} -j) )( 1 +\kappa( h ( 1 ,j) -1) ) }\\
\hphantom{\frac{\Vert \zeta_{\alpha,T_{1}}\Vert ^{2}}{\Vert\zeta_{\widehat{\alpha} ,T_{1}}\Vert ^{2}}}{} =\prod_{j =1}^{\tau_{1} -1}\frac{1 +\kappa h ( 1 ,j) }{1+\kappa( h ( 1 ,j) -1) },
\end{gather*}
the other factors telescope.
\end{proof}

\begin{Lemma}Suppose $1\leq i\leq m$ then $\mathcal{D}_{i}\zeta_{\alpha,T_{0}}=(-1) ^{m-i}\prod\limits_{j=1}^{\tau_{1}-1}\frac{1+\kappa h(
1,j) }{1+\kappa( h( 1,j) -1) }\theta_{i}^{-1}\zeta_{\widehat{\alpha},T_{1}}$.
\end{Lemma}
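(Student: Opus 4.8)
The plan is to obtain this from Theorem~\ref{colthm} by downward induction on $i$, in the same way Lemma~\ref{Dzi<m} was obtained from Theorem~\ref{rowsthm}. The base case $i=m$ is Theorem~\ref{colthm} (here $(-1)^{m-m}=1$ and the cyclic factor is $\theta_m^{-1}$), so it remains to pass from level $i$ to level $i-1$ for $2\leq i\leq m$, and the device for this is the action of $s_{i-1}$.

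First I would pin down how $s_i$ acts on $\zeta_{\alpha,T_0}$ for $1\leq i\leq m-1$. Since $T_0$ is filled column by column with $N,N-1,\ldots,1$ and, by hypothesis, $\tau_1=\cdots=\tau_m>\tau_{m+1}$, the last column (column~$\tau_1$) has height $m$ and, being filled last, receives the smallest values $m,m-1,\ldots,1$ from top to bottom; hence $\operatorname{row}(i,T_0)=m+1-i$ and $\operatorname{col}(i,T_0)=\tau_1$ for $1\leq i\leq m$. Therefore, for $1\leq i\leq m-1$ we have $\alpha_i=\alpha_{i+1}=1$, the rank index in Case~\ref{ai=ai1} is $r_\alpha(i)=i$, and $\operatorname{col}(i,T_0)=\operatorname{col}(i+1,T_0)$, so subcase~(2) of Case~\ref{ai=ai1} applies and $s_i\zeta_{\alpha,T_0}=-\zeta_{\alpha,T_0}$.

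For the inductive step, assuming the stated identity at level $i$, I would apply $s_{i-1}$ to both sides. On the left, the relation $w\mathcal{D}_k=\mathcal{D}_{w(k)}w$ with $w=s_{i-1}$ gives $s_{i-1}\mathcal{D}_i\zeta_{\alpha,T_0}=\mathcal{D}_{i-1}s_{i-1}\zeta_{\alpha,T_0}=-\mathcal{D}_{i-1}\zeta_{\alpha,T_0}$ by the previous paragraph. On the right, $\theta_i^{-1}=s_{i-1}s_{i-2}\cdots s_1$ gives $s_{i-1}\theta_i^{-1}=\theta_{i-1}^{-1}$, while the scalar $\prod_{j=1}^{\tau_1-1}\frac{1+\kappa h(1,j)}{1+\kappa(h(1,j)-1)}$ and the seed $\zeta_{\widehat\alpha,T_0}$ are not touched. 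Solving for $\mathcal{D}_{i-1}\zeta_{\alpha,T_0}$ introduces one further sign, so $(-1)^{m-i}\theta_i^{-1}$ is replaced by $(-1)^{m-(i-1)}\theta_{i-1}^{-1}$, which is exactly the identity at level $i-1$. Since none of the moves $s_1,\ldots,s_{m-1}$ used in the descent act on the seed, the tableau on the right-hand side stays the same throughout, namely the one appearing in Theorem~\ref{colthm}.

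I do not anticipate a real obstacle here: this is the same routine descent already performed in the row case. The two points that need care are the combinatorial fact that the entries $1,\ldots,m$ fill a single column of $T_0$ — so that at each step it is subcase~(2) of Case~\ref{ai=ai1}, contributing the sign $-1$, rather than subcase~(1) or~(3), that applies — and keeping track of the accumulated sign $(-1)^{m-i}$.
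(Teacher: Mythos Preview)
Your argument is correct and is exactly the approach the paper takes: invoke Case~\ref{ai=ai1}(2) to get $s_i\zeta_{\alpha,T_0}=-\zeta_{\alpha,T_0}$ for $1\le i\le m-1$, then descend from Theorem~\ref{colthm} just as Lemma~\ref{Dzi<m} descends from Theorem~\ref{rowsthm}. You have also (rightly) kept the tableau on the right as $T_0$ throughout, matching Theorem~\ref{colthm}; the ``$T_1$'' in the stated lemma (and the subscript ``$\tau_l$'' on $\theta$ in Theorem~\ref{colthm}) are typos in the paper.
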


\begin{proof}By Case \ref{ai=ai1}(2) $s_{i} \zeta_{\alpha,T_{0}} = -\zeta_{\alpha,T_{0}}$ for $1 \leq i \leq m -1$. The rest of the argument is the same as in Lemma~\ref{Dzi<m}.
\end{proof}

The proof of the following is essentially the same as that of Theorem~\ref{T1sing}.

\begin{Theorem}$\zeta_{\alpha,T_{0}}$ is singular for $\kappa= -\frac{1}{h_{\tau}}$.
\end{Theorem}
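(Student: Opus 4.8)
The plan is to repeat the proof of Theorem~\ref{T1sing} verbatim, with $T_{1}$ replaced by $T_{0}$, $\tau_{l}$ by $m$, and the row hook-lengths $h(i,1)$ by the column hook-lengths $h(1,j)$. First I would check that $\kappa=-\frac{1}{h_{\tau}}$ is admissible. The hypothesis $\tau_{1}\geq 2$ gives $h_{\tau}=\tau_{1}+\ell(\tau)-1\geq 2$, so $-\frac{1}{h_{\tau}}$ lies in the open interval $\left(-\frac{1}{h_{\tau}-1},\frac{1}{h_{\tau}-1}\right)$; by the bound $\vert c(r_{\alpha}(i),T)-c(r_{\alpha}(i+1),T)\vert\leq h_{\tau}-1$ established earlier, none of the denominators $\xi_{\beta,T'}(i)-\xi_{\beta,T'}(i+1)$ occurring in the Yang--Baxter construction can vanish for such $\kappa$. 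Hence every NSJP, in particular $\zeta_{\alpha,T_{0}}$ and $\zeta_{\widehat{\alpha},T_{0}}$, is regular at $\kappa=-\frac{1}{h_{\tau}}$, and the identity of Theorem~\ref{colthm} is valid there.

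Next I would read off the vanishing from the explicit scalar constant. In the product $\prod_{j=1}^{\tau_{1}-1}\frac{1+\kappa h(1,j)}{1+\kappa(h(1,j)-1)}$ of Theorem~\ref{colthm} every denominator is $1+\kappa(h(1,j)-1)$ with $h(1,j)-1\leq h(1,1)-1=h_{\tau}-1<h_{\tau}$, so all denominators stay nonzero at $\kappa=-\frac{1}{h_{\tau}}$; on the other hand the $j=1$ numerator is $1+\kappa h(1,1)=1+\kappa h_{\tau}$, which vanishes there. Thus $\mathcal{D}_{m}\zeta_{\alpha,T_{0}}=0$ at $\kappa=-\frac{1}{h_{\tau}}$, and by the Lemma immediately preceding this theorem (the one computing $\mathcal{D}_{i}\zeta_{\alpha,T_{0}}$ for $1\leq i\leq m$, which carries the same scalar constant up to a sign $(-1)^{m-i}$) one gets $\mathcal{D}_{i}\zeta_{\alpha,T_{0}}=0$ for every $1\leq i\leq m$.

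Finally I would handle the coordinates $m<i\leq N$: since $\alpha=\big(1^{m},0^{N-m}\big)$ has $\ell(\alpha)=m<N$, Corollary~\ref{dz=zero} gives $\mathcal{D}_{i}\zeta_{\alpha,T_{0}}=0$ for $m<i\leq N$ at every admissible $\kappa$, in particular at $\kappa=-\frac{1}{h_{\tau}}$. Combining the two ranges yields $\mathcal{D}_{i}\zeta_{\alpha,T_{0}}=0$ for all $1\leq i\leq N$, i.e., $\zeta_{\alpha,T_{0}}$ is singular for $\kappa=-\frac{1}{h_{\tau}}$.

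The one genuinely delicate point --- exactly as in Theorem~\ref{T1sing} --- is the admissibility step: one must be sure that neither the NSJPs appearing in the differentiation formula nor the denominators $1+\kappa(h(1,j)-1)$ in the explicit constant degenerate at the chosen rational $\kappa$. Both reduce to the single inequality $h(1,j)\leq h_{\tau}$ for $j$ in the first row, together with the pole bound for the Yang--Baxter steps, so no computation beyond Theorem~\ref{colthm} and its preceding Lemma is needed.
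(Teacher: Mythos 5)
Your proposal is correct and is essentially the paper's own argument: the paper simply remarks that the proof is the same as that of Theorem~\ref{T1sing}, namely that $\kappa=-\frac{1}{h_{\tau}}$ lies in the pole-free interval $\big({-}\frac{1}{h_{\tau}-1},\frac{1}{h_{\tau}-1}\big)$, the factor $1+\kappa h(1,1)=1+\kappa h_{\tau}$ in the constant of Theorem~\ref{colthm} (and the preceding Lemma) kills $\mathcal{D}_{i}\zeta_{\alpha,T_{0}}$ for $1\leq i\leq m$, and Corollary~\ref{dz=zero} handles $m<i\leq N$. Your extra check that the denominators $1+\kappa(h(1,j)-1)$ stay nonzero is a welcome, if routine, elaboration of the same reasoning.
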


As mentioned before the polynomials in $\operatorname{span} \{ w\zeta_{\alpha,T_{0}} \colon w \in\mathcal{S}_{N} \} $ are singular for the same $\kappa$.
\begin{Example} Let $\tau= ( 4 ,4 ,3 ,1 ) $ then $h_{\tau} =7$ and
\begin{gather*}
T_{0} =\left[
\begin{matrix}
12 & 8 & 5 & 2\\
11 & 7 & 4 & 1\\
10 & 6 & 3 & \\
9 & & &
\end{matrix} \right] ,\qquad T_{1} =\left[
\begin{matrix}
12 & 11 & 10 & 9\\
8 & 7 & 6 & 5\\
4 & 3 & 2 & \\
1 & & &
\end{matrix}
\right] \\
\alpha =\big( 1 ,1 ,0^{10}\big) ,\qquad \widehat{\alpha} =\big( 0 ,1,0^{10}\big) ,\beta=\big( 1 ,0^{11}\big) , \qquad \widehat{\beta} =\big(
0^{12}\big) ,\\
\mathcal{D}_{2} \zeta_{\alpha,T_{0}} =\frac{( 1 +7 \kappa)( 1 +5 \kappa) }{( 1 +6 \kappa) ( 1 +3\kappa) } s_{1} \zeta_{\widehat{\alpha} ,T_{0}} , \qquad \mathcal{D}_{1} \zeta_{\beta,T_{1}} =\frac{( 1 -7 \kappa) ( 1 -4\kappa) }{( 1 -5 \kappa) ( 1 -3 \kappa) }( 1 \otimes T_{1}) .
\end{gather*}
Thus $\zeta_{\alpha,T_{0}}$, $\zeta_{\beta,T_{1}}$ are singular for $\kappa= -\frac{1}{7}$, $\kappa=\frac{1}{7}$ respectively.
\end{Example}

The results of Section~\ref{ExtP} specialize to the symmetric groups $\mathcal{S}_{N}$, of type $A_{N -1}$, as follows: let $V:=\Big\{ x\in\mathbb{R}^{N} \colon \sum\limits_{i =1}^{N}x_{i} =0\Big\} $, the space on which the reflection representation acts irreducibly, then $\tau_{m}$ on $\wedge ^{m}(V) $ is isomorphic to the representation labeled by $\tau^{\prime}:=\big( N -m ,1^{m}\big) $ whose content sum $\varepsilon ( \tau^{ \prime}) =N \big( \frac{N -1}{2} -m\big) $. The singular polynomials for $\pm1/h_{t^{ \prime}} = \pm1/N$ are of degree one.

\subsection{The isotype of a space of singular polynomials}

The subspace of $\mathcal{P}_{\tau}$ of polynomials homogeneous of degree $n$ can be completely decomposed into subspaces irreducible and invariant under the action of $\mathcal{S}_{N}$, and these subspaces have bases of $ \{\omega_{i}\} $-simultaneous eigenvectors. Suppose $\lambda$ is a~partition of $N$ then a basis $\{ p_{S}\colon S\in\mathcal{Y} ( \lambda) \} $ (of an $\mathcal{S}_{N}$-invariant subspace) is called \textit{a basis of isotype} $\lambda$ if each $p_{S}$ transforms under the action of~$s_{i}$ according to Case~\ref{ai=ai1}. The key point here is when does a subspace have a basis of isotype~$\lambda$ made up of NSJP's. The transformation coefficients can be written as
\begin{gather*}
\frac{\kappa}{\xi_{\alpha,T}(i) -\xi_{\alpha,T} (i+1) }=\frac{1}{\big( \frac{\alpha_{i}}{\kappa}+c ( r_{\alpha}(i) ,T) \big) -\big( \frac{\alpha_{i+1}}{\kappa}+c( r_{\alpha}(i+1) ,T) \big) }.
\end{gather*}
So if $\zeta_{\alpha,T}=p_{S}$, that is, $\mathcal{U}_{i}\zeta_{\alpha,T}=( \alpha_{i}+1+\kappa c( r_{\alpha}(i),T)) \zeta_{\alpha,T}= (1+\kappa\omega_{i})p_{S}= (1+\kappa c( i,S)) p_{S}$ and the equivalence is $\frac{\alpha_{i}}{\kappa}+c ( r_{\alpha}(i) ,T ) =c ( i,S ) $ (note $T$ and $S$ have different shapes). (Some aspects of the argument are omitted; this discussion is meant as illustration). Consider $\zeta_{\alpha,T_{0}}$ (as in Theorem~\ref{colthm}) with $\kappa=-\frac{1}{h_{\tau}}$. Apply the transformation
\begin{gather}
\alpha_{i}+1+\kappa c ( r_{\alpha}(i) ,T )\mapsto\frac{\alpha_{i}}{\kappa}+c( r_{\alpha}(i),T) =c( i,S) \label{tfct}
\end{gather}
to $\xi_{\alpha,T}$. The result is (with $m=\tau_{\tau_{1}}^{\prime}$, the length of the last column of $\tau$)
\begin{gather*}
 \big({-}h_{\tau}+\tau_{1}-m,-h_{\tau}+\tau_{1}-m+1,\ldots,-h_{\tau}+\tau_{1}-1,[ c( i,T_{0}) ] _{i=m+1}^{N}\big) \\
\qquad{} =\big( 1-\tau_{1}^{\prime}-m,2-\tau_{1}^{\prime}-m,\ldots,-\tau_{1}^{\prime}, [ c ( i,T_{0} ) ] _{i=m+1}^{N}\big).
\end{gather*}
This is the content vector of the RSYT obtained from $T_{0}$ by removing the last column of $T_{0}$ and attaching it to the bottom of the first column. The $\mathcal{S}_{N}$-invariant subspace spanned by the orbit of $\zeta_{\alpha,T_{0}}$ is of isotype $\lambda$ where $\lambda^{\prime}= \big(\tau_{1}^{\prime}+\tau_{\tau_{1}}^{\prime},\tau_{2}^{\prime},\ldots,\tau_{\tau_{1}-1}^{\prime}\big) $ (another proof in \cite[Theorem~5.9]{Etingof/Stoica2009}).

The analogous computation for $\zeta_{\alpha,T_{1}}$ and $\kappa=\frac{1}{h_{\tau}}$ gives the content vector (recall $l =\ell(\tau)$)
\begin{gather*}
 \big( h_{\tau} -l +\tau_{l} ,h_{\tau} -l +\tau_{l} -1 ,\ldots,h_{\tau} -l+1 ,[ c( i ,T_{1})] _{i =\tau_{l} +1}^{N}\big)\\
\qquad{} =\big( \tau_{1} +\tau_{l} -1 ,\tau_{1} +\tau_{l} -2 ,\ldots,\tau_{1},[ c ( i ,T_{1})] _{i =\tau_{l} +1}^{N}\big).
\end{gather*}
This is the content vector of the RSYT obtained from $T_{1}$ by removing the last row of $T_{0}$ and attaching it to the right end of the first row. The $\mathcal{S}_{N}$-invariant subspace spanned by the orbit of $\zeta_{\alpha,T_{0}}$ is of isotype $\lambda= ( \tau_{1} +\tau_{l} ,\tau_{2} ,\ldots,\tau_{l -1} ) $. In the trivial cases $\tau=(N) $ one has $\kappa= -\frac{1}{N}$ with $\lambda=( N -1 ,1) $, and for $\tau=\big( 1^{N}\big) $ one has $\kappa=\frac{1}{N}$ and $\lambda=\big( 2 ,1^{N -1}\big) $.

By this argument and the definition of isotype for each RSYT $S$ of shape $\lambda$ there is a NSJP in $\operatorname{span} \{w\zeta_{\alpha,T_{1}}\colon w\in\mathcal{S}_{N}\} $ whose spectral vector with $\kappa=1/h_{\tau}$ transforms (by~(\ref{tfct})) to the content vector of~$S$. This polynomial is $\zeta_{\beta,T}$ where $\beta_{i}=1$ if $\operatorname{row} ( i,S ) =1$ and $\tau_{1}+1\leq \operatorname{col} ( i,S ) \leq\tau_{1}+\tau_{l}$ and $\beta _{i}=0$ otherwise; (for a tableau $T^{\prime}$ let $T^{\prime}[i,j] $ denote the entry at row $\#i$ and column $\#j$) then
\begin{gather*}
T [ 1,j] =S [ 1,j ] ,\qquad 1\leq j\leq\tau_{1},\\
T [ i,j ] =S [ i,j ] +\# \{ k\colon k>\tau _{1},\,S [ 1,k ] >S [ i,j ] \} ,\qquad 2\leq i<l, \qquad 1\leq j\leq\tau_{i},\\
T[ l,j] =\tau_{l}+1-j,\qquad 1\leq j\leq\tau_{l}.
\end{gather*}
An analogous formula holds for the $\zeta_{\alpha,T_{0}}$ and $\kappa=-1/h_{\tau}$ situation.

\begin{Example}
Here is an example using $\tau=( 3,3,3,3) $ and $N=12$, $\kappa=\frac{1}{6}$ and $\lambda=(6,3,3) $. Let
\begin{gather*}
S=\left[
\begin{matrix}
12 & 10 & 9 & 8 & 7 & 5\\
11 & 6 & 2 & & & \\
4 & 3 & 1 & & &
\end{matrix}
\right]
\end{gather*}
then $\beta=( 0,0,0,0,1,0,1,1,0,0,0,0) $ and
\begin{gather*}
T=\left[
\begin{matrix}
12 & 10 & 9\\
11 & 8 & 5\\
7 & 6 & 4\\
3 & 2 & 1
\end{matrix}
\right].
\end{gather*}
\end{Example}

\begin{Remark}We did not give any details about the indecomposable finite reflection groups with two conjugacy classes of reflections, the types $B_{N}$ and $F_{4}$. Griffeth developed the theory of vector-valued Jack polynomials for the complex reflection groups $G(n,p,N)$. The analogous result for $B_{N}$ of ours on type-$A$ singular Jack polynomials with minimal singular value can be derived from Griffeth's formulae~\cite{Griffeth2018} specialized to~$G(2,1,N)$, the hyperoctahedral group~$B_{N}$. This should show that the region of positivity of the Gaussian form is a neighborhood of the origin in $\mathbb{R}^{2}$ bounded by straight lines corresponding to singular values.
\end{Remark}

\pdfbookmark[1]{References}{ref}
\LastPageEnding

\end{document}